\newcommand{\nm}[1]{\left\|#1\right\|}
\def\jump#1{\llbracket #1 \rrbracket }
\newcommand{\mykappa}{\kappa}
\newcommand{\ip}[2]{\left(#1,#2\right)}
\newcommand{\ipi}[2]{\left(#1,#2\right)_i}
\newcommand{\ipjump}[2]{\left[#1,#2\right]}
\newcommand{\sv}[2]{\mathcal{H}^*\left(#1,#2\right)}
\newcommand{\svtd}{\mathbb{D}^*}
\newcommand{\svtde}{\left( \mathbb{D}^* \right)}
\newcommand{\cF}{\mathcal{F}}
\newcommand{\cZ}{\mathcal{Z}}
\newcommand{\quand}{\quad\text{and}\quad}
\newcommand{\hf}{\frac{1}{2}}
\crefname{hypothesis}{Hypothesis}{Hypotheses}
\title{Spectral Volume from a DG perspective: Oscillation Elimination, Stability, and Optimal Error Estimates 
	\thanks{
			The authors were partially supported by Shenzhen Science and Technology Program (No.~RCJC20221008092757098) and 
			National Natural Science Foundation of China (No.~12171227).}}
\author{Zhuoyun Li\thanks{Department of Mathematics, Southern University of Science and Technology, Shenzhen, Guangdong 518055, China (\email{12231266@mail.sustech.edu.cn}).}
	\and Kailiang Wu\thanks{Corresponding author. Department of Mathematics and Shenzhen International Center for Mathematics, Southern University of Science and Technology, Shenzhen, Guangdong 518055, China (\email{wukl@sustech.edu.cn}).}} 
\newcommand{\nn}{{n_{**}}}
\begin{document}

\maketitle

\begin{abstract}
The discontinuous Galerkin (DG) method and the spectral volume (SV) method are two widely-used numerical methodologies for solving hyperbolic conservation laws. In this paper, we demonstrate that under specific subdivision assumptions, the SV method can be represented in a DG form with a different inner product. Building on this  insight, we extend the oscillation-eliminating (OE) technique, recently proposed in [M.~Peng, Z.~Sun, and K.~Wu, {\em Mathematics of Computation}, in press, {\tt https://doi.org/10.1090/mcom/3998}], to develop a new fully-discrete OESV method. The OE technique is non-intrusive, efficient, and straightforward to implement, acting as a simple post-processing filter to effectively suppress spurious oscillations. 
From a DG perspective, we present a comprehensive framework to theoretically analyze the stability and accuracy of both general Runge-Kutta SV (RKSV) schemes and the novel OESV method. For the linear advection equation, we conduct an energy analysis of the fully-discrete RKSV method, identifying an upwind condition crucial for stability. Furthermore, we establish optimal error estimates for the OESV schemes, overcoming nonlinear challenges through error decomposition and treating the OE procedure as additional source terms in the RKSV schemes. 
Extensive numerical experiments validate our theoretical findings and demonstrate the effectiveness and robustness of the proposed OESV method. This work enhances the theoretical understanding and practical application of SV schemes for hyperbolic conservation laws, making the OESV method a promising approach for high-resolution simulations.

\end{abstract}

\begin{keywords}
Hyperbolic conservation laws, spectral volume method, oscillation control, damping technique, energy analysis, optimal error estimates
\end{keywords}

\begin{MSCcodes}
65M60, 65M12, 35L65
\end{MSCcodes}

\section{Introduction}

Hyperbolic conservation laws are a system of hyperbolic partial differential equations that model conservation principles in continuum physics. 
Due to the complexity of obtaining analytical solutions, numerical simulation has become a crucial tool for studying nonlinear hyperbolic conservation laws.  
 It is well-known that solutions to these nonlinear equations, even with smooth initial and boundary conditions, can develop discontinuities in a finite time. 
Without appropriate treatments, high-order numerical schemes can produce spurious oscillations near these discontinuities, leading to numerical instability and potentially causing the computational codes to blow up. This presents significant challenges in the simulation of hyperbolic conservation laws. Over the past decades, various effective numerical methods have been developed, including the discontinuous Galerkin (DG) method and the spectral volume (SV) method.

The DG method is a class of finite element methods first introduced in  \cite{Reed1973TriangularMM}. Unlike traditional finite element methods, the DG method seeks numerical approximations in discontinuous piecewise polynomial spaces. Due to the pioneering works of Cockburn and Shu \cite{Cockburn1988TheRL,Cockburn1989TVBRL,COCKBURN198990,Cockburn1990TheRL,COCKBURN1998199}, the Runge--Kutta DG (RKDG) method, which couples the DG method with RK time discretization, has become one of the most popular approaches for solving hyperbolic conservation laws. 
The mathematical theory of both the semi-discrete and fully-discrete DG method has been extensively investigated. This includes studies on $L^2$-stability \cite{article5,Zhang2010StabilityAA,Xu2019TheLS,Sun2018StrongSO}, error estimates \cite{article1,article8-1,Xu2020ErrorEO,article2}, and superconvergence analysis \cite{article9,article10,Xu2020SuperconvergenceAO,article3}. 
To mitigate spurious oscillations in the DG method, several techniques have been developed. One effective approach is the application of limiters \cite{article4,article5,Zhong2013ASW}. Another approach involves adding artificial diffusion terms to the weak formulations \cite{ZINGAN2013479,article6,CiCP-27-1309,article7}. 
Recently, Lu, Liu, and Shu \cite{Liu2021AnOF,Liu2022AnEO} introduced the oscillation-free DG (OFDG) method, which controls oscillations by incorporating artificial damping terms into the semi-discrete DG formulation. Motivated by the damping technique of OFDG method, Peng, Sun, and Wu developed the oscillation-eliminating DG (OEDG) method \cite{Peng2023OEDGOD}, which integrates a non-intrusive scale-invariant  oscillation-eliminating  (OE) procedure after each RK stage to eliminate spurious oscillations.

Similar to the DG method, the SV method also employs discontinuous piecewise polynomials as its solution space. Because of using piecewise constant functions as the test functions, the SV method can be considered as a Petrov--Galerkin method. The discretization of the SV method ensures that the conservation laws are satisfied at the sub-element level, allowing it to potentially  achieve higher resolution near discontinuities compared to the DG method \cite{Sun2004EvaluationOD}. Initially proposed by Wang and Liu \cite{WANG2002210,WANG2002665,Wang2004SpectralV}, the SV method has been successfully applied to various physical systems, such as the Euler equations \cite{Wang2004SpectralV,LIU2017120}, the shallow water equations \cite{ASpectralFiniteVolumeMethodfortheShallowWaterEquations,article8}, the Navier--Stokes equations \cite{Sun2006SpectralV,Haga2009RANSSU}, and the Maxwell equations \cite{LIU2006454}. 
Compared to the DG method, far fewer studies have focused on the mathematical theory of the SV method. Most of the theoretical analyses available pertain only to the semi-discrete SV method. For instance, in \cite{ZHANG2005581}, Zhang and Shu discussed the stability of first, second, and third-order semi-discrete SV schemes using Fourier-type analysis. Van den Abeele et al.~explored the wave propagation properties of the semi-discrete SV method for hyperbolic equations in \cite{VANDENABEELE2007616,VANDENABEELE20071007} and later applied the matrix method to investigate the stability of second and third-order semi-discrete SV schemes on 3D tetrahedral grids in \cite{VANDENABEELE2009257}. 
Recently, Cao and Zou analyzed semi-discrete SV schemes based on two specific types of subdivision points for 1D linear hyperbolic conservation laws \cite{Cao2021AnalysisOS}. They developed a novel from-trial-to-test-space mapping and derived a Galerkin form of the SV method, through which they established stability, optimal convergence rates, and some superconvergence properties. They also discovered that, for linear advection equations, the DG method is equivalent to the SV method with a proper choice of subdivision points. Based  on  \cite{Cao2021AnalysisOS}, Zhang, Cao, and Pan \cite{article11} proposed a semi-discrete oscillation-free SV method for hyperbolic conservation laws. 
Further extensions of Cao and Zou's analysis to more complex equations can be found in \cite{Cao2023UnifiedAO,An2023AnyOS}. Notably, Lu, Jiang, Shu, and Zhang \cite{LuAnalysisOA} extended the analysis of the SV method to a broader class of subdivision points. 

Exploring the stability and convergence analysis of the fully-discrete SV method is important yet nontrivial. This remains largely unexplored especially for {\em nonlinear} SV schemes with an automatic oscillation control mechanism.  
To the best of our knowledge, the only theoretical analysis for the fully-discrete (linear) SV method was recently given by Wei and Zou in \cite{Wei2023AnalysisOT}. 
In this work, the authors restricted the subdivision points to either the right Radau points or the Gauss quadrature points and focused on the SV schemes coupled with only the forward Euler and the second-order strong-stability-preserving Runge-Kutta (RK) time discretizations. Wei and Zou \cite{Wei2023AnalysisOT} derived stability and optimal error estimates by examining the temporal difference terms and the error equations of the fully-discrete SV schemes. 
However, the analysis in \cite{Wei2023AnalysisOT} relies on the specific formulation of the temporal difference terms and the error equations corresponding to the aforementioned first- and second-order RK time discretizations. As a result, this technique may not be directly applicable to the general fully-discrete SV method coupled with arbitrary (higher-order) RK time discretization.

	Motivated by the Galerkin form of the SV method in \cite{Cao2021AnalysisOS}, 
	this paper establishes a closer connection between the SV method and the DG method for hyperbolic conservation laws. Building on this connection, we present a comprehensive  framework to theoretically analyze the stability and error estimates of the general RKSV schemes from a DG perspective, and propose a novel fully-discrete oscillation-eliminating SV (OESV) method. The key contributions and findings of this work are summarized as follows:
\begin{itemize}[leftmargin=*]
    \item Based on the from-trial-to-test-space operator $M^*$ proposed in \cite{Cao2021AnalysisOS}, we carefully investigate the biliniear form $\ip{\cdot}{M^*\cdot}$, which is crucial in the Galerkin form of the SV method. We discover that this bilinear form becomes an inner product $\ip{\cdot}{\cdot}_*$ on the discontinuous finite element space if and only if \cref{assump:subdivision} holds. \cref{assump:subdivision} generalizes the restriction on the subdivision points considered in the existing works \cite{Cao2021AnalysisOS,LuAnalysisOA}. Under \cref{assump:subdivision}, we derive an important DG representation of the SV method (\cref{thm:dgr-1DsemiSV}). This representation reveals a closer connection between the SV method and the DG method. Moreover, our DG representation of the SV method suggests that, by replacing the standard $L^2$-inner product with $\ip{\cdot}{\cdot}_*$, the stability and error estimates of the fully-discrete RKSV method can be analyzed within the framework of corresponding DG results.

    \item In view of the DG representation of the SV method, we propose  fully-discrete OESV schemes by extending the OE procedure designed for the RKDG method \cite{Peng2023OEDGOD} to the RKSV scheme. 
    The OESV method alternates between evolving the conventional semi-discrete SV scheme and a damping equation, whose solution operator explicitly defines the OE procedure for suppressing spurious oscillations without requiring characteristic decomposition. 
    The OE procedure is  
    scale-invariant, evolution-invariant, and effective for problems across different scales and
    wave speeds. 
    Furthermore, it is non-intrusive, efficient, and easy to implement, acting as a simple post-processing filter based on only jump information. 
    Extensive benchmark examples are tested, demonstrating the accuracy, effectiveness, and robustness of our OESV schemes.
    \item For the linear advection equation, we carry out an energy analysis for the fully-discrete RKSV method by introducing the temporal difference operator. We show that, to ensure the RKSV schemes are stable in the sense of \eqref{eq:stability-RKSV}, an upwind condition should be satisfied (\cref{thm:stability-RKSV}). Under the upwind condition, we observe that the stability results of the RKDG method can be directly extended to the RKSV schemes (\cref{rmk:DGSVsameresult}). We also prove that the upwind condition restricts the choice of the subdivision points to the zeros of a specific class of polynomials (\cref{thm:pik}).
     
    \item We establish the optimal error estimates for the fully-discrete OESV schemes (\cref{thm:err-OESV}). This proof is nontrivial since the OESV schemes are essentially nonlinear. The key idea of our analysis is to decompose the error through a piecewise polynomial interpolation $P^*$ (\cref{def:Pstar}) and skillfully treat the OE procedure as  extra source terms in the RKSV schemes (\cref{prop:OESV-xi}).
\end{itemize}

The paper is organized as follows. In section 2, we give a brief introduction to the SV and OESV method for general hyperbolic systems of conservation laws. In section 3, we specifically consider the SV method for 1D hyperbolic conservation laws and derive the DG representation. Stability analysis and the optimal error estimates of the fully-discrete RKSV and OESV schemes are established in section 4. Section 5 presents benchmark numerical examples of the OESV scheme. Concluding remarks are provided in section 6. Throughout this paper, $\Omega$ denotes the spatial domain and $I_i=[ x_{i-\frac{1}{2}}, x_{i+\frac{1}{2}} ]$ is a 1D cell. Inner product $\ip{\cdot}{\cdot}$ and $\ipi{\cdot}{\cdot}$ represents the standard $L^2$-inner product over $\Omega$ and $I_i$, respectively. $\nm{\cdot}$ represents the $L^2$-norm on $\Omega$. The following notations relating to jumps at each $x_{i+\hf}$ will also be used:
\begin{equation}\label{jump}
    \jump{v}_{i+\hf} = v\left( x_{i+\frac{1}{2}}^+ \right) - v\left( x_{i+\frac{1}{2}}^- \right),~~ \ipjump{v}{\omega} = \sum_i \jump{v}_{i+\hf}\jump{\omega}_{i+\hf},~~ \jump{v} = \sqrt{\ipjump{v}{v}}.
\end{equation}

\section{SV and OESV methods} 
In this section, we introduce the SV method and propose the novel OESV method for the general hyperbolic system of conservation laws:
\begin{equation}\label{eq:HCL}
    \begin{cases}
        {\bf u}_t + \nabla \cdot {\bf f}({\bf u}) = {\bf 0},\ \ & ({\bf x}, t) \in \Omega \times \left( 0, T \right] \\
        {\bf u}({\bf x}, 0) = {\bf u}_0({\bf x}), \ \ & {\bf x} \in \Omega,
    \end{cases}
\end{equation}
where ${\bf u} \in \mathbb R^N$, ${\bf f}=({\bf f}_1,\dots,{\bf f}_d)$, ${\bf x}=(x_1,\dots,x_d)$,  and $\Omega$ is a bounded domain in $\mathbb{R}^d$.

\subsection{Semi-discrete SV formulation}
Let $\mathcal{T}_h$ be a partition of the spatial domain $\Omega$. The discontinuous finite element space over $\mathcal{T}_h$ is defined as
\begin{equation}\label{eq:DGspace}
	{\mathbb V}^k := \left \{ v \in L^2(\Omega):~ v |_K \in [\mathbb P^k(K)]^N~ or~ [\mathbb Q^k(K)]^N\quad \forall K \in \mathcal{T}_h \right \}.
\end{equation}
Here, $\mathbb P^k(K)$ denotes the polynomial space with degree less than or equal to $k$, and $\mathbb Q^k(K)$ denotes the bi-$k$ tensor product polynomial space on element $K$. Assume that each element $K$ has a subdivision $\mathcal{T}_{K}^{*}$ satisfying
\begin{equation*}
    \bigcup_{K^* \in \mathcal{T}_{K}^{*}} K^* = K,\quad \left| \mathcal{T}_{K}^{*} \right| = \begin{cases}
\dim \mathbb P^k(K),~~ &if\ \mathbb P^k \ elements, \\
\displaystyle
\dim \mathbb Q^k(K),~~ &if\ \mathbb Q^k \ elements,
\end{cases}
\end{equation*}
where $K^* \in \mathcal{T}_{K}^{*}$ is called a {\em control volume (CV)}. The semi-discrete SV scheme for hyperbolic conservation laws \eqref{eq:HCL} seeks the numerical solution $\mathbf{u}_h(\cdot,t) \in {\mathbb V}^k$ based on the information in each CV:
\begin{equation}\label{eq:semiSV}
    \int_{K^{*}} ({\bf u}_h)_t {\rm d} {\bm x} = -\sum_{e \in \partial K^{*}} \int_{e} \widehat {\bf f}_e ({\bf u}_h) \cdot {\bf n}_e {\rm d} S \quad \forall K^{*} \in \mathcal{T}_{K}^{*},\ \forall K \in \mathcal{T}_h.
\end{equation}
In \eqref{eq:semiSV}, ${\bf n}_e=(n_e^{(1)},\dots,n_e^{(d)})$ denotes the outward unit vector at $e$ with respect to $K^{*}$, and $\widehat {\bf f}_e ({\bf u}_h)$ is a suitable numerical flux on the interface $e \in \partial K^{*}$. For $e$ in the interior of $K$, since ${\bf u}_h$ is smooth across $e$, we define $\widehat {\bf f}_e ({\bf u}_h)$ to be ${\bf f} ({\bf u}_h)$.

\subsection{Runge--Kutta SV method}
The semi-discrete scheme \eqref{eq:semiSV} can be treated as an ODE system ${\frac{\mathrm{d}}{\mathrm{d} t}} {\bf u}_h = L_{\bf f}({\bf u}_h)$, which can then be further discretized in time $t$ using, for example, some high-order accurate RK or multi-step methods. In this paper, we focus on the RKSV scheme, which is the fully-discrete SV scheme coupled with an $r$th-order $s$-stage RK method:
\begin{equation}\label{eq:RKSV}
\begin{aligned}
{\bf u}_h^{n,0} &= {\bf u}_h^{n},  \\
{\bf u}_h^{n,\ell+1} &= \sum_{0\leq \kappa \leq \ell} \left(c_{\ell \kappa}{\bf u}_h^{n,\kappa} + \tau d_{\ell \kappa} L_{\bf f}({\bf u}_h^{n,\kappa}) \right),\quad \ell =0,1,\dots,s-1, \\
{\bf u}_h^{n+1} &= {\bf u}_h^{n,s}.
\end{aligned} 
\end{equation}
Here, $\tau$ is the time step-size, ${\bf u}_h^{n}$ is the numerical solution at the $n$th time step, and the constants $\left\{ c_{\ell \kappa} \right\}$,  $\left\{ d_{\ell \kappa} \right\}$ are determined by the RK method with $\sum_{0\leq \kappa \leq \ell}c_{\ell \kappa} = 1$.

\subsection{OESV method}\label{sec:OESV}
The above RKSV scheme \eqref{eq:RKSV} works well in smooth regions but typically generates spurious oscillations near discontinuities.  
To address this issue, we propose the fully-discrete OESV method, which incorporates  
an OE procedure after each RK stage of the RKSV scheme: 
\begin{align}
{\bf u}_h^{n,0} &= {\bf u}_h^n, 
\\
\tilde{\bf u}_h^{n,\ell+1} &= \sum_{0\leq \kappa \leq \ell} \left(c_{\ell \kappa} {\bf u}_h^{n,\kappa} + \tau d_{\ell \kappa} L_{\bf f}( {\bf u}_h^{n,\kappa}) \right),\label{eq:rk}\\
{\bf u}_h^{n,\ell+1} &= \mathcal{F}_\tau \tilde{\bf u}_h^{n,\ell+1},\qquad \ell =0,1,\dots,s-1, \label{eq:OEstep}
\\
{\bf u}_h^{n+1} &= {\bf u}_h^{n,s},
\end{align}
where \eqref{eq:OEstep} represents the OE procedure  \cite{Peng2023OEDGOD}, and $\mathcal{F}_\tau \tilde{\bf u}_h^{n,\ell+1}({\bf x}) = {\bf u}({\bf x},\tau)$ with ${\bf u}({\bf x},\hat t) \in {\mathbb V}^k$ ($0 \le \hat t \le \tau$) defined as the solution of the following damping equations:
\begin{equation}\label{eq:filter}
\left\{  
\begin{aligned}
&{\frac{\mathrm{d}}{\mathrm{d} \hat t}  \int_K {\bf u}\cdot {\bf v} {\rm d}{\bm{x}}} + \sum_{m = 0}^{k}
\delta_K^{m} (\tilde{\bf u}_h^{n,\ell+1})  {\int_K( {{\bf u} - \mathbf{P}^{m-1}{\bf u}})\cdot{\bf v}{\rm d}{\bm x}} = 0, ~~\forall K, \forall {\bf v} \in {\mathbb V}^k,\\
&{\bf u}(x,0) =\; \tilde{\bf u}_h^{n,\ell+1}(x).
\end{aligned} \right.
\end{equation}
In \eqref{eq:filter}, $\hat t$ is a pseudo-time different from $t$, and $\mathbf{P}^{m}$ is the standard $L^2$-projection into ${\mathbb V}^m$ for $m\geq 0$, with $\mathbf{P}^{-1}$ set as $\mathbf{P}^{0}$. The damping coefficient $\delta_K^{m} ({\bf u}_h)$ is defined as
\begin{equation*}\label{eq:delta}
\delta_K^{m} ({\bf u}_h) = \sum_{e \in \partial K} \beta_e \frac{ \sigma_{e,K}^m({\bf u}_h) }{h_{e,K}},\qquad \sigma_{e,K}^m({\bf u}_h) = \max_{1\le i\le N} \sigma_{e,K}^m(u_h^{(i)}),
\end{equation*}
where $h_{e,K} = \sup_{\bm x \in K} {\rm dist}({\bm x},e)$,  
$\beta_e$ is the spectral radius of $\sum_{i=1}^d n_e^{(i)} \frac{\partial {\bf f}_i}{\partial {\bf u}} \big|_{{\bf u}=\overline{\bf u}_K}$ with $\overline{\bf u}_K$ being the  average of  ${\bf u}_h({\bf x})$ over $K$, $u_h^{(i)}$ denotes the $i$th component of ${\bf u}_h$, and 
\begin{equation}\label{eq:sig}
\sigma_{e,K}^m(u_h^{(i)}) = 
\begin{cases} 
0, ~~ & {\rm if}~  u_h^{(i)} \equiv \mathrm{avg}_{\Omega}(u_h^{(i)}), \\
\displaystyle
\frac{ (2m+1)h_{e,K}^m}{2(2k-1)m!}  \sum_{|{\bm \alpha }|=m} \frac{ \frac{1}{|e|} \int_{e}  \big|[[\partial^{\bm  \alpha} u_h^{(i)}]]_e \big| {\rm d}S  } { \|  u_h^{(i)} - \mathrm{avg}_{\Omega}(u_h^{(i)})  \|_{L^\infty(\Omega)}  }, ~~ & {\rm otherwise}.
\end{cases}
\end{equation}
Here, $\mathrm{avg}_{\Omega}(u_h^{(i)})$ denotes the {\bf global} average of $u_h^{(i)}({\bf x})$ over the {\bf entire} computational domain $\Omega$, and   
$|e|$ is the length or surface area of $e \in \partial K$. The multi-index vector ${\bm \alpha} = ( \alpha_1,\dots, \alpha_d)$, $\partial^{\bm  \alpha} u_h^{(i)} = \frac{ \partial^{ \alpha_1+\cdots+\alpha_d } }{\partial x_1^{ \alpha_1} \cdots \partial x_d^{ \alpha_d}} u_h^{(i)}$, and $|{\bm \alpha }|$ is defined as $\sum_{i=1}^{d}\alpha_i$ for $\mathbb P^k$ elements and 
$\max_{1\le i \le d}\alpha_i$ for $\mathbb Q^k$ elements. 
In \eqref{eq:sig}, $|[[\partial^{\bm  \alpha} u_h^{(i)}]]_e|$ denotes the absolute value of the jump of $\partial^{\bm  \alpha} u_h^{(i)}$ across interface $e$. In multidimensional cases, the integral $\int_{e}  \big|[[\partial^{\bm  \alpha} u_h^{(i)}]]_e \big| {\rm d}S$ should be approximated using a suitable quadrature, and we adopt the simple trapezoidal rule for $d=2$. In our computations, we evaluate $ \|  u_h^{(i)} - \mathrm{avg}_{\Omega}(u_h^{(i)})  \|_{L^\infty(\Omega)}$ on the Gauss quadrature nodes.

The OE procedure \eqref{eq:OEstep} is efficient and easy to implement, since the solution operator $\mathcal{F}_\tau$ to the 
damping equations \eqref{eq:filter} can be explicitly formulated as 
\begin{equation*}
	\cF_\tau \tilde{\bf u}_h^{n,\ell+1}  =  {\bf u}_K^{({\bm 0})} (0) \phi_K^{({\bm  0})}({\bm x}) + \sum_{j=1}^k {\rm e}^{-\tau \sum_{m=0}^j \delta_K^m( \tilde{\bf u}_h^{n,\ell+1} )  } \sum_{ |{\bm  \alpha}|=j } {\bf u}_K^{({\bm  \alpha})} (0) \phi_K^{({\bm  \alpha})}({\bm x}), 
\end{equation*}
where 
${
	{\bf u}_K^{({\bm  \alpha})} (0) = \int_K \tilde{\bf u}_h^{n,\ell+1} \phi_K^{({\bm  \alpha})} {\rm d}\bm{x}} / \| \phi_K^{({\bm  \alpha})} \|_{L^2(K)}^2
$, and $\{\phi_{K}^{(\bm  \alpha)}\}_{|\bm  \alpha|\le k}$ is an orthogonal basis of $\mathbb P^k(K)$. 
The OE procedure is  
scale-invariant, evolution-invariant, and thus effective for problems across various scales and
wave speeds \cite{Peng2023OEDGOD}.

\section{Understanding SV from a DG perspective}
In this section, we present a novel understanding of the SV method from a DG viewpoint. 
For the sake of convenience, we focus on the 1D scalar hyperbolic conservation law: 
\begin{equation}\label{eq:1DHCL}
u_t + f(u)_x = 0,\quad x \in \Omega = [a,b],~ t \in \left( 0, T \right]. 
\end{equation}
The extension to hyperbolic systems is straightforward. We will derive an important DG representation of the SV method. This representation will lead to a comprehensive framework for analyzing the SV method using the existing DG techniques and results. 

\subsection{Preliminary} 
This subsection discusses the Galerkin form of the SV method proposed by Cao and Zou in \cite{Cao2021AnalysisOS} and its extension to general subdivision points.  
Let $\Omega=\cup_{i}I_{i}$ be a partition of the 1D bounded spatial domain $\Omega$, consisting of a finite number of cells $I_{i}=[ x_{i-\frac{1}{2}}, x_{i+\frac{1}{2}} ]$. Assume that the partition is quasi-uniform, meaning that there exists a constant $C>0$ such that $h\leq C  h_i$ for all $i$. Here, $h_i = x_{i+\frac{1}{2}} - x_{i-\frac{1}{2}}$ denotes the mesh size of $I_i$ and $h = \max_i h_i$. The discontinuous finite element space is 
$
    \mathbb{V}^{k}=\left\{ v\in L^{2}(\Omega): v|_{I_{i}}\in \mathbb{P}^{k}(I_{i})~~ \forall i \right\}.
$ 
In this context, the conventional semi-discrete DG scheme seeks $u_h(\cdot,t) \in {\mathbb V}^k$ such that
\begin{equation}\label{eq:1DsemiDG}
    \ip{(u_h)_t}{\omega} = \sum_i \left( \ipi{f\left(u_h\right)}{\omega_x}  + \omega_{i-\frac{1}{2}}^{+}\hat{f}_{i-\frac{1}{2}}-\omega_{i+\frac{1}{2}}^{-}\hat{f}_{i+\frac{1}{2}} \right)\quad \forall \omega \in {\mathbb V}^k,
\end{equation}
where $\hat{f}_{i+\frac{1}{2}}$ is the numerical flux at $x_{i+\hf}$. Assume that each $I_{i}$ has a subdivision $I_{i} = \cup_{j=0}^{k}I_{i,j}$, where the control volume $I_{i,j}=\left[ x_{i,j}, x_{i,j+1} \right]$ with $x_{i,0}=x_{i-\frac{1}{2}}$ and $x_{i,k+1}=x_{i+\frac{1}{2}}$. If we introduce the piecewise constant space over the control volumes:
\begin{equation*}
    \mathbb{V}^{k,*}=\left\{ v^*\in L^{2}(\Omega) \ : \ v^*|_{I_{i,j}}\in \mathbb{P}^{0}(I_{i,j})\quad \forall i,j \right\},
\end{equation*}
then, by \eqref{eq:semiSV} and \cite{Cao2021AnalysisOS}, the semi-discrete SV scheme for \eqref{eq:1DHCL} can be expressed as
\begin{equation}\label{eq:1DsemiSV}
\ip{(u_{h})_t}{\omega^{*}}=\sum_{i}\sum_{j=0}^{k}\omega_{i,j}^{*}(\hat{f}_{i,j}-\hat{f}_{i,j+1})\quad \forall \omega^{*}\in \mathbb{V}^{k,*},
\end{equation}
where $\hat{f}_{i,j}$ denotes the numerical flux at $x_{i,j}$, and $\omega_{i,j}^*$ is the constant value of $\omega^*$ on $I_{i,j}$.

To obtain a Galerkin form of \eqref{eq:1DsemiSV}, Cao and Zou \cite{Cao2021AnalysisOS} introduce the following quadrature rules within each $I_i$: 
\begin{equation}\label{eq:quadrature}
    Q_i^k(v) = \sum_{j=0}^{k+1}A_{i,j}v(x_{i,j}),~ R_i^k(v)=\int_{I_i}v {\rm d} x - Q_i^k(v).
\end{equation}
Here, $A_{i,j}$ is the quadrature weight at $x_{i,j}$, and $R_i^k$ denotes the error between the numerical quadrature and the exact integral. By the theory of quadrature rules  \cite{Brass2011QuadratureTT}, there always exists a quadrature rule $Q_i^k$ such that $R_i^k(v)=0$ for any $v \in \mathbb{P}^{k-1}(I_{i})$. Hence, we assume that $Q_i^k$ is exact for all polynomials of degree $\le k-1$ in the following.  
Based on \eqref{eq:quadrature}, Cao and Zou \cite{Cao2021AnalysisOS} construct a from-trial-to-test-space mapping $M^*$ as follows. 
\begin{definition}[\cite{Cao2021AnalysisOS}]\label{def:Mstar} The from-trial-to-test-space mapping $M^*$ is defined as
    \begin{equation}\label{eq:Mstar}
        \left( M^*\omega \right) |_{I_{i}} = M_i^*\left( \omega |_{I_{i}} \right)\quad \forall i,
    \end{equation}
    where $M_i^*$ is defined recursively by 
    \begin{equation}\label{eq:Mistar}
        \begin{cases}
            & \left( M_i^{*}v \right)|_{I_{i,0}}=v_{i-\frac{1}{2}}^{+}+A_{i,0}v_x(x_{i-\frac{1}{2}}),\\
            & \left( M_i^{*}v \right)|_{I_{i,j}}-\left( M_i^{*}v \right)|_{I_{i,j-1}}=A_{i,j}v_x(x_{i,j}),\ j=1,...,k.
        \end{cases}
\end{equation}
\end{definition}

The 
following important property of $M^*$ was proven in \cite{Cao2021AnalysisOS}. 

\begin{lemma}[{\cite[Theorem 3.1]{Cao2021AnalysisOS}}]\label{lemma:Mstar-difference}
	For any $v \in L^2(\Omega)$, if there exists $V \in L^2(\Omega)$ such that $V_x = v$, then
	\begin{equation}\label{eq:Mstar-difference}
		\ip{v}{M^*\omega} = \ip{v}{\omega} + \sum_i R_i^k(V\omega_x)\quad \forall \omega \in H^1(\Omega).
	\end{equation}
\end{lemma}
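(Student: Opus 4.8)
The plan is to prove the identity on each cell $I_i$ separately and then sum over $i$, since the $L^2$ inner product, the right-hand side $\ip{v}{\omega}$, and the remainder sum all split cellwise. Fix a cell $I_i$ and write $w_{i,j}$ for the constant value of $M_i^*\omega$ on the control volume $I_{i,j}$. Because $M_i^*\omega \in \mathbb{V}^{k,*}$ is piecewise constant, the left-hand side becomes
\[
  \int_{I_i} v\,(M_i^*\omega)\,\mathrm{d}x = \sum_{j=0}^{k} w_{i,j}\int_{I_{i,j}} v\,\mathrm{d}x.
\]
Using the hypothesis $V_x=v$, each integral collapses to $\int_{I_{i,j}}v\,\mathrm{d}x = V(x_{i,j+1})-V(x_{i,j})$, so I would apply summation by parts (Abel summation) to the resulting sum $\sum_{j=0}^{k} w_{i,j}\bigl(V(x_{i,j+1})-V(x_{i,j})\bigr)$, producing boundary terms at $x_{i-\hf}=x_{i,0}$ and $x_{i+\hf}=x_{i,k+1}$ together with an interior sum weighted by the differences $w_{i,j}-w_{i,j-1}$.

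The crux, and the step I expect to be the main obstacle, is handling the endpoint value $w_{i,k}$. Unrolling the recursion \eqref{eq:Mistar} gives $w_{i,j} = \omega_{i-\hf}^+ + \sum_{l=0}^{j} A_{i,l}\omega_x(x_{i,l})$, so $w_{i,k}$ initially carries the \emph{left} trace $\omega_{i-\hf}^+$ rather than the right trace that must appear on the target side. The decisive observation is that $\omega|_{I_i}\in\mathbb{P}^k$ forces $\omega_x\in\mathbb{P}^{k-1}$, and since $Q_i^k$ is exact on $\mathbb{P}^{k-1}$ we have $R_i^k(\omega_x)=0$, i.e.
\[
  \sum_{j=0}^{k+1} A_{i,j}\omega_x(x_{i,j}) = \int_{I_i}\omega_x\,\mathrm{d}x = \omega_{i+\hf}^- - \omega_{i-\hf}^+.
\]
Substituting this into $w_{i,k}$ cancels the left trace and yields the clean form $w_{i,k} = \omega_{i+\hf}^- - A_{i,k+1}\omega_x(x_{i+\hf})$, which is exactly what converts the boundary term at $x_{i+\hf}$ into $V(x_{i+\hf})\omega_{i+\hf}^-$ minus a leftover endpoint quadrature contribution.

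Collecting terms, the boundary pieces give $V(x_{i+\hf})\omega_{i+\hf}^- - V(x_{i-\hf})\omega_{i-\hf}^+$, while the interior differences $w_{i,j}-w_{i,j-1}=A_{i,j}\omega_x(x_{i,j})$ together with the two leftover endpoint terms (weights $A_{i,0}$ and $A_{i,k+1}$) recombine into precisely $-\sum_{j=0}^{k+1} A_{i,j}V(x_{i,j})\omega_x(x_{i,j}) = -Q_i^k(V\omega_x)$. To match this against the target, I would independently integrate by parts in $\int_{I_i} v\omega\,\mathrm{d}x = \int_{I_i} V_x\omega\,\mathrm{d}x$, obtaining the same boundary terms minus $\int_{I_i} V\omega_x\,\mathrm{d}x$, and then invoke the definition $R_i^k(V\omega_x) = \int_{I_i} V\omega_x\,\mathrm{d}x - Q_i^k(V\omega_x)$ from \eqref{eq:quadrature} to conclude that the cellwise left-hand side equals $\int_{I_i} v\omega\,\mathrm{d}x + R_i^k(V\omega_x)$. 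Summing over $i$ then gives \eqref{eq:Mstar-difference}. Beyond the $R_i^k(\omega_x)=0$ observation, the remaining difficulty is purely careful bookkeeping: tracking the endpoint weights $A_{i,0}$ and $A_{i,k+1}$ through the summation by parts so that the interior and boundary contributions reassemble into the full quadrature $Q_i^k(V\omega_x)$.
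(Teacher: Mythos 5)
The paper never actually proves this lemma---it is quoted verbatim from \cite[Theorem 3.1]{Cao2021AnalysisOS}---so there is no in-paper proof to compare against, and I assess your argument on its own terms. Your cellwise computation is correct: the Abel summation, the unrolled recursion $w_{i,j}=\omega_{i-\hf}^{+}+\sum_{l=0}^{j}A_{i,l}\omega_x(x_{i,l})$, the rewriting $w_{i,k}=\omega_{i+\hf}^{-}-A_{i,k+1}\omega_x(x_{i+\hf})$, and the reassembly of interior and endpoint terms into $-Q_i^k(V\omega_x)$, matched against integration by parts of $\ipi{v}{\omega}$ and the definition \eqref{eq:quadrature} of $R_i^k$, all check out; this is also exactly the style of manipulation the paper performs for $\omega\in\mathbb{V}^k$ in the proof of \cref{thm:dgr-1DsemiSV}.

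There is, however, a genuine mismatch between what you prove and what the statement asserts. Your ``decisive observation'' $R_i^k(\omega_x)=0$ is available only because you assume $\omega|_{I_i}\in\mathbb{P}^k$, i.e.\ $\omega\in\mathbb{V}^k$, whereas the lemma claims the identity for all $\omega\in H^1(\Omega)$. This restriction cannot be removed by better bookkeeping: running your computation without it shows that the two sides of \eqref{eq:Mstar-difference} differ by exactly $-\sum_i V(x_{i+\hf})\,R_i^k(\omega_x)$, which need not vanish. Concretely, take $k=1$, a single cell $\Omega=I_i=[0,1]$ with midpoint subdivision and midpoint-rule weights $A_{i,0}=A_{i,2}=0$, $A_{i,1}=1$ (exact even on $\mathbb{P}^{2k-1}=\mathbb{P}^1$, so \cref{assump:subdivision} is not the issue), and $v=1$, $V=x$, $\omega=x^3$: then $\ip{v}{M^*\omega}=\frac{3}{8}$ while $\ip{v}{\omega}+R_i^1(V\omega_x)=\frac{1}{4}+\frac{3}{8}=\frac{5}{8}$. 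So the lemma as transcribed here (and the paper's subsequent remark that \eqref{eq:Mstar-difference} ``is valid for $v,\omega\not\in\mathbb{V}^k$'') is false in the stated generality; the polynomial assumption on $\omega$ in your argument is essential, not a defect. Since every invocation in this paper (\cref{thm:subdivision}, the proof of \cref{prop:ipstar}, \cref{prop:SV-reffun}) applies $M^*$ only to functions in $\mathbb{V}^k$, with the first argument $v$ allowed to be a general $L^2$ function exactly as your proof permits, your argument covers everything the paper needs---but you should state the restriction $\omega\in\mathbb{V}^k$ explicitly rather than claim the $H^1(\Omega)$ generality.
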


We observe that $M^*$ is a surjective linear operator, regardless of the choice of subdivision points $\{ x_{i,j} \}$ and the quadrature rules $\{ Q_i^k \}$. 
\begin{proposition}\label{prop:Mstar-surjective}
    The operator $M^*$ defined by \eqref{eq:Mstar} is surjective.
\end{proposition}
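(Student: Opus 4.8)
The plan is to exploit the block (cellwise) structure of $M^*$ together with a dimension count, reducing the global claim to a local bijectivity statement on a single cell. First I would note that, by \eqref{eq:Mstar}, $M^*$ acts independently on each $I_i$ through $M_i^*$ and that its range lies in the piecewise-constant space $\mathbb{V}^{k,*}$. Since on each $I_i$ one has $\dim \mathbb{P}^{k}(I_i)=k+1$, which equals the number $k+1$ of control volumes $I_{i,j}$ (hence the number of piecewise-constant degrees of freedom over $I_i$), the local map
$M_i^*\colon \mathbb{P}^k(I_i)\to\mathbb{R}^{k+1}$, $v\mapsto\big((M_i^*v)|_{I_{i,0}},\dots,(M_i^*v)|_{I_{i,k}}\big)$,
goes between spaces of equal finite dimension. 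It therefore suffices to prove that each $M_i^*$ is injective; surjectivity of $M^*$ onto $\mathbb{V}^{k,*}$ then follows by applying the rank--nullity theorem cellwise.

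Next I would read off the kernel directly from the recursion \eqref{eq:Mistar}. Suppose $M_i^*v\equiv 0$. The telescoping relations in \eqref{eq:Mistar} give $A_{i,j}\,v_x(x_{i,j})=0$ for $j=1,\dots,k$, and the first line gives $v_{i-\hf}^{+}+A_{i,0}\,v_x(x_{i-\hf})=0$. Provided the interior weights $A_{i,j}$ $(1\le j\le k)$ are nonzero, the derivative $v_x$---a polynomial of degree at most $k-1$---vanishes at the $k$ distinct subdivision points $x_{i,1},\dots,x_{i,k}$, so $v_x\equiv 0$. Then $v$ is constant on $I_i$ and the first relation collapses to $v_{i-\hf}^{+}=0$, whence $v\equiv 0$. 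This yields injectivity and, with the dimension count, surjectivity. Equivalently, and more transparently, I would exhibit the inverse explicitly: given a target $\omega^*\in\mathbb{V}^{k,*}$ with constant values $\omega^*_{i,j}$ on $I_{i,j}$, set $v_x(x_{i,j})=(\omega^*_{i,j}-\omega^*_{i,j-1})/A_{i,j}$ for $j=1,\dots,k$, take the unique $v_x\in\mathbb{P}^{k-1}(I_i)$ interpolating these $k$ nodal values, and recover $v\in\mathbb{P}^{k}(I_i)$ by integrating and fixing the constant via $v_{i-\hf}^{+}=\omega^*_{i,0}-A_{i,0}\,v_x(x_{i-\hf})$; by construction $M_i^*v=\omega^*|_{I_i}$.

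The step I expect to be the crux is precisely the inversion of \eqref{eq:Mistar}. The definition prescribes the control-volume values as \emph{cumulative} sums of weighted point-derivatives of $v$, a forward, lower-triangular construction, so the real content is reconstructing the degree-$(k-1)$ polynomial $v_x$ from its prescribed nodal values and then integrating back to $v$. This reconstruction hinges on the nondegeneracy of the interior weights $A_{i,1},\dots,A_{i,k}$: their nonvanishing is what makes the nodal derivative values well-defined and permits polynomial interpolation on $\mathbb{P}^{k-1}(I_i)$ at the $k$ distinct subdivision points. I would therefore make this nondegeneracy explicit---verifying that it holds for the admissible quadratures $Q_i^k$ under consideration---since it is exactly the property guaranteeing that each $M_i^*$, and hence $M^*$, is bijective independently of the specific subdivision points and quadrature rule.
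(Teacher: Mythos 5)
Your proposal is correct in exactly the regime where the paper's own argument is correct, and your ``equivalent, more transparent'' variant \emph{is} the paper's proof: given $\omega^*\in\mathbb{V}^{k,*}$, the paper interpolates the divided increments $(\omega^*_{i,j}-\omega^*_{i,j-1})/A_{i,j}$ at $x_{i,1},\dots,x_{i,k}$ by a polynomial $p_i\in\mathbb{P}^{k-1}(I_i)$, takes an antiderivative $P_i\in\mathbb{P}^{k}(I_i)$, and fixes the constant via $P_i(x_{i-\hf})=\omega^*_{i,0}-A_{i,0}\,p_i(x_{i-\hf})$, exactly as you describe. Your primary route---cellwise reduction, kernel computation, and rank--nullity for the square map $M_i^*:\mathbb{P}^k(I_i)\to\mathbb{R}^{k+1}$---is a genuinely different, non-constructive packaging of the same local fact; what it buys is the stronger conclusion that $M_i^*$ is \emph{bijective} (a property the paper only cites from \cite{Wei2023AnalysisOT} for Gauss and right Radau points), while the paper's construction buys an explicit preimage and, in particular, covers $k=0$ trivially.

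The one substantive point is the nondegeneracy of the interior weights, and here your instincts are sharper than your closing plan. You are right that nonvanishing of $A_{i,1},\dots,A_{i,k}$ is the crux: if some interior $A_{i,j}=0$, then by \eqref{eq:Mistar} every function in the range of $M_i^*$ takes equal values on $I_{i,j-1}$ and $I_{i,j}$, so the range is a proper subspace and surjectivity genuinely fails. However, your intention to ``verify that nondegeneracy holds for the admissible quadratures'' cannot be carried out: exactness of $Q_i^k$ on $\mathbb{P}^{k-1}(I_i)$ does not exclude zero interior weights (for $k=1$, the trapezoidal weights $A_{i,0}=A_{i,2}=\frac{h_i}{2}$, $A_{i,1}=0$ are admissible), so nondegeneracy must be imposed as a hypothesis rather than derived. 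Be aware that the paper is loose on the same point: its proof sets $p_i(x_{i,j})=0$ whenever $A_{i,j}=0$, but then $M_i^*P_i$ and $\omega^*$ disagree across any such $x_{i,j}$ with $\omega^*_{i,j}\neq\omega^*_{i,j-1}$, so the assertion ``regardless of the quadrature rules'' tacitly assumes exactly the nonvanishing interior weights that you state explicitly.
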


The proof \Cref{prop:Mstar-surjective} is presented in \Cref{app:1}. It is worth noting that, 
if the subdivision points $\{ x_{i,j} \}$ are taken as the Gauss or right Radau points, 
then \cite{Wei2023AnalysisOT} proved that $M^*$ is bijective.

Regardless of the choice of subdivision points $\{ x_{i,j} \}$ and the quadrature rules $\{ Q_i^k \}$, 
according to \cref{prop:Mstar-surjective}, the semi-discrete SV scheme \eqref{eq:1DsemiSV} is equivalent to the following ``Galerkin'' form
\begin{equation}\label{eq:1DsemiSV-G}
\ip{(u_{h})_t}{M^* \omega}=\sum_{i}\sum_{j=0}^{k}\left( M^* \omega \right)_{i,j}(\hat{f}_{i,j}-\hat{f}_{i,j+1})\quad \forall \omega\in \mathbb{V}^{k},
\end{equation}
where $\left( M^* \omega \right)_{i,j} = \left( M^* \omega \right) |_{I_{i,j}} $ denotes the constant value of $M^* \omega$ on $I_{i,j}$. The form \eqref{eq:1DsemiSV-G} was first proposed by Cao and Zou \cite{Cao2021AnalysisOS} for two special sets of subdivision points (the Gauss and right Radau points).  

\subsection{DG representation of semi-discrete SV method}

From \eqref{eq:quadrature}, \eqref{eq:Mstar}, and \eqref{eq:1DsemiSV-G}, one can see that the choice of subdivision points  $\{ x_{i,j} \}$ determines the properties of the corresponding SV method. In the following, we consider the SV method with the subdivision points satisfying the following assumption.
\begin{assumption}\label{assump:subdivision}
    There exists a quadrature $Q_i^k$ on the subdivision points $\{ x_{i,j} \}$ such that 
    \begin{enumerate}
        \item $R_i^k(v) = 0$ for all $v \in \mathbb{P}^{2k-1}(I_i)$;
        \item $\frac{h_i}{2k-1} - Q_i^k(L_{i,k+1}L_{i,k-1})>0$, where $L_{i,\ell}$ denotes the shifted Legendre polynomial of degree $\ell$ on $I_i$ satisfying $\ipi{L_{i,\ell}}{L_{i,\ell}}=\frac{h_i}{2\ell+1}$. 
    \end{enumerate}
\end{assumption}

An interesting observation is that the bilinear form $\ip{\cdot}{M^* \cdot}$ is actually an inner product when \cref{assump:subdivision} holds. Specifically, we have the following theorem. 
\begin{theorem}\label{thm:subdivision}
    $\ip{\cdot}{M^* \cdot}$ is an inner product on $\mathbb{V}^{k}$ $\Leftrightarrow$ \cref{assump:subdivision} holds.
\end{theorem}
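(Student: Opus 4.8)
The plan is to reduce the statement to a cell-local computation and then read off the two conditions in \cref{assump:subdivision} as the symmetry and positive-definiteness requirements of the bilinear form. Since $M^*$ acts cell by cell and $\ip{u}{M^*v}=\sum_i\ipi{u}{M_i^*v}$ depends, on each summand, only on $u|_{I_i}$ and $v|_{I_i}$, the form $\ip{\cdot}{M^*\cdot}$ is an inner product on $\mathbb V^k$ if and only if each local form $B_i(u,v):=\ipi{u}{M_i^*v}$ is an inner product on $\mathbb P^k(I_i)$. Bilinearity is immediate from the linearity of $M_i^*$, so only symmetry and positive-definiteness remain. First I would apply the cell-local version of \cref{lemma:Mstar-difference} to rewrite $B_i(u,v)=\ipi{u}{v}+R_i^k(U v_x)$, where $U$ is any antiderivative of $u$ on $I_i$; this is well defined because $v_x\in\mathbb P^{k-1}(I_i)$ and the baseline exactness of $Q_i^k$ on $\mathbb P^{k-1}$ makes $R_i^k(U v_x)$ independent of the integration constant.

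For symmetry, I would study $B_i(u,v)-B_i(v,u)=R_i^k\big((Uv-Vu)_x\big)$. A short leading-coefficient computation shows that the top-degree terms of $Uv$ and $Vu$ cancel, so $Uv-Vu\in\mathbb P^{2k}(I_i)$ and hence $(Uv-Vu)_x\in\mathbb P^{2k-1}(I_i)$. Taking monomials $u=x^a$, $v=x^b$ with $0\le a,b\le k$ produces $(Uv-Vu)_x=\frac{(b-a)(a+b+1)}{(a+1)(b+1)}x^{a+b}$, and letting $a,b$ vary one checks these span the whole space $x\,\mathbb P^{2k-2}(I_i)$ of degree-$\le 2k-1$ polynomials with vanishing constant term. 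Together with $R_i^k(1)=0$ (again baseline exactness), this yields the equivalence: $B_i$ is symmetric $\Leftrightarrow$ $R_i^k$ vanishes on all of $\mathbb P^{2k-1}(I_i)$, which is exactly condition~1 of \cref{assump:subdivision}.

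Assuming condition~1, I would expand $v=\sum_{\ell=0}^k b_\ell L_{i,\ell}$ and evaluate $B_i(v,v)=\ipi{v}{v}+R_i^k(Vv_x)$. Orthogonality gives $\ipi{v}{v}=\sum_{\ell=0}^k b_\ell^2\frac{h_i}{2\ell+1}$, while $R_i^k$ annihilating $\mathbb P^{2k-1}$ means $R_i^k(Vv_x)$ depends only on the degree-$2k$ coefficient of $Vv_x$, which involves $b_k^2$ alone (so no cross terms survive). Converting this leading coefficient via the orthogonality relation $\int_{I_i}L_{i,k+1}L_{i,k-1}\,{\rm d}x=0$, so that $R_i^k(L_{i,k+1}L_{i,k-1})=-Q_i^k(L_{i,k+1}L_{i,k-1})$, and using the ratio $\lambda_{i,k}/\lambda_{i,k-1}=2(2k-1)/(kh_i)$ of the leading coefficients $\lambda_{i,\ell}$ of $L_{i,\ell}$, I expect to arrive at
\begin{equation*}
B_i(v,v)=\sum_{\ell=0}^{k-1}b_\ell^2\frac{h_i}{2\ell+1}+b_k^2\,\frac{2k-1}{2k+1}\left(\frac{h_i}{2k-1}-Q_i^k(L_{i,k+1}L_{i,k-1})\right).
\end{equation*}
Since the first $k$ coefficients are positive, $B_i(\cdot,\cdot)$ is positive-definite if and only if the bracketed factor is positive, i.e. condition~2 of \cref{assump:subdivision}. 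Combining the two equivalences over all cells proves the theorem.

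I expect the main obstacle to be the positive-definiteness step: one must verify that $B_i(v,v)$ is genuinely diagonal in the Legendre basis and then track the shifted-Legendre normalizations, the antiderivative/derivative leading coefficients, and the orthogonality cancellation carefully enough to match the bracketed quantity \emph{exactly} with the expression in condition~2. The symmetry direction, by contrast, reduces to the clean spanning argument above.
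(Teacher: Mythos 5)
Your proposal is correct and takes essentially the same approach as the paper's proof: both reduce to the cell-local bilinear form via \cref{lemma:Mstar-difference}, identify symmetry with condition~1 by testing on polynomial pairs whose asymmetry isolates $R_i^k$ on monomials of degree $k,\dots,2k-1$, and then reduce positive-definiteness (given condition~1) to the sign of $\ipi{L_{i,k}}{M_i^* L_{i,k}}$, arriving at exactly the expression in \eqref{eq:normLik}. The only differences are computational details: you use general monomial pairs $x^a,x^b$ where the paper uses the specific pairs $p_s=x^s+x^k$, $q_s=2x^s+x^k$, and you evaluate $\ipi{L_{i,k}}{M_i^* L_{i,k}}$ via leading-coefficient ratios and the orthogonality relation $R_i^k(L_{i,k+1}L_{i,k-1})=-Q_i^k(L_{i,k+1}L_{i,k-1})$ rather than the Legendre derivative identities, both routes yielding the same value $\frac{2k-1}{2k+1}\left(\frac{h_i}{2k-1}-Q_i^k(L_{i,k+1}L_{i,k-1})\right)$.
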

\begin{proof}
    We first show that 
    \begin{equation}\label{eq:defip1}
        \ipi{\omega}{M_i^* v} = \ipi{v}{M_i^*\omega}\quad \forall v,\omega \in \mathbb{P}^{k}(I_i) ~ \Leftrightarrow ~ R_i^k(v) = 0\quad \forall v \in \mathbb{P}^{2k-1}(I_i).
    \end{equation}
    $(\Leftarrow)$: Without the loss of generality, we assume that $v=\sum_{l=0}^{k}v_l x^l$ and $\omega = \sum_{l=0}^{k}\omega_l x^l$. Then by \cref{lemma:Mstar-difference},
    \begin{equation}
        \ipi{\omega}{M_i^* v} = \ipi{\omega}{v} + \frac{k}{k+1}\omega_k v_k R_i^k(x^{2k}) = \ipi{v}{M_i^*\omega}.
    \end{equation}
    $(\Rightarrow)$: For $0\leq s \leq k-1$, consider $p_s = x^s + x^k$ and $q_s = 2x^s + x^k$. By \cref{lemma:Mstar-difference},
    \begin{equation*}
        \left( \frac{k}{s+1} - \frac{s}{k+1} \right) R_i^k(x^{k+s}) = \ipi{q_s}{M_i^* p_s} - \ipi{p_s}{M_i^* q_s} = 0.
    \end{equation*}
    Hence, $R_i^k(x^{k+s})=0$ for $s=0,\dots,k-1$, which indicates $R_i^k(v) = 0$ for all $v \in \mathbb{P}^{2k-1}(I_i)$.

    Next, assume that $R_i^k(v) = 0$ for all $v \in \mathbb{P}^{2k-1}(I_i)$. Then, by  \cref{lemma:Mstar-difference},
    \begin{equation*}
        \ipi{v}{M_i^*v} >0\quad \forall v \in \mathbb{P}^{k}(I_i)\setminus\{0\}  ~ \Leftrightarrow ~ \ipi{L_{i,k}}{M_i^* L_{i,k}}>0.
    \end{equation*}
    Note that the Legendre polynomials in $I_i$ satisfy $L_{i,\ell}(x_{i-\hf}) = (-1)^\ell$, $L_{i,\ell}(x_{i+\hf})=1$, $\ipi{L_{i,\ell}}{L_{i,\ell}}=\frac{h_i}{2\ell+1}$, $(2k+1)L_{i,k} = \frac{h_i}{2}\left( L_{i,k+1}^{'}- L_{i,k-1}^{'}\right)$, and
    \begin{equation*}
        (2k+1)L_{i,k} = \frac{h_i}{2}\left( L_{i,k+1}^{'}- L_{i,k-1}^{'}\right),\quad \frac{h_i}{2}L_{i,k}^{'} = (2k-1)L_{i,k-1} + (2k-5)L_{i,k-3} + \cdots.
    \end{equation*}
    It follows that
    \begin{equation}\label{eq:normLik}
        \begin{aligned}
            \ipi{L_{i,k}}{M_i^* L_{i,k}} & = \frac{h_i}{2(2k+1)}\left( Q_i^k(L_{i,k-1}L_{i,k}^{'}) - Q_i^k(L_{i,k+1}L_{i,k}^{'}) \right) \\
            & = \frac{2k-1}{2k+1}\left( \frac{h_i}{2k-1} - Q_i^k(L_{i,k+1}L_{i,k-1}) \right).
        \end{aligned}
    \end{equation}
    Hence, $\ipi{\cdot}{M_i^* \cdot}$ is an inner product if and only if $\frac{h_i}{2k-1} - Q_i^k(L_{i,k+1}L_{i,k-1})>0$.
\end{proof}

\begin{remark}
	Although \eqref{eq:Mstar-difference} is valid for $v,\omega \not \in \mathbb{V}^k$, 
	it is important to note that 
	 $\ip{\cdot}{M^* \cdot}$ forms an inner product \textbf{only on} the piecewise polynomial space $\mathbb{V}^k$. 
\end{remark}

\begin{remark}
    \cite{Cao2021AnalysisOS,Cao2023UnifiedAO,An2023AnyOS,Wei2023AnalysisOT} have studied the SV methods with the Gauss and right Radau nodes as the subdivision points. 
     For these special subdivision points, \cite{Wei2023AnalysisOT} also noticed that $\ip{\cdot}{M^* \cdot}$ is an inner product on $\mathbb{V}^{k}$.  
    \cref{assump:subdivision} is, in fact, a generalization of the restrictions on the subdivision points discussed in  \cite{Cao2021AnalysisOS,Cao2023UnifiedAO,An2023AnyOS,Wei2023AnalysisOT}. 
\end{remark}

Under \cref{assump:subdivision}, we can verify that $M^*$ is bounded. 
\begin{proposition}\label{prop:Mstar}
    $\nm{M^*} := \sup_{\omega \not \equiv 0} \frac{\nm{M^*\omega}}{\nm{\omega}} \leq C$, where $C$ is a positive constant independent of the mesh $\{ I_i \}$.
\end{proposition}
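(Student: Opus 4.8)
The plan is to exploit the fact that $M^*$ acts cell by cell, so that $\nm{M^*\omega}^2 = \sum_i \nm{M_i^*(\omega|_{I_i})}_{L^2(I_i)}^2$, and thereby reduce the global bound to a single uniform cell-wise estimate $\nm{M_i^* v}_{L^2(I_i)} \le C \nm{v}_{L^2(I_i)}$ with $C$ independent of $i$ and of $h$. Since each $M_i^*v$ is piecewise constant on the control volumes $\{I_{i,j}\}$, its $L^2$ norm is simply a length-weighted sum of the squared constant values, so everything comes down to controlling those values by $\nm{v}_{L^2(I_i)}$.

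First I would introduce the affine map $x = x_{i-\hf} + h_i \hat{x}$ from the reference cell $\hat{I}=[0,1]$ onto $I_i$ and set $\hat{v}(\hat{x}) = v(x)$ for $v \in \mathbb{P}^k(I_i)$. Under this map the ingredients of the recursion \eqref{eq:Mistar} scale predictably: point values $v(x_{i,j})$ are invariant, the derivative satisfies $v_x(x) = h_i^{-1}\hat{v}_{\hat{x}}(\hat{x})$, and since the weights $A_{i,j}$ reproduce $\int_{I_i}(\cdot)\,\mathrm{d}x = h_i\int_0^1(\cdot)\,\mathrm{d}\hat{x}$ they scale as $A_{i,j}=h_i\hat{A}_j$. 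Substituting these into \eqref{eq:Mistar}, I expect the factors of $h_i$ in each term $A_{i,j}v_x(x_{i,j})$ to cancel exactly, so that $M_i^*$ is carried onto a fixed reference operator $\hat{M}^*$ acting on $\hat{v}$, depending only on the reference subdivision pattern and reference weights $\{\hat{A}_j\}$, not on $h_i$.

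The final step is the $L^2$ scaling $\nm{v}_{L^2(I_i)}^2 = h_i\nm{\hat{v}}_{L^2(\hat{I})}^2$ and, likewise, $\nm{M_i^*v}_{L^2(I_i)}^2 = h_i\nm{\hat{M}^*\hat{v}}_{L^2(\hat{I})}^2$, so that the common factor $h_i$ cancels in the ratio and $\nm{M_i^* v}_{L^2(I_i)} \le \nm{\hat{M}^*}\,\nm{v}_{L^2(I_i)}$. Because $\mathbb{P}^k(\hat{I})$ is finite-dimensional, the functionals $\hat{v}\mapsto \hat{v}(\hat{x}_j)$ and $\hat{v}\mapsto \hat{v}_{\hat{x}}(\hat{x}_j)$ are bounded, hence $\hat{M}^*$ is a bounded linear operator with $\nm{\hat{M}^*} =: C < \infty$. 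Summing $\nm{M_i^*(\omega|_{I_i})}_{L^2(I_i)}^2 \le C^2\nm{\omega|_{I_i}}_{L^2(I_i)}^2$ over all cells then yields $\nm{M^*\omega} \le C\nm{\omega}$, as claimed.

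I anticipate the main obstacle to be the uniformity of the constant across cells rather than the scaling itself. If the subdivision points follow a single fixed pattern mapped into every cell, then $\hat{M}^*$ is literally one operator and $C=\nm{\hat{M}^*}$ is immediate; if the patterns vary from cell to cell, one instead obtains a family $\{\hat{M}_i^*\}$ and must argue that $\sup_i\nm{\hat{M}_i^*}<\infty$, which needs the reference subdivision points to stay uniformly separated from one another and from the endpoints. The quasi-uniformity hypothesis $h\le C h_i$ together with the finite-dimensionality of $\mathbb{P}^k$ is exactly what keeps the relevant inverse-type constants bounded, while \cref{assump:subdivision} itself is not essential here and enters only as the running assumption of the section. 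As an alternative to the change of variables, one could estimate the values $m_j := (M_i^*v)|_{I_{i,j}}$ directly, bounding $v(x_{i,j})$ and $v_x(x_{i,j})$ through inverse inequalities on $\mathbb{P}^k(I_i)$ together with $A_{i,j}\le C h_i$; this reproduces the same $h$-independent constant without passing to a reference cell.
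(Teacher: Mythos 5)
Your proof is correct, but it follows a genuinely different route from the paper's. The paper's proof (\Cref{app:2}) works directly on the physical cell: it bounds the piecewise-constant values of $M^*\omega$ by the Cauchy--Schwarz inequality with the quadrature weights, obtaining $\nm{M^*\omega}^2 \leq (k+1)\sum_i \bigl( h_i\nm{\omega}_{L^\infty(I_i)}^2 + h_i^2 Q_i^k(\omega_x^2) \bigr)$, then invokes the exactness of $Q_i^k$ on $\mathbb{P}^{2k-1}(I_i)$ (this is precisely where \cref{assump:subdivision} enters) to identify $Q_i^k(\omega_x^2) = \nm{\omega_x}_{L^2(I_i)}^2$, and finishes with the per-cell inverse inequalities \eqref{eq:normequiv-1}--\eqref{eq:normequiv}. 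Your reference-element argument replaces the exactness step entirely: after the affine scaling $A_{i,j}=h_i\hat A_j$, $v_x = h_i^{-1}\hat v_{\hat x}$, the operator $M_i^*$ becomes a scale-free reference operator whose boundedness follows from finite-dimensionality alone, so your route is more elementary and applies to any quadrature whose normalized weights are uniformly controlled, whereas the paper's route yields a cleaner explicit constant. Your closing ``alternative'' (estimating $(M_i^*v)|_{I_{i,j}}$ directly with inverse inequalities and $A_{i,j}\le Ch_i$) is essentially the paper's proof. Two refinements to your discussion: first, quasi-uniformity of the mesh plays no role in either argument---both are entirely cell-local, with all constants formed from ratios of norms on the same cell---so it is not what ``keeps the inverse-type constants bounded''; second, your caveat about uniformity of $\sup_i\nm{\hat M_i^*}$ is a genuine point that the paper glosses over: its Cauchy--Schwarz step implicitly requires the weights to satisfy $0\le A_{i,l}\le h_i$ (or at least $\sum_l |A_{i,l}|\le Ch_i$), which is the exact analogue of your requirement that the reference weights $\{\hat A_{i,l}\}$ be uniformly bounded across cells, a fact the paper itself leans on again in \Cref{app:3} when asserting that $Q_i^k/h_i$ is independent of $h_i$.
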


The proof of \Cref{prop:Mstar} is given in \Cref{app:2}.

\begin{definition}\label{def:ipstar}
    Define the inner product $\ip{\cdot}{\cdot}_*: \mathbb{V}^k \times \mathbb{V}^k \to \mathbb{R}$ by
    \begin{equation*}
        \ip{v}{\omega}_* = \ip{v}{M^* \omega}\quad \forall v,\omega \in \mathbb{V}^k,
    \end{equation*}
    and denote $\nm{\cdot}_*=\sqrt{\ip{\cdot}{\cdot}_*}$. We refer to $\nm{\cdot}_*$ as the energy norm on $\mathbb{V}^k$.
\end{definition}
\begin{proposition}\label{prop:ipstar}
    For any $v \in \mathbb{V}^{k}$, we have 
            \begin{equation}\label{eq:propipstar_1}
    	\ip{v}{\omega}_* = \ip{v}{\omega}\quad \forall \omega \in \mathbb{V}^{k-1}.
    \end{equation}
	In addition, the energy norm $\nm{\cdot}_*$ is equivalent to the $L^2$-norm $\nm{\cdot}$ on $\mathbb{V}^k$, namely, 
        \begin{equation}\label{eq:propipstar_2}
            c\nm{v} \leq \nm{v}_* \leq C\nm{v}\quad \forall v \in \mathbb{V}^{k}
        \end{equation}
        for some positive constants $c$ and $C$ independent of $\{ I_i \}$.
\end{proposition}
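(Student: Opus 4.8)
The plan is to treat the two claims separately. For the consistency property \eqref{eq:propipstar_1}, I would apply \cref{lemma:Mstar-difference} directly. Given $v \in \mathbb{V}^k$, choose a global antiderivative $V$ with $V_x = v$ (e.g.\ $V(x) = \int_a^x v\,\mathrm{d}s$), which restricts to a polynomial of degree $\le k+1$ on each cell. For $\omega \in \mathbb{V}^{k-1}$ we have $\omega_x|_{I_i} \in \mathbb{P}^{k-2}(I_i)$, so the product $V\omega_x|_{I_i}$ lies in $\mathbb{P}^{2k-1}(I_i)$. By the first condition in \cref{assump:subdivision}, $R_i^k$ annihilates every polynomial of degree $\le 2k-1$, hence $R_i^k(V\omega_x) = 0$ for all $i$. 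Substituting into \eqref{eq:Mstar-difference} and using \cref{def:ipstar} yields $\ip{v}{\omega}_* = \ip{v}{M^* \omega} = \ip{v}{\omega}$, which is exactly \eqref{eq:propipstar_1}.

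For the norm equivalence \eqref{eq:propipstar_2}, I would argue cell by cell, using $\ip{v}{M^* v} = \sum_i \ipi{v}{M_i^* v}$ and $\nm{v}^2 = \sum_i \ipi{v}{v}$, so that it suffices to compare the local bilinear form $B_i(v,\omega) := \ipi{v}{M_i^* \omega}$ with $\ipi{\cdot}{\cdot}$ on $\mathbb{P}^k(I_i)$ with constants uniform in $i$. The crucial structural fact, already extracted in the proof of \cref{thm:subdivision}, is that $B_i(v,\omega) - \ipi{v}{\omega}$ depends only on the leading (degree-$k$) monomial coefficients of $v$ and $\omega$. Expanding $v = \sum_{\ell=0}^k \hat{v}_\ell L_{i,\ell}$ in the Legendre basis, only $L_{i,k}$ carries a nonzero $x^k$ coefficient, so $B_i$ is diagonal in this basis: it has the eigenvalues $\ipi{L_{i,\ell}}{L_{i,\ell}} = \frac{h_i}{2\ell+1}$ for $\ell < k$, identical to those of $\ipi{\cdot}{\cdot}$, and the single modified eigenvalue $\ipi{L_{i,k}}{M_i^* L_{i,k}}$ for $\ell = k$. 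Consequently the generalized eigenvalues of $B_i$ relative to $\ipi{\cdot}{\cdot}$ all equal $1$ except for the top mode, whose ratio is $\rho_i := (2k+1)\,\ipi{L_{i,k}}{M_i^* L_{i,k}}/h_i$.

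It then remains to bound $\rho_i$ above and below uniformly. Using \eqref{eq:normLik}, $\rho_i = \frac{2k-1}{h_i}\left( \frac{h_i}{2k-1} - Q_i^k(L_{i,k+1}L_{i,k-1}) \right)$, which is strictly positive by the second condition in \cref{assump:subdivision}. The key observation is scale invariance: mapping $I_i$ affinely onto a reference interval sends $L_{i,\ell}$ to the standard Legendre polynomial and $Q_i^k$ to a reference quadrature with weights scaled by $h_i/2$, so both $\frac{h_i}{2k-1}$ and $Q_i^k(L_{i,k+1}L_{i,k-1})$ scale linearly in $h_i$ and $\rho_i$ becomes independent of $h_i$, depending only on $k$ and the reference subdivision pattern. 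Hence, under a common relative subdivision (or, more generally, a uniform positive gap in the second condition of \cref{assump:subdivision}), $\rho_i \equiv \rho > 0$ is a fixed constant, and taking $c^2 = \min(1,\rho)$ and $C^2 = \max(1,\rho)$ gives $c\,\ipi{v}{v}^{1/2} \le B_i(v,v)^{1/2} \le C\,\ipi{v}{v}^{1/2}$; summing over $i$ yields \eqref{eq:propipstar_2}. The upper bound alternatively follows at once from the Cauchy--Schwarz inequality together with $\nm{M^*} \le C$ from \cref{prop:Mstar}. The main obstacle is precisely this last step, namely ensuring that the lower-bound constant is mesh-independent; the scale-invariance argument, which reduces the positive quantity in \cref{assump:subdivision} to a dimensionless reference quantity, is what makes it work.
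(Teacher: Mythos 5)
Your proof is correct and follows essentially the same route as the paper: part \eqref{eq:propipstar_1} via \cref{lemma:Mstar-difference} together with the first condition of \cref{assump:subdivision}, the upper bound via Cauchy--Schwarz and $\nm{M^*}\leq C$ (\cref{prop:Mstar}), and the lower bound via \eqref{eq:normLik} with mesh-independence from the scale invariance of $Q_i^k/h_i$. Your explicit Legendre-basis diagonalization simply spells out the step the paper compresses into ``Using \eqref{eq:normLik} and \eqref{eq:propipstar_1}, we can verify that $c\nm{v}\leq\nm{v}_*$,'' and your remark about needing a uniform positive gap across cells is a careful reading of the same implicit assumption the paper makes.
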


The proof of \Cref{prop:ipstar} is presented in \Cref{app:3}.

Based on \eqref{eq:1DsemiSV-G} and \cref{def:ipstar}, we reformulate the right hand side of \eqref{eq:1DsemiSV-G} and derive a DG representation of the SV method as follows. 
\begin{theorem}[DG representation of SV method]\label{thm:dgr-1DsemiSV}
    The semi-discrete SV scheme \eqref{eq:1DsemiSV} is equivalent to the following scheme:
    \begin{equation}\label{eq:dgr-1DsemiSV}
        \ip{(u_{h})_t}{\omega}_* = \sum_i \left( Q_i^k\big( \hat{f} \omega_x \big)+ \omega_{i-\frac{1}{2}}^{+}\hat{f}_{i-\frac{1}{2}}-\omega_{i+\frac{1}{2}}^{-}\hat{f}_{i+\frac{1}{2}} \right)\quad \forall \omega \in \mathbb{V}^{k},
    \end{equation}
    where $\hat{f}$ denotes the numerical fluxes, and $Q_i^k\big( \hat{f} \omega_x \big) = \sum_{j=0}^{k+1}A_{i,j}\hat{f}_{i,j}\omega_x\left( x_{i,j} \right).$
\end{theorem}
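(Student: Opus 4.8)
The plan is to reduce everything to the equivalent Galerkin form \eqref{eq:1DsemiSV-G}, which holds by the surjectivity established in \cref{prop:Mstar-surjective}. On the left-hand side, $\ip{(u_h)_t}{M^*\omega}$ is by \cref{def:ipstar} precisely $\ip{(u_h)_t}{\omega}_*$, so no work is required there. The whole task is therefore to show, cell by cell, that
\begin{equation*}
\sum_{j=0}^{k}(M^*\omega)_{i,j}(\hat f_{i,j}-\hat f_{i,j+1}) = Q_i^k(\hat f\omega_x) + \omega_{i-\hf}^+\hat f_{i-\hf} - \omega_{i+\hf}^-\hat f_{i+\hf}\quad\forall i.
\end{equation*}

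First I would abbreviate $M_{i,j}:=(M^*\omega)_{i,j}$ and apply summation by parts (Abel summation) to the flux-difference sum, rewriting it as
\begin{equation*}
M_{i,0}\hat f_{i,0} + \sum_{j=1}^{k}(M_{i,j}-M_{i,j-1})\hat f_{i,j} - M_{i,k}\hat f_{i,k+1}.
\end{equation*}
Next I would substitute the defining recursion \eqref{eq:Mistar}, namely $M_{i,0}=\omega_{i-\hf}^+ + A_{i,0}\omega_x(x_{i,0})$ and $M_{i,j}-M_{i,j-1}=A_{i,j}\omega_x(x_{i,j})$. Recalling $x_{i,0}=x_{i-\hf}$ and $\hat f_{i,0}=\hat f_{i-\hf}$, this collapses the first two groups into $\omega_{i-\hf}^+\hat f_{i-\hf} + \sum_{j=0}^{k}A_{i,j}\hat f_{i,j}\omega_x(x_{i,j})$, leaving only the boundary term $-M_{i,k}\hat f_{i,k+1}$ to be treated.

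The crux is to convert $-M_{i,k}\hat f_{i,k+1}$ into the correct outflow interface term while reconciling the fact that the quadrature $Q_i^k$ ranges over the \emph{full} node set $j=0,\dots,k+1$, whereas the recursion \eqref{eq:Mistar} only furnishes the weights $A_{i,0},\dots,A_{i,k}$. Here I would invoke \cref{assump:subdivision}: since $\omega\in\mathbb P^{k}(I_i)$ gives $\omega_x\in\mathbb P^{k-1}(I_i)$ and $Q_i^k$ is exact on $\mathbb P^{2k-1}\supseteq\mathbb P^{k-1}$, the fundamental theorem of calculus yields $\omega_{i+\hf}^- - \omega_{i-\hf}^+ = \int_{I_i}\omega_x\,{\rm d}x = Q_i^k(\omega_x) = \sum_{j=0}^{k+1}A_{i,j}\omega_x(x_{i,j})$. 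Telescoping the recursion \eqref{eq:Mistar} identifies $M_{i,k}=\omega_{i-\hf}^+ + \sum_{j=0}^{k}A_{i,j}\omega_x(x_{i,j})$, and combining these two facts forces $M_{i,k}=\omega_{i+\hf}^- - A_{i,k+1}\omega_x(x_{i+\hf})$. Substituting this into $-M_{i,k}\hat f_{i,k+1}$ produces exactly the missing interface term $-\omega_{i+\hf}^-\hat f_{i+\hf}$ together with the extra contribution $A_{i,k+1}\hat f_{i,k+1}\omega_x(x_{i,k+1})$, which is precisely the $j=k+1$ summand needed to upgrade $\sum_{j=0}^{k}A_{i,j}\hat f_{i,j}\omega_x(x_{i,j})$ into the complete $Q_i^k(\hat f\omega_x)$. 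Summing over $i$ then closes the equivalence.

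I expect the main obstacle to be exactly this reconciliation step. Because $M_i^*$ is built from only the first $k+1$ quadrature weights, neither the outflow flux $\hat f_{i+\hf}$ nor the top weight $A_{i,k+1}$ appears explicitly in $M_{i,k}$; recovering them requires the degree-$(2k-1)$ exactness guaranteed by \cref{assump:subdivision} (the weaker degree-$(k-1)$ exactness available for a generic quadrature would not let me rewrite the interior representation of $M_{i,k}$ as a boundary-plus-top-node representation). Everything else is routine bookkeeping from Abel summation and the recursion \eqref{eq:Mistar}.
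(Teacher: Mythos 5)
Your proof is correct and follows essentially the same route as the paper's: Abel summation of the flux differences, substitution of the recursion \eqref{eq:Mistar}, and identification of the value of $M^*\omega$ on the last control volume as $\omega_{i+\hf}^{-} - A_{i,k+1}\omega_x(x_{i,k+1})$ via the quadrature identity $Q_i^k(\omega_x)=\int_{I_i}\omega_x\,\mathrm{d}x$. (Your index bookkeeping is in fact slightly cleaner than the paper's, which formally extends the recursion to a phantom value $\left( M^*\omega\right)_{i,k+1}$; the algebra is identical.)

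One correction to your closing remark, though: the degree-$(2k-1)$ exactness of \cref{assump:subdivision} is \emph{not} what this step requires. Since $\omega\in\mathbb{P}^k(I_i)$ gives $\omega_x\in\mathbb{P}^{k-1}(I_i)$, the identity $Q_i^k(\omega_x)=\int_{I_i}\omega_x\,\mathrm{d}x=\omega_{i+\hf}^{-}-\omega_{i-\hf}^{+}$ needs only exactness on $\mathbb{P}^{k-1}(I_i)$, which is precisely the standing assumption the paper imposes on every quadrature $Q_i^k$ immediately after \eqref{eq:quadrature}. So the "generic" quadrature does suffice for the flux-side manipulation you singled out as the crux; the full strength of \cref{assump:subdivision} is needed elsewhere, namely to guarantee that $\ip{\cdot}{\cdot}_*$ appearing on the left-hand side is a genuine inner product (\cref{thm:subdivision}), not for the boundary-plus-top-node rewriting of $M_{i,k}$.
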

\begin{proof}
    We observe that 
    \begin{equation}\label{eq:x_p_0}
        \begin{aligned}
            \sum_{j=0}^{k} & \left( M^* \omega \right)_{i,j} (\hat{f}_{i,j}-\hat{f}_{i,j+1}) = \sum_{j=0}^{k}\left( M^* \omega \right)_{i,j}\hat{f}_{i,j} - \sum_{j=1}^{k+1}\left( M^* \omega \right)_{i,j-1}\hat{f}_{i,j} \\
            = & \sum_{j=1}^{k+1}\hat{f}_{i,j}\left( \left( M^* \omega \right)_{i,j} - \left( M^* \omega \right)_{i,j-1} \right)  + \left( M^* \omega \right)_{i,0}\hat{f}_{i-\hf}-\left( M^* \omega \right)_{i,k+1}\hat{f}_{i+\hf}.
        \end{aligned}
    \end{equation}
    According to \cref{def:Mstar},  we have 
    \begin{align*}
    	& \left( M^* \omega \right)_{i,0} = \omega_{i-\frac{1}{2}}^{+}+ A_{i,0}\omega_x(x_{i,0}), \qquad \left( M^* \omega \right)_{i,j} - \left( M^* \omega \right)_{i,j-1} = A_{i,j}\omega_x(x_{i,j}), 
    	\\
        & \left( M^* \omega \right)_{i,k+1} = \omega_{i-\frac{1}{2}}^{+} + Q_i^k(\omega_x) - A_{i,k+1}\omega_x(x_{i,k+1}) = \omega_{i+\frac{1}{2}}^{-} - A_{i,k+1}\omega_x(x_{i,k+1}),
    \end{align*}
    for $\omega \in \mathbb{V}^k$. Thus we obtain \eqref{eq:dgr-1DsemiSV} by \eqref{eq:1DsemiSV-G} and \eqref{eq:x_p_0}.
\end{proof}

Now, let us understand the novel equivalent form \eqref{eq:dgr-1DsemiSV} by comparing it with the conventional semi-discrete DG scheme \eqref{eq:1DsemiDG}. 
Since $\hat{f}_{i,j}$ is a high-order approximation to the flux $f(u_h)$ at $x_{i,j}$,  $Q_i^k\left( \hat{f} \omega_x \right)$ is essentially a high-order approximation to   $\ipi{f\left(u_h\right)}{\omega_x}$. Note that \eqref{eq:Mstar-difference} and \eqref{eq:propipstar_1} imply that $\ip{\cdot}{\cdot}_*$ is a high-order accurate approximation of the standard $L^2$-inner product on $\mathbb{V}^k$. 
In this sense, \eqref{eq:dgr-1DsemiSV} bridges the SV and DG methods, 
indicating that the SV method can be interpreted as a DG method that replaces $\ip{\cdot}{\cdot}$ with $\ip{\cdot}{\cdot}_*$. More significantly, this DG representation \eqref{eq:dgr-1DsemiSV} of the SV method provides a comprehensive framework for exploring the SV method. This framework allows us to systemically study the stability, optimal error estimates, and  the 
 oscillation elimination techniques of the SV method, based on the existing well-established techniques and results of the DG method.

\subsection{DG representations of fully-discrete RKSV and OESV methods}
To analyze the RKSV and OESV methods later, we give their DG representations 
for the following 1D linear advection equation:
\begin{equation}\label{eq:1Dlinear}
    u_t + \beta u_x = g(x,t),\quad x \in \Omega = [a,b],\quad t \in \left( 0, T \right],
\end{equation}
where the constant coefficient $\beta>0$ without loss of generality. In our analysis, the numerical fluxes will be taken as the upwind flux.

As direct consequences of \Cref{thm:dgr-1DsemiSV}, we have the following DG representations of the RKSV and OESV methods.

\begin{theorem}[DG representation of RKSV method]\label{thm:dgr-RKSV}
   The RKSV scheme for \eqref{eq:1Dlinear} can be equivalently reformulated as: for all $\omega \in \mathbb{V}^k$,
\begin{equation}\label{eq:linearRKSV}
    \ip{u_h^{n,\ell+1}}{\omega}_* = \sum_{0\leq \mykappa \leq \ell} \Big(c_{\ell \mykappa}\ip{u_h^{n,\mykappa}}{\omega}_* + \tau d_{\ell \mykappa}\big(\sv{u_h^{n,\mykappa}}{\omega} {+ \ip{g^{n,\mykappa}}{M^* \omega} } \big)\Big),
\end{equation}
where 
\begin{equation}\label{eq:SVoperator}
	\sv{v}{\omega} = \beta \left( \sum_i Q_i^k(v\omega_x) + \sum_i v_{i+\hf}^-\jump{\omega}_{i+\hf} - \sum_i A_{i,0}(\omega_x)_{i+\hf}^+ \jump{v}_{i+\hf} \right).
\end{equation}
\end{theorem}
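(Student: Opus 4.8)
The plan is to reduce everything to a single per-stage identity and then exploit the linearity of the RK update \eqref{eq:RKSV}. Testing \eqref{eq:RKSV} against $M^*\omega$ and using $\ip{\cdot}{\cdot}_* = \ip{\cdot}{M^*\cdot}$ from \cref{def:ipstar}, the terms $c_{\ell\kappa}\ip{u_h^{n,\kappa}}{\omega}_*$ are produced immediately, so the whole theorem follows once I show that, for the linear flux $f(u)=\beta u$ and the upwind numerical flux, each stage contribution satisfies
\[
\ip{L_f(v)}{\omega}_* = \sv{v}{\omega} + \ip{g}{M^*\omega}, \qquad \forall\, v,\omega \in \mathbb{V}^k .
\]
Here I read $L_f$ as the semi-discrete SV operator for \eqref{eq:1Dlinear} (advection together with the source), with $g$ taken at the appropriate stage time; linearity then lets me sum this identity against the coefficients $c_{\ell\kappa}$ and $\tau d_{\ell\kappa}$ to recover \eqref{eq:linearRKSV} exactly.

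To prove the identity I would first invoke \cref{thm:dgr-1DsemiSV}, extended to carry the source term of \eqref{eq:1Dlinear}. The source enters the control-volume scheme \eqref{eq:1DsemiSV} as $\ip{g}{\omega^*}$ on the right-hand side, which in the Galerkin form \eqref{eq:1DsemiSV-G} becomes $\ip{g}{M^*\omega}$; this matches the source term of the identity and can be separated off. The advective remainder is then precisely $\sum_i\big(Q_i^k(\hat f\omega_x)+\omega_{i-\hf}^+\hat f_{i-\hf}-\omega_{i+\hf}^-\hat f_{i+\hf}\big)$ furnished by \cref{thm:dgr-1DsemiSV}, so the task reduces to recasting this flux-quadrature sum as $\sv{v}{\omega}$ in \eqref{eq:SVoperator}.

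The core of the argument, and the step I expect to require the most care, is the interface bookkeeping. Substituting the upwind flux $\hat f_{i+\hf}=\beta v_{i+\hf}^-$ (valid since $\beta>0$), I split $Q_i^k(\hat f\omega_x)=\sum_{j=0}^{k+1}A_{i,j}\hat f_{i,j}\omega_x(x_{i,j})$ into interior nodes and the two endpoints. At every interior node and at the right endpoint $x_{i+\hf}$ the flux equals $\beta v$ evaluated inside $I_i$, so these reproduce $\beta Q_i^k(v\omega_x)$ verbatim; only the left endpoint $x_{i,0}=x_{i-\hf}$ differs, since there the upwind flux uses the neighbour value $\beta v_{i-\hf}^-$ rather than $\beta v_{i-\hf}^+$. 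The mismatch contributes $-\beta A_{i,0}\jump{v}_{i-\hf}(\omega_x)_{i-\hf}^+$, which after reindexing $i\mapsto i+1$ yields the third term $-\beta\sum_i A_{i,0}(\omega_x)_{i+\hf}^+\jump{v}_{i+\hf}$ of \eqref{eq:SVoperator}. For the boundary fluxes I reindex $\sum_i\omega_{i-\hf}^+\hat f_{i-\hf}=\sum_i\omega_{i+\hf}^+\hat f_{i+\hf}$, combine with $-\sum_i\omega_{i+\hf}^-\hat f_{i+\hf}$, and insert the upwind value to obtain $\sum_i\hat f_{i+\hf}\jump{\omega}_{i+\hf}=\beta\sum_i v_{i+\hf}^-\jump{\omega}_{i+\hf}$, the middle term of \eqref{eq:SVoperator}. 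All reindexings telescope cleanly under periodic boundary conditions, and the quasi-uniform subdivision keeps the endpoint weights consistent across interfaces.

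Collecting the three pieces gives $\ip{L_f(v)}{\omega}_*=\sv{v}{\omega}+\ip{g}{M^*\omega}$, and substituting this into the $M^*$-tested RK update reproduces \eqref{eq:linearRKSV} term by term. The only genuinely delicate point is the interface manipulation above, namely correctly tracking which cell supplies the upwind value at each node and aligning the jump sums through reindexing; everything else is an immediate consequence of linearity together with the semi-discrete DG representation of \cref{thm:dgr-1DsemiSV}.
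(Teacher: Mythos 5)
Your proposal is correct and follows essentially the same route as the paper, which states \cref{thm:dgr-RKSV} as a direct consequence of \cref{thm:dgr-1DsemiSV}: you simply work out that consequence explicitly, specializing the numerical flux in \eqref{eq:dgr-1DsemiSV} to the upwind flux $\hat f_{i+\hf}=\beta v_{i+\hf}^-$, separating the source term $\ip{g}{M^*\omega}$, and summing the per-stage identity against the RK coefficients. The only cosmetic caveat is in the jump term, where your reindexing actually produces the weight $A_{i+1,0}$ rather than $A_{i,0}$ at the interface $x_{i+\hf}$; this matches the paper's own notation in \eqref{eq:SVoperator} and is immaterial since the upwind condition \eqref{eq:UWcon} requires $A_{i,0}=0$ for all $i$, but attributing the identification to quasi-uniformity is not quite accurate.
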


\begin{theorem}[DG representation of OESV method]\label{thm:dgr-OESV}
    The OESV scheme for \eqref{eq:1Dlinear} can be equivalently reformulated as: for all $\omega \in \mathbb{V}^k$,
    \begin{subequations}\label{eq:linearOESV}
    \begin{align}
        \ip{\tilde{u}_h^{n,\ell+1}}{\omega}_* & = \sum_{0\leq \mykappa \leq \ell} \left(c_{\ell \mykappa}\ip{u_h^{n,\mykappa}}{\omega}_* + \tau d_{\ell \mykappa}\sv{u_h^{n,\mykappa}}{\omega} {+ \ip{g^{n,\mykappa}}{M^* \omega} } \right). \label{eq:linearRK} \\
        u_h^{n,\ell+1} &= \cF_\tau \tilde{u}_h^{n,\ell+1},\qquad 0\leq \ell \leq s-1. \label{eq:linearOE}
    \end{align}
\end{subequations}
\end{theorem}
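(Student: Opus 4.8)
The plan is to exploit the fact that the OESV scheme \eqref{eq:rk}--\eqref{eq:OEstep} is assembled from two ingredients: an ordinary RK inner-stage update producing the intermediate iterate $\tilde{u}_h^{n,\ell+1}$, followed by the post-processing map $\cF_\tau$. The RK update \eqref{eq:rk} is algebraically identical to the RK update of the plain RKSV scheme \eqref{eq:RKSV}; the sole difference is that its output is renamed $\tilde{u}_h^{n,\ell+1}$ rather than $u_h^{n,\ell+1}$. Since \cref{thm:dgr-RKSV} has already recast that update in energy-inner-product form, the representation \eqref{eq:linearRK} of the stage follows from the very same computation after this renaming, and the OE step \eqref{eq:OEstep} is then carried over unchanged as \eqref{eq:linearOE}. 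This is precisely why the statement is billed as a direct consequence of \cref{thm:dgr-1DsemiSV}.

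Concretely, I would first test the RK update \eqref{eq:rk} against $M^*\omega$ for an arbitrary $\omega \in \mathbb{V}^k$, i.e.\ pair it in the energy inner product $\ip{\cdot}{\cdot}_*$ of \cref{def:ipstar}. Linearity distributes this over the sum, converting the $c_{\ell\mykappa}u_h^{n,\mykappa}$ contributions into $c_{\ell\mykappa}\ip{u_h^{n,\mykappa}}{\omega}_*$ and leaving the spatial-operator contributions as $\ip{L_f(u_h^{n,\mykappa})}{\omega}_*=\ip{L_f(u_h^{n,\mykappa})}{M^*\omega}$. To each such term I apply \cref{thm:dgr-1DsemiSV}, specialized to the linear flux $f(u)=\beta u$ with the upwind numerical flux $\hat f_{i+\hf}=\beta (u_h)_{i+\hf}^-$; the quadrature term $Q_i^k(\hat f\omega_x)$ together with the two endpoint flux terms then reassemble, after summation over cells, into the bilinear form $\sv{\cdot}{\cdot}$ of \eqref{eq:SVoperator}, while the source $g$ enters through the pairing $\ip{g^{n,\mykappa}}{M^*\omega}$. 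Collecting these pieces gives \eqref{eq:linearRK}, and restating \eqref{eq:OEstep} verbatim gives \eqref{eq:linearOE}, so the two displays together are equivalent to the original OESV scheme.

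The only step carrying genuine content is this specialization of the general representation \eqref{eq:dgr-1DsemiSV} to the upwind linear case: one must verify that the endpoint sum $\sum_i\big(\omega_{i-\hf}^+\hat f_{i-\hf}-\omega_{i+\hf}^-\hat f_{i+\hf}\big)$ reassembles, by reindexing and the jump notation \eqref{jump}, into $\beta\sum_i (u_h)_{i+\hf}^-\jump{\omega}_{i+\hf}$, and that replacing the numerical flux by $\beta u_h$ at the upstream cell interface inside $Q_i^k$ produces the remaining term $-\beta\sum_i A_{i,0}(\omega_x)_{i+\hf}^+\jump{u_h}_{i+\hf}$. This is a reindexing and summation-by-parts bookkeeping exercise over the interfaces, of exactly the kind already performed for \cref{thm:dgr-1DsemiSV} and subsumed in \cref{thm:dgr-RKSV}. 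Because the OESV RK stage and the RKSV update coincide termwise, and because $\cF_\tau$ is defined identically (through the standard $L^2$ projections) in both schemes and is therefore left untouched, no new estimate is required; I expect no substantive obstacle beyond this bookkeeping.
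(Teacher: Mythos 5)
Your proposal is correct and follows exactly the route the paper takes: the paper gives no separate proof of \cref{thm:dgr-OESV}, declaring it (together with \cref{thm:dgr-RKSV}) a direct consequence of \cref{thm:dgr-1DsemiSV}, which is precisely your argument of testing the RK stage \eqref{eq:rk} against $M^*\omega$, invoking the DG representation \eqref{eq:dgr-1DsemiSV} with the upwind linear flux to reassemble the quadrature and interface terms into $\sv{\cdot}{\cdot}$, and carrying the OE step \eqref{eq:OEstep} over verbatim as \eqref{eq:linearOE}. Your identification of the interface bookkeeping (reindexing the endpoint fluxes into $\beta\sum_i (u_h)_{i+\hf}^-\jump{\omega}_{i+\hf}$ and the upwind substitution at $x_{i,0}$ into the $-\beta\sum_i A_{i,0}(\omega_x)_{i+\hf}^+\jump{u_h}_{i+\hf}$ term) as the only step with genuine content matches the structure of \eqref{eq:SVoperator}.
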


In \eqref{eq:linearRKSV} and \eqref{eq:linearOESV}, constants $\left\{ c_{\ell \kappa} \right\}$,  $\left\{ d_{\ell \kappa} \right\}$ are determined by the RK method with $\sum_\kappa c_{\ell \mykappa}=1$ for all $\ell$, as in \eqref{eq:RKSV} and \eqref{eq:rk}. If $g \equiv 0$, the source terms $g^{n,\mykappa} \equiv 0$ for all $\kappa$. Since $g^{n,\mykappa}$ may not belong to $\mathbb{V}^k$, we here use the notation $\ip{g^{n,\mykappa}}{M^* \omega}$ instead of $\ip{g^{n,\mykappa}}{ \omega}_*$, as  $\ip{\cdot}{\cdot}_*$ is an inner product only on $\mathbb{V}^k$.

The quadrature $Q_i^k(\cdot)$ in the SV method might involve the downwind information at each $x_{i-\hf}$, while our numerical fluxes are chosen as the upwind flux. 
If the following condition (referred to as the \textbf{upwind condition}) is satisfied: 
\begin{equation}\label{eq:UWcon}
     \exists ~ Q_i^k ~ \mbox{ such ~ that} ~ A_{i,0}=0\quad \forall i,
\end{equation}
then $Q_i^k(\cdot)$ only requires the upwind information at each $x_{i-\hf}$. When this upwind condition \eqref{eq:UWcon} holds, $\sv{v}{\omega}$ in  \eqref{eq:SVoperator} reduces to 
\begin{equation}\label{eq:SVDGequal}
    \sv{v}{\omega} = \beta \left( \sum_i \int_{I_i}v\omega_x \mathrm{d}x + \sum_i v_{i+\hf}^-\jump{\omega}_{i+\hf} \right)\quad \forall v,\omega \in \mathbb{V}^k.
\end{equation}
Note that \eqref{eq:SVDGequal} generally {\it cannot} be established if $v \not \in \mathbb{V}^k$. 

\section{Stability and optimal error estimates}
The DG representations of the RKSV and OESV methods motivate us to obtain a framework for analyzing 
 the stability and the optimal error estimates of the fully-discrete RKSV and OESV schemes. Consider the linear advection equation \eqref{eq:1Dlinear} with periodic boundary conditions. 

We will use $C$ to denote a non-negative constant independent of the time step size $\tau$ and the mesh size $h$. Without special marks or explicit statements, $C$ can take different values at different places. The inverse inequalities 
\begin{subequations} 
	\begin{align}
		\nm{\omega_x}_{L^{2}(I_i)}\leq& Ch_i^{-1}\nm{\omega}_{L^2(I_i)},\quad \forall \omega \in \mathbb{P}^k(I_i), \label{eq:normequiv-1}\\  
		\nm{\omega}_{L^\infty(I_i)}\leq& Ch_i^{-\frac{1}{2}}\nm{\omega}_{L^2(I_i)},\quad \forall \omega \in \mathbb{P}^k(I_i),\label{eq:normequiv}
	\end{align}
\end{subequations}
 imply 
\begin{equation}\label{eq:inequalityH}
    \| \sv{v}{\omega} \| \leq Ch^{-1} \nm{v}\nm{\omega} \leq Ch^{-1} \nm{v}_* \nm{\omega}_*.
\end{equation}

\subsection{Main results}
In our following analysis, the stability is established in terms of the energy norm $\nm{\cdot}_*$. 
\begin{theorem}[General stability of RKSV scheme]\label{thm:stability-RKSV}
    Assume that the upwind condition \eqref{eq:UWcon} holds. There exists $\gamma \geq 1$ and $C_{\mathrm{CFL}}^*>0$ such that, under the time step constraint $\frac{\tau}{h^\gamma} \leq C_{\mathrm{CFL}}^* $, the numerical solution of the RKSV scheme for \eqref{eq:1Dlinear} satisfies
    \begin{equation}\label{eq:stability-RKSV}
	\nm{u_h^{n+1}}_*^2\leq \left(1+C_{s}^*\tau\right)\nm{u_h^n}_*^2 + \hat{C}_{s}^* \tau\sum_{0\leq \ell<s}\nm{g^{n,\ell}}^2.
    \end{equation}
    Here $C_{s}^*$ and $\hat{C}_{s}^*$ are non-negative constants independent of $n$.
\end{theorem}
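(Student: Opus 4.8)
The plan is to run the classical fully-discrete RKDG energy argument, but with the standard $L^2$ inner product replaced throughout by the energy inner product $\ip{\cdot}{\cdot}_*$. The upwind condition \eqref{eq:UWcon} is what makes this feasible: it collapses the SV spatial functional \eqref{eq:SVoperator} into the genuine DG form \eqref{eq:SVDGequal}, so the RKSV scheme \eqref{eq:linearRKSV} becomes structurally identical to an RKDG scheme measured in $\nm{\cdot}_*$. Two facts then drive everything. First, the dissipativity of the bilinear form: setting $v=\omega$ in \eqref{eq:SVDGequal} and summing by parts over the periodic mesh, the interior and interface contributions telescope into
\[
\sv{v}{v} = -\tfrac{\beta}{2}\jump{v}^2 \le 0,
\]
so the upwind flux supplies nonpositive jump dissipation. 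Second, the boundedness \eqref{eq:inequalityH}, $|\sv{v}{\omega}|\le Ch^{-1}\nm{v}_*\nm{\omega}_*$. Because $\nm{\cdot}_*$ and $\nm{\cdot}$ are equivalent by \cref{prop:ipstar} and $M^*$ is bounded by \cref{prop:Mstar}, all constants obtained below are mesh-independent.

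The fully-discrete step is where I would introduce the temporal difference operator to organize the RK stages. Forming $\nm{u_h^{n+1}}_*^2=\ip{u_h^{n,s}}{u_h^{n,s}}_*$ and expanding recursively through \eqref{eq:linearRKSV}, using $\sum_{\mykappa}c_{\ell\mykappa}=1$ at each level, produces four types of terms: the baseline $\nm{u_h^n}_*^2$; diagonal dissipative pieces proportional to $\tau\,\sv{u_h^{n,\mykappa}}{u_h^{n,\mykappa}}\le 0$; non-dissipative cross and higher-order pieces $\tau^m$ ($m\ge 2$) arising from nested stage substitutions; and source contributions built from $\ip{g^{n,\mykappa}}{M^*\cdot}$. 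The diagonal pieces are discarded or retained for absorption. Each $\tau^m$ term is controlled by repeated use of \eqref{eq:inequalityH}, which gives a bound of order $C(\tau h^{-1})^m\nm{u_h^n}_*^2=C\tau\,(\tau^{m-1}h^{-m})\nm{u_h^n}_*^2$; under the constraint $\tau/h^\gamma\le C_{\mathrm{CFL}}^*$ with $\gamma$ chosen large enough these are all absorbed into $C\tau\nm{u_h^n}_*^2$, the lowest-order ($m=2$) term being the most restrictive. The source terms are handled by Young's inequality together with $\nm{M^*}\le C$, yielding the final $\hat C_s^*\tau\sum_{\ell}\nm{g^{n,\ell}}^2$. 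Collecting everything produces the claimed bound \eqref{eq:stability-RKSV}.

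I expect the genuine obstacle to be the sign-and-magnitude bookkeeping of the non-dissipative cross terms for a general $s$-stage, $r$th-order method. Unlike the clean semi-discrete identity $\sv{v}{v}\le 0$, these residual terms are not automatically dissipative, and for low- or even-order RK (forward Euler, two-stage Heun) they are mildly anti-dissipative; this is exactly why the theorem asserts only the existence of \emph{some} $\gamma\ge 1$ rather than fixing $\gamma=1$. Pinning down an admissible pair $(\gamma,C_{\mathrm{CFL}}^*)$ requires tracking how much of this anti-dissipation can be cancelled by the accumulated jump dissipation $-\tfrac{\beta}{2}\sum_\mykappa(\cdot)\,\tau\jump{u_h^{n,\mykappa}}^2$ generated across stages, which is the role of the temporal difference operator. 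Once the two structural properties $\sv{v}{v}\le 0$ and \eqref{eq:inequalityH} are secured, the remaining manipulations coincide verbatim with the established RKDG energy analysis, so the stability conclusions carry over directly to the RKSV setting (cf.\ \cref{rmk:DGSVsameresult}).
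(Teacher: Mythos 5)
Your proposal is correct and follows essentially the same route as the paper: under the upwind condition you pass to the DG form \eqref{eq:SVDGequal}, introduce the temporal difference operator to unroll the RK stages, keep the first-order jump term as dissipation via skew-symmetry, absorb all higher-order cross terms using the weak boundedness $\nm{\svtd}_*\leq C\tau/h$ under a CFL-type constraint, and bound the sources with $\nm{M^*}\leq C$ and the norm equivalence of \cref{prop:ipstar}. The only (minor) difference is that your self-contained brute-force absorption fixes $\gamma=2$, whereas the paper invokes the RKDG energy equality/inequality of Sun--Shu and Xu et al.\ to obtain some $\gamma_0\geq 2$ and then takes the sharper $\gamma=\gamma_0/(\gamma_0-1)\leq 2$, which permits larger time steps for higher-order RK methods.
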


The proof of \Cref{thm:stability-RKSV} will be presented in \Cref{proof:stability-RKSV}. 

\begin{remark}[General stability of OESV scheme]\label{rmk:stability-OESV}
    Assume that in \eqref{eq:linearRK}, $c_{\ell \kappa} \geq 0$, $d_{\ell \kappa} \geq 0$, and $d_{\ell \kappa} = 0$ when $c_{\ell \kappa}=0$, for all $\ell$ and $\kappa$. Under the upwind condition \eqref{eq:UWcon}, one can verify that \eqref{eq:stability-RKSV} holds for the numerical solutions of the OESV scheme \eqref{eq:linearOESV} when $\frac{\tau}{h^2} \leq C_{\mathrm{CFL}}^* $. See \Cref{sec:proof11} for details. 
\end{remark}

\begin{remark}\label{rmk:UWcon}
    The semi-discrete SV method for \eqref{eq:1Dlinear} with $g \equiv 0$ can be written as
    \begin{equation}\label{eq:linearsemiSV}
        \ip{\frac{\mathrm{d}}{\mathrm{d}t} u_h}{\omega}_* = \sv{u_h}{\omega}\quad \forall \omega \in \mathbb{V}^k.
    \end{equation}
    Since $\frac{1}{2}\frac{\mathrm{d}}{\mathrm{d}t} \nm{u_h}_*^2 = \sv{u_h}{u_h} = -\frac{\jump{u_h}^2}{2} \leq 0$ if and only if the upwind condition \eqref{eq:UWcon} is satisfied, the upwind condition actually ensures the stability in the energy norm $\nm{\cdot}_*$.
\end{remark}

The energy norm is equivalent to the $L^2$-norm in the sense of \eqref{eq:propipstar_2}.  We can establish the optimal error estimates in the standard $L^2$-norm $\nm{\cdot}$ as follows. 
\begin{theorem}[Optimal error estimates for OESV scheme]\label{thm:err-OESV}
		Consider the 1D $\mathbb P^k$-based OESV method on quasi-uniform meshes coupled with a $r$-th order RK method with $k\ge 1$ and $r \ge 2$. 
    Assume that the corresponding RKSV scheme (without the OE procedure) is stable in the sense of \eqref{eq:stability-RKSV}. If the exact solution $U(x,t)$ for \eqref{eq:1Dlinear} is sufficiently smooth and  $\nm{u_h^0-u(\cdot,0)}\leq Ch^{k+1}$, then the OESV scheme for \eqref{eq:1Dlinear} with $g \equiv 0$ admits the optimal error estimate:
	\begin{equation}\label{eq:err-OESV}
		\max_{0\leq n\leq\nn}\nm{u_h^n - U(\cdot,n\tau)} \leq C\left(h^{k+1} + \tau^r\right),
	\end{equation}
	whenever $\frac{\tau}{h^\gamma} \leq C_{\mathrm{CFL}}^* $ and $h \leq h_{**}$. Here, $\nn$ is the final time step with $T = \nn \tau$, and $h_{**}$ is a sufficiently small positive constant. 
\end{theorem}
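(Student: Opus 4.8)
The plan is to split the total error through the special interpolation $P^*$ of \cref{def:Pstar} and to estimate only its finite-element component via the stability machinery of \Cref{thm:stability-RKSV}, treating the nonlinear OE filter as an additional source. Writing $U^n := U(\cdot,n\tau)$, I would decompose the error as $u_h^n - U^n = \xi^n - \eta^n$, where $\xi^n := u_h^n - P^* U^n \in \mathbb{V}^k$ and $\eta^n := U^n - P^* U^n$. Since $U$ is smooth and the mesh is quasi-uniform, standard approximation theory yields $\nm{\eta^n} \leq C h^{k+1}$, together with analogous bounds on the temporal differences of $\eta$. Because the energy norm is equivalent to the $L^2$-norm by \eqref{eq:propipstar_2}, it then suffices to control $\nm{\xi^n}_*$, after which the triangle inequality delivers \eqref{eq:err-OESV}.

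Next I would derive the evolution recurrence for $\xi$. Inserting $P^* U^{n,\mykappa}$ into the DG representation \eqref{eq:linearOESV} of the OESV scheme and using that the smooth solution $U$ satisfies \eqref{eq:1Dlinear} produces a recurrence for $\xi^{n,\ell}$ with exactly the structure of the RKSV recurrence \eqref{eq:linearRKSV}, but carrying extra source contributions of three kinds: (i) the spatial consistency error from replacing $U$ by $P^* U$ inside $\ip{\cdot}{\cdot}_*$ and $\sv{\cdot}{\cdot}$, which the defining (Gauss--Radau type) orthogonality of $P^*$ is chosen to render $O(h^{k+1})$; (ii) the Runge--Kutta truncation error, which is $O(\tau^r)$ since the method is $r$-th order and $U$ is smooth; and (iii) the OE source $\tilde{u}_h^{n,\ell+1} - \cF_\tau \tilde{u}_h^{n,\ell+1}$, isolated exactly by \cref{prop:OESV-xi}. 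The point of this reformulation is that $\xi$ now satisfies an RKSV scheme driven by a computable source, so the stability estimate \eqref{eq:stability-RKSV} applies to it directly.

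The crux is to control the nonlinear OE source by a bootstrap argument. From the explicit form of $\cF_\tau$, the filter leaves the cell average untouched and attenuates the higher modes by factors $\mathrm{e}^{-\tau \sum_m \delta_K^m}$; using $1 - \mathrm{e}^{-x} \leq x$ gives $\nm{\tilde{u}_h^{n,\ell+1} - \cF_\tau \tilde{u}_h^{n,\ell+1}} \leq C\tau \max_{K,m}\delta_K^m(\tilde{u}_h^{n,\ell+1})\,\nm{\tilde{u}_h^{n,\ell+1}}$. Assuming inductively that $\nm{\xi^n} \leq h^{k+\frac{1}{2}}$, the solution $u_h$ stays close to $P^* U$, and since the interface jumps of $\partial^{\bm\alpha}(P^* U)$ with $|\bm\alpha|=m$ are $O(h^{k+1-m})$ for smooth $U$, the scale-invariant coefficients $\sigma_{e,K}^m$ in \eqref{eq:sig} remain bounded, so $\delta_K^m = O(h^{k})$ and the OE source is of higher order. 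Feeding the three source terms into \eqref{eq:stability-RKSV} applied to $\xi$ yields $\nm{\xi^{n+1}}_*^2 \leq (1+C\tau)\nm{\xi^n}_*^2 + C\tau\,(h^{k+1}+\tau^r)^2$, and a discrete Gronwall inequality (over $n \leq \nn$ with $T=\nn\tau$) gives $\nm{\xi^n}_* \leq C(h^{k+1}+\tau^r)$, which in turn restores the inductive hypothesis once $h \leq h_{**}$.

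I expect the main obstacle to be precisely this last step, since the OESV scheme is genuinely nonlinear and the damping coefficients $\delta_K^m(\tilde{u}_h)$ depend on the unknown numerical solution, so no linear error equation is available a priori. Making the bootstrap rigorous requires showing that the jump-based $\sigma_{e,K}^m$ stay $O(h^{k})$-small along the whole evolution—which hinges on the smoothness of $U$ and the induction-guaranteed closeness of $u_h$ to $P^* U$—and then using the time-step constraint $\tfrac{\tau}{h^\gamma} \leq C_{\mathrm{CFL}}^*$ together with the inverse inequalities behind \eqref{eq:inequalityH} to keep the accumulated source contributions from degrading the optimal order. Closing this loop without losing a power of $h$ or $\tau$ is the heart of the argument.
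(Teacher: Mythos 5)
Your overall strategy coincides with the paper's: decompose the error through $P^*$, rewrite the OESV scheme as an RKSV scheme for $\xi$ with the OE filter treated as a source (\cref{prop:OESV-xi}), and close via the stability bound \eqref{eq:stability-RKSV}, a discrete Gronwall argument, and a bootstrap on the closeness of $u_h$ to $P^*U$. The gap lies precisely in the step you yourself call the crux: your bound on the OE source. From $1-\mathrm{e}^{-x}\le x$ you obtain $\nm{\tilde u_h - \cF_\tau\tilde u_h}\le C\tau\max_{K,m}\delta_K^m\,\nm{\tilde u_h}$, and even granting $\delta_K^m=O(h^k)$, this makes $\nm{\cF^{n,\ell+1}}=O(h^k)\nm{\tilde u_h}=O(h^k)$, since $\nm{\tilde u_h}=O(1)$. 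Feeding a source of size $O(h^k)$ into \eqref{eq:stability-RKSV} and Gronwall yields only $\nm{\xi^n}\le C(h^k+\tau^r)$ --- one order short of optimal, and also too weak to restore your inductive hypothesis $\nm{\xi^n}\le h^{k+\hf}$, so the bootstrap does not close. (Moreover, $\delta_K^m=O(h^k)$ is itself optimistic: under your own induction hypothesis the jumps of $\partial^m\xi$ contribute $O(h^{k-1})$ to $\delta_K^m$.) What is actually needed --- and what \cref{prop:OE-1,prop:xi-tilde} supply, following the OEDG analysis of \cite{Peng2023OEDGOD} --- is the estimate $\nm{\cF^{n,\ell+1}}\le C\left(\nm{\tilde\xi^{n,\ell+1,*}}_*+h^{k+1}\right)$. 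To get it, the filter difference must be measured not against $\nm{\tilde u_h}$ but against the mean-deviation $\tilde u_h-\overline{\tilde u}_K$ cell by cell, after splitting $\tilde u_h = P^*U^{n,\ell+1}+\tilde\xi^{n,\ell+1,*}$: the smooth part has mean-deviation of size $O(h)$ in $L^2(\Omega)$, which combined with $\delta=O(h^k)$ supplies the missing power of $h$, while the $\tilde\xi$-part is absorbed into $C\nm{\tilde\xi}$ using local inverse estimates and the $L^\infty$ smallness assumption. Without this refinement, the recurrence $\nm{\xi^{n+1}}_*^2\le(1+C\tau)\nm{\xi^n}_*^2+C\tau\left(h^{k+1}+\tau^r\right)^2$ that you assert simply does not follow from the bound you wrote down.

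A second, subtler problem is the choice of bootstrap threshold. Your hypothesis $\nm{\xi^n}\le h^{k+\hf}$ cannot in general be restored from the conclusion $\nm{\xi^n}\le C\left(h^{k+1}+\tau^r\right)$: under the time-step condition $\tau\le C_{\mathrm{CFL}}^*h^\gamma$ one only knows $\tau^r\le Ch^{\gamma r}$, and $\gamma r\ge k+\hf$ is not guaranteed once $k\ge 2$ (for instance $r=2$, $\gamma$ near $1$, $k=3$). The paper instead bootstraps on the weaker, fixed threshold $\nm{\tilde\xi^{n,\ell,*}}_{L^\infty(\Omega)}\le h$, which is recovered from $\nm{\tilde\xi}\le C\left(h^{k+1}+h^{\gamma r}\right)$ through the inverse inequality \eqref{eq:normequiv} precisely because $k\ge 1>\hf$ and $\gamma r\ge r>\frac32$; this is exactly where the hypotheses $k\ge1$, $r\ge2$, and $h\le h_{**}$ enter the proof. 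You should replace your hypothesis by this $L^\infty$ threshold (or an equally weak one) for the induction over time steps to close.
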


The proof of \Cref{thm:err-OESV} is rather technical and will be given in \Cref{proof:err-OESV}.

\begin{remark}
	The proof of \cref{thm:err-OESV} implies that the above optimal error estimate also holds for the RKSV scheme \eqref{eq:linearRKSV} when $\frac{\tau}{h^\gamma} \leq C_{\mathrm{CFL}}^* $. The assumptions that $k\ge 1$, $r \ge 2$, and $h \leq h_{**}$ are not required for the optimal error estimate of the RKSV scheme without the OE procedure.
\end{remark}

\subsection{Proof of \cref{thm:stability-RKSV}} \label{proof:stability-RKSV}
Our proof of \cref{thm:stability-RKSV} will be based on the energy analysis techniques introduced for the RKDG scheme in \cite{Xu2020SuperconvergenceAO, Xu2020ErrorEO, Xu2019TheLS}. This process consists of three main steps:
\begin{enumerate}
    \item Establish a temporal difference operator that effectively bridges the temporal discretization and spatial discretization.
    \item Formulate an energy equality, which can then be used to derive an energy inequality.
    \item Estimate the source terms involved in the analysis.
\end{enumerate}
To deal with the inner product $\ip{\cdot}{\cdot}_*$ involved in the RKSV scheme \eqref{eq:linearRKSV}, our stability analysis will be conducted within the framework proposed by Sun and Shu in \cite{Sun2018StrongSO}.

\subsubsection{Temporal difference operator}
\begin{definition}\label{def:SV-td}
    We introduce the temporal difference operator $\svtd:\mathbb{V}^k \to \mathbb{V}^k$ defined by
    \begin{equation}\label{eq:SV-td}
        \ip{\svtd v}{\omega}_* = \tau \sv{v}{\omega}\quad \forall \omega \in \mathbb{V}^k.
    \end{equation}
\end{definition}
\begin{proposition}\label{prop:SV-td}
    There exists coefficients $\{ \alpha_\kappa \}$ with $\alpha_0=1$ and $\{ \alpha_{\ell, \kappa } \}$ such that the RKSV scheme \eqref{eq:linearRKSV} can be rewritten as
    \begin{equation}\label{eq:SV-RKform}
        u_h^{n+1} = \sum_{\kappa = 0}^{s} \alpha_\kappa \svtde^\kappa u_h^n + \tau \sum_{0 \leq \ell < s} \sum_{\kappa = 0}^{s-1-\ell} \alpha_{\ell, \kappa }\svtde^\kappa G^{n,\ell,*}.
    \end{equation}
    The source term $G^{n,\ell,*} \in \mathbb{V}^k$ is the projection defined as follows
    \begin{equation}\label{eq:SV-G}
        \ip{G^{n,\ell,*}}{\omega}_* = \sum_{0\leq \mykappa \leq \ell} d_{\ell \mykappa} \ip{g^{n,\mykappa}}{M^* \omega}\quad \forall \omega \in \mathbb{V}^k.
    \end{equation}
\end{proposition}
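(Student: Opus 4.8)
The plan is to turn the weak, inner-product form of the RK stages in \eqref{eq:linearRKSV} into a genuine operator recursion on $\mathbb{V}^k$, and then unroll that recursion by induction. First I would rewrite each stage using the temporal difference operator: by \cref{def:SV-td} we have $\tau \sv{u_h^{n,\mykappa}}{\omega} = \ip{\svtd u_h^{n,\mykappa}}{\omega}_*$, while by \eqref{eq:SV-G} the aggregated source contributes $\tau\sum_{0\leq\mykappa\leq\ell}d_{\ell\mykappa}\ip{g^{n,\mykappa}}{M^*\omega}=\tau\ip{G^{n,\ell,*}}{\omega}_*$. Substituting both into \eqref{eq:linearRKSV} gives, for every $\omega\in\mathbb{V}^k$,
\[
\ip{u_h^{n,\ell+1}}{\omega}_* = \sum_{0\leq\mykappa\leq\ell}\ip{\big(c_{\ell\mykappa}\,\mathrm{id}+d_{\ell\mykappa}\svtd\big)u_h^{n,\mykappa}}{\omega}_* + \tau\ip{G^{n,\ell,*}}{\omega}_*.
\]
Since $\ip{\cdot}{\cdot}_*$ is a genuine inner product on the finite-dimensional space $\mathbb{V}^k$ (\cref{thm:subdivision} under \cref{assump:subdivision}) and every term above lies in $\mathbb{V}^k$, the arbitrariness of $\omega$ lets me drop the inner product and obtain the operator-valued stage recursion $u_h^{n,0}=u_h^n$ and $u_h^{n,\ell+1}=\sum_{0\leq\mykappa\leq\ell}\big(c_{\ell\mykappa}\,\mathrm{id}+d_{\ell\mykappa}\svtd\big)u_h^{n,\mykappa}+\tau G^{n,\ell,*}$.

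Next I would unroll this recursion by induction on $\ell$, using that $\svtd$ is a single fixed linear operator, so its powers commute and polynomials in $\svtd$ form a commutative algebra. The inductive claim is that
\[
u_h^{n,\ell}=\sum_{\mykappa=0}^{\ell}\alpha_\mykappa^{(\ell)}\svtde^\mykappa u_h^n+\tau\sum_{0\leq m<\ell}\sum_{\mykappa=0}^{\ell-1-m}\alpha_{m,\mykappa}^{(\ell)}\svtde^\mykappa G^{n,m,*},
\]
the base case $\ell=0$ being immediate. In the inductive step each factor $c_{\ell\mykappa}\,\mathrm{id}+d_{\ell\mykappa}\svtd$ raises the $\svtd$-degree by at most one; since $u_h^{n,\mykappa}$ carries degree at most $\mykappa\le\ell$ in its $u_h^n$-part and degree at most $\mykappa-1-m$ in each $G^{n,m,*}$-part, the resulting degrees are bounded by $\ell+1$ and by $\mykappa-m\le\ell-m=(\ell+1)-1-m$, respectively, while the freshly added $\tau G^{n,\ell,*}$ supplies the $m=\ell$, degree-zero term. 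Setting $\ell=s$ and relabelling $\alpha_\mykappa:=\alpha_\mykappa^{(s)}$ and $\alpha_{\ell,\mykappa}:=\alpha_{\ell,\mykappa}^{(s)}$ then reproduces exactly \eqref{eq:SV-RKform}.

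Finally I would verify the normalization $\alpha_0=1$. The coefficient of $\svtde^0u_h^n=u_h^n$ in $u_h^{n,\ell+1}$ is fed only by the $c_{\ell\mykappa}\,\mathrm{id}$ parts acting on the degree-zero $u_h^n$-components of the previous stages (the $d_{\ell\mykappa}\svtd$ parts strictly raise the degree), so $\alpha_0^{(\ell+1)}=\sum_{0\leq\mykappa\leq\ell}c_{\ell\mykappa}\alpha_0^{(\mykappa)}$; together with $\alpha_0^{(0)}=1$ and the RK consistency condition $\sum_{0\leq\mykappa\leq\ell}c_{\ell\mykappa}=1$, an induction gives $\alpha_0^{(\ell)}=1$ for all $\ell$, hence $\alpha_0=1$.

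I expect the only genuinely delicate point to be the bookkeeping of the $\svtd$-degrees in the source terms, namely confirming that the upper limit $\mykappa\le\ell-1-m$ in \eqref{eq:SV-RKform} is respected rather than over-counted; the $u_h^n$-part is a routine Horner-type unrolling. Passing from the weak form to the operator recursion is safe precisely because $\svtd$ and $G^{n,\ell,*}$ are well-defined elements of $\mathbb{V}^k$ (Riesz representation against the nondegenerate $\ip{\cdot}{\cdot}_*$), so no projection or approximation error enters and the resulting identity \eqref{eq:SV-RKform} is exact.
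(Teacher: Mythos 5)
Your proposal is correct and takes essentially the same approach as the paper's proof: both convert \eqref{eq:linearRKSV} into the pointwise stage recursion $u_h^{n,\ell+1}=\sum_{0\leq \kappa \leq \ell}\left(c_{\ell \kappa}u_h^{n,\kappa}+d_{\ell \kappa}\svtd u_h^{n,\kappa}\right)+\tau G^{n,\ell,*}$ (using \cref{def:SV-td}, \eqref{eq:SV-G}, and the nondegeneracy of $\ip{\cdot}{\cdot}_*$ on $\mathbb{V}^k$) and then unroll it by induction starting from $u_h^{n,0}=u_h^n$. The paper leaves that induction unstated, whereas you additionally carry out the $\svtd$-degree bookkeeping for the upper summation limits and the verification that $\alpha_0=1$ via $\sum_{0\leq\kappa\leq\ell}c_{\ell\kappa}=1$; these details are correct.
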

\begin{proof}
    By \eqref{eq:linearRKSV} and \cref{def:SV-td}, we have 
    \begin{equation*}
        \ip{u_h^{n,\ell+1}}{\omega}_* = \ip{ \sum_{0\leq \mykappa \leq \ell} \left( c_{\ell \mykappa}u_h^{n,\mykappa} + d_{\ell \mykappa}\svtd u_h^{n,\mykappa} \right)}{\omega}_* + \tau \sum_{0\leq \mykappa \leq \ell} d_{\ell \mykappa}\ip{g^{n,\mykappa}}{M^* \omega},
    \end{equation*}
    for every $\omega \in \mathbb{V}^k$. Since $u_h^{n,0}=u_h^n$, we can verify \eqref{eq:SV-RKform} by induction.
\end{proof}

When the upwind condition \eqref{eq:UWcon} holds, $\svtd$ satisfies several the following  properties, which are crucial to the stability of the RKSV scheme \eqref{eq:linearRKSV}. 
\begin{proposition}[Properties of $\svtd$]\label{prop:SVtd-prop}
    If the upwind condition \eqref{eq:UWcon} is satisfied, the temporal difference operator $\svtd$ satisfies the following properties:
    \begin{enumerate}
        \item The skew-symmetric property:
        \begin{equation}\label{eq:SV-sksymprop}
            \ip{\svtd v}{\omega}_* + \ip{v}{\svtd \omega}_* = -\tau \ipjump{v}{\omega}\quad \forall v, \omega \in \mathbb{V}^k.
        \end{equation}
        \item The non-positive property: for any symmetric semi-positive definite matrix $\{ \theta_{\kappa, \kappa_0} \}$, if $\{ v_{\kappa} \}\subset \mathbb{V}^k$, then
        \begin{equation}\label{eq:SV-nonpprop}
            \sum_{\kappa, \kappa_0} \theta_{\kappa, \kappa_0} \ip{\svtd v_\kappa}{v_{\kappa_0}}_* = -\frac{1}{2}\sum_{\kappa, \kappa_0} \theta_{\kappa, \kappa_0}\ipjump{v_\kappa}{v_{\kappa_0}} \leq 0.
        \end{equation}
        \item The weak boundedness:
        \begin{equation}\label{eq:SV-wkbdd}
            \ip{\svtd v}{\omega}_* \leq C \frac{\tau}{h} \nm{v}_* \nm{\omega}_*\quad \forall v, \omega \in \mathbb{V}^k.
        \end{equation}
    \end{enumerate}
\end{proposition}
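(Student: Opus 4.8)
The plan is to derive all three properties from the explicit reduced form of $\mathcal{H}^*$ available under the upwind condition, namely \eqref{eq:SVDGequal}, together with the symmetry of the inner product $\ip{\cdot}{\cdot}_*$ and the defining relation \eqref{eq:SV-td}. The skew-symmetric identity \eqref{eq:SV-sksymprop} is the core computation, and the other two properties will follow from it with little extra work.

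First I would establish \eqref{eq:SV-sksymprop}. Since $\ip{\cdot}{\cdot}_*$ is an inner product on $\mathbb{V}^k$ (\cref{thm:subdivision}), it is symmetric, so $\ip{v}{\svtd\omega}_* = \ip{\svtd\omega}{v}_* = \tau\sv{\omega}{v}$, and hence the left-hand side equals $\tau\big(\sv{v}{\omega} + \sv{\omega}{v}\big)$. Because both $v,\omega \in \mathbb{V}^k$, I may invoke the reduced form \eqref{eq:SVDGequal}. Adding the two copies, the volume integrals combine via the product rule as $\int_{I_i}(v\omega_x + \omega v_x)\,\mathrm{d}x = \int_{I_i}(v\omega)_x\,\mathrm{d}x = (v\omega)_{i+\hf}^- - (v\omega)_{i-\hf}^+$; summing over $i$ telescopes under the periodic boundary condition, so that $\sum_i\big[(v\omega)_{i+\hf}^- - (v\omega)_{i-\hf}^+\big]$ reduces to a sum over interfaces of $v_{i+\hf}^-\omega_{i+\hf}^- - v_{i+\hf}^+\omega_{i+\hf}^+$. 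Combining this with the surviving boundary terms $v_{i+\hf}^-\jump{\omega}_{i+\hf} + \omega_{i+\hf}^-\jump{v}_{i+\hf}$ at each node $x_{i+\hf}$, a short algebraic simplification collapses the total at each interface to exactly $-\jump{v}_{i+\hf}\jump{\omega}_{i+\hf}$ (up to the factor $\beta$, handled as in \cref{rmk:UWcon}). Summing over all interfaces and recalling \eqref{eq:SV-td} then yields \eqref{eq:SV-sksymprop}.

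The non-positive property \eqref{eq:SV-nonpprop} follows purely algebraically from \eqref{eq:SV-sksymprop}. Using the symmetry of $\{\theta_{\kappa,\kappa_0}\}$ together with that of $\ip{\cdot}{\cdot}_*$, I would symmetrize the double sum to write $\sum_{\kappa,\kappa_0}\theta_{\kappa,\kappa_0}\ip{\svtd v_\kappa}{v_{\kappa_0}}_* = \tfrac{1}{2}\sum_{\kappa,\kappa_0}\theta_{\kappa,\kappa_0}\big(\ip{\svtd v_\kappa}{v_{\kappa_0}}_* + \ip{v_\kappa}{\svtd v_{\kappa_0}}_*\big)$ and apply \eqref{eq:SV-sksymprop} to each pair, producing $-\tfrac{1}{2}\sum_{\kappa,\kappa_0}\theta_{\kappa,\kappa_0}\ipjump{v_\kappa}{v_{\kappa_0}}$. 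To see this is non-positive, I would expand $\ipjump{v_\kappa}{v_{\kappa_0}} = \sum_i \jump{v_\kappa}_{i+\hf}\jump{v_{\kappa_0}}_{i+\hf}$ and, for each fixed interface $x_{i+\hf}$, recognize the inner sum as $\mathbf{b}_i^{\top}\Theta\,\mathbf{b}_i$, where $\mathbf{b}_i = \big(\jump{v_\kappa}_{i+\hf}\big)_\kappa$ and $\Theta = \{\theta_{\kappa,\kappa_0}\}$; semi-positive definiteness makes each such term nonnegative, so the full sum over $i$ is nonnegative and the quadratic form is $\le 0$.

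Finally, the weak boundedness \eqref{eq:SV-wkbdd} is immediate from \eqref{eq:SV-td} and the boundedness estimate \eqref{eq:inequalityH}: $\ip{\svtd v}{\omega}_* = \tau\sv{v}{\omega} \le \tau\,\| \sv{v}{\omega} \| \le C\tfrac{\tau}{h}\nm{v}_*\nm{\omega}_*$. I expect the main obstacle to be the bookkeeping in the skew-symmetric identity, specifically telescoping the volume integrals correctly under periodicity and verifying that the interface contributions collapse exactly to the jump product $-\jump{v}_{i+\hf}\jump{\omega}_{i+\hf}$; once \eqref{eq:SV-sksymprop} is in hand, the remaining two properties are essentially formal.
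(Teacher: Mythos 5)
Your proposal is correct and follows essentially the same route as the paper: the paper likewise obtains the weak boundedness \eqref{eq:SV-wkbdd} directly from \eqref{eq:inequalityH}, and derives the first two properties by noting that the upwind condition reduces $\sv{\cdot}{\cdot}$ to the standard DG bilinear form \eqref{eq:SVDGequal}, after which it simply cites the RKDG analysis of Xu et al.\ rather than writing out the computation. Your telescoping-plus-interface algebra (and the symmetrization argument with $\mathbf{b}_i^{\top}\Theta\,\mathbf{b}_i \ge 0$) is exactly the content of that cited argument, so your version is just a self-contained rendering of the paper's proof.
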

\begin{proof}
    The weak boundedness follows from \eqref{eq:inequalityH}. Since \eqref{eq:SVDGequal} holds under the upwind condition, the other two properties can be proven by following the RKDG analysis in \cite[Section 3.1.2]{Xu2019TheLS}.
\end{proof}

\subsubsection{Energy equality and inequality}
Under the framework of Sun and Shu \cite{Sun2018StrongSO}, we introduce the notation
\begin{equation*}\label{eq:Rsv}
    R_s^*(v) := \sum_{\kappa = 0}^{s} \alpha_\kappa \svtde^\kappa v\quad \forall v \in \mathbb{V}^k,
\end{equation*}
and derive the following energy equality for the RKSV scheme. 
\begin{proposition}[Energy equality]\label{prop:SV-Eiequal}
    There exists constants $\{ \beta_\kappa \}$ and $\{ \beta_{\kappa, \kappa_0} \}$ with $\beta_{\kappa,\kappa_0} = \beta_{\kappa_0,\kappa}$ such that for any $v \in \mathbb{V}^k$,
    \begin{equation}\label{eq:SV-Eequal}
        \nm{R_s^*(v)}_*^2 = \nm{v}_*^2 + \sum_{\kappa = 1}^{s}\beta_\kappa \nm{\svtde^\kappa v}_*^2 - \tau \sum_{0\leq \kappa,\kappa_0 <s} \beta_{\kappa,\kappa_0} \ipjump{\svtde^\kappa v}{\svtde^{\kappa_0} v},
    \end{equation}
where the semi-inner product $\ipjump{\cdot}{\cdot}$ is defined in \eqref{jump}. 
\end{proposition}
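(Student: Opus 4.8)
The plan is to expand $\nm{R_s^*(v)}_*^2=\ip{R_s^*(v)}{R_s^*(v)}_*$ directly and reduce every cross term to diagonal norms and jump terms by repeatedly invoking the skew-symmetric property \eqref{eq:SV-sksymprop} from \cref{prop:SVtd-prop}. Writing $w_\kappa := \svtde^\kappa v$ for brevity and using $\alpha_0=1$ from \cref{prop:SV-td}, the definition of $R_s^*$ gives
\begin{equation*}
\nm{R_s^*(v)}_*^2 = \sum_{\kappa,\kappa_0=0}^{s} \alpha_\kappa \alpha_{\kappa_0}\, \ip{w_\kappa}{w_{\kappa_0}}_*.
\end{equation*}
The whole argument then rests on two elementary identities for $f(\kappa,\kappa_0) := \ip{w_\kappa}{w_{\kappa_0}}_*$: first, $f$ is symmetric because $\ip{\cdot}{\cdot}_*$ is an inner product; second, applying \eqref{eq:SV-sksymprop} with $v\mapsto w_\kappa$, $\omega \mapsto w_{\kappa_0}$ and using $\svtd w_\kappa = w_{\kappa+1}$ yields the recurrence
\begin{equation*}
f(\kappa+1,\kappa_0) + f(\kappa,\kappa_0+1) = -\tau \ipjump{w_\kappa}{w_{\kappa_0}}.
\end{equation*}

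First I would exploit this recurrence to collapse each $f(\kappa,\kappa_0)$ toward the diagonal $\kappa=\kappa_0$. Starting from $f(\kappa,\kappa_0)$ with, say, $\kappa\le\kappa_0$, one step replaces it by $-f(\kappa+1,\kappa_0-1)$ plus a jump term, simultaneously raising the smaller index and lowering the larger one. Iterating $p := \lfloor (\kappa_0-\kappa)/2\rfloor$ times produces a sign $(-1)^p$ together with an accumulated sum of semi-inner products $\ipjump{w_a}{w_b}$ whose indices satisfy $a+b=\kappa+\kappa_0-1$. Two cases then arise. If $\kappa+\kappa_0$ is even the iteration terminates at the genuine diagonal term $\nm{w_{(\kappa+\kappa_0)/2}}_*^2$; if $\kappa+\kappa_0$ is odd it terminates at a term of the form $f(j,j+1)$, and setting $\kappa=\kappa_0=j$ in the recurrence together with the symmetry of $f$ gives $2f(j,j+1)=-\tau\ipjump{w_j}{w_j}$, so this leftover is itself a pure jump term. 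In either case $f(\kappa,\kappa_0)$ is expressed as a multiple of a single diagonal norm $\nm{w_j}_*^2$ (present only when $\kappa+\kappa_0$ is even) plus a linear combination of the $\ipjump{w_a}{w_b}$.

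Next I would reassemble the double sum. Grouping the contributions that survive as diagonal norms, the coefficient of $\nm{\svtde^j v}_*^2$ is $\beta_j := \sum_{\kappa+\kappa_0=2j}\alpha_\kappa\alpha_{\kappa_0}(-1)^{|\kappa-\kappa_0|/2}$; the case $j=0$ forces $\kappa=\kappa_0=0$ and, since $\alpha_0=1$, contributes exactly $\nm{v}_*^2$, which is why it is split off from the sum over $\kappa\ge1$ in \eqref{eq:SV-Eequal}. All remaining terms carry the factor $\tau$ and the jump structure $\ipjump{\svtde^\kappa v}{\svtde^{\kappa_0} v}$; collecting them defines the constants $\beta_{\kappa,\kappa_0}$, whose symmetry $\beta_{\kappa,\kappa_0}=\beta_{\kappa_0,\kappa}$ follows from the symmetry of $\ipjump{\cdot}{\cdot}$ and of the coefficient pattern $\alpha_\kappa\alpha_{\kappa_0}$ (symmetrizing if necessary). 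A short check of the index ranges shows that the largest index appearing in any jump term is $\max(\kappa,\kappa_0)-1\le s-1$, matching the range $0\le\kappa,\kappa_0<s$ in the statement, since the only pair with $\max(\kappa,\kappa_0)=s$ that contributes a diagonal term is $(s,s)$, which produces no jump.

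The main obstacle I anticipate is organizational rather than conceptual: keeping rigorous track of the alternating signs $(-1)^p$ and of the precise set of accumulated jump terms as each $f(\kappa,\kappa_0)$ is collapsed, so that the final coefficients $\beta_\kappa$ and $\beta_{\kappa,\kappa_0}$ are well defined, symmetric, and correctly indexed. This bookkeeping is exactly what the Sun--Shu framework \cite{Sun2018StrongSO} is designed to package, so I would invoke it to organize the induction rather than tracking every jump term by hand; the only structural property of $\svtd$ that this framework requires is the near-skew-symmetry \eqref{eq:SV-sksymprop}, which \cref{prop:SVtd-prop} supplies under the upwind condition \eqref{eq:UWcon}.
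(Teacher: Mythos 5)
Your proposal is correct and follows essentially the same route as the paper: expand $\nm{R_s^*(v)}_*^2$ into the double sum $\sum_{\kappa,\kappa_0}\alpha_\kappa\alpha_{\kappa_0}\ip{\svtde^\kappa v}{\svtde^{\kappa_0}v}_*$, use $\alpha_0=1$, and inductively apply the skew-symmetric property \eqref{eq:SV-sksymprop} to reduce cross terms to diagonal norms and jump terms. The paper states this in two sentences and leaves the bookkeeping implicit; your collapse-toward-the-diagonal recurrence is precisely that bookkeeping made explicit, with correct signs, coefficients, and index ranges.
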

\begin{proof}
    The definition of $R_s^*$ yields that for any $v \in \mathbb{V}^k$,
    \begin{equation}\label{eq:SV-Eequal_1}
        \nm{R_s^*(v)}_*^2 = \sum_{0 \leq \kappa,\kappa_0 \leq s}\alpha_\kappa \alpha_{\kappa_0} \ip{\svtde^\kappa v}{\svtde^{\kappa_0} v}_*.
    \end{equation}
    Since $\alpha_0=1$ (\cref{prop:SV-td}), we obtain \eqref{eq:SV-Eequal} by inductively applying the skew-symmetric property \eqref{eq:SV-sksymprop} to the right hand side of  equality \eqref{eq:SV-Eequal_1}.
\end{proof}

With \cref{prop:SV-Eiequal} and the properties of $\svtd$ concluded in \cref{prop:SVtd-prop}, we can adopt the RKDG analysis approach in \cite[Section 3.4]{Xu2019TheLS} to obtain the following energy inequality for the RKSV method. 
\begin{corollary}[Energy inequality]\label{cor:SV-Eiequal}
    There exists $\gamma_0 \geq 2$ such that:
   \begin{equation}\label{eq:SV-Eiequal}
        \nm{R_s(v)}_*^2 \leq \left( 1+C \left(\frac{\tau}{h}\right)^{\gamma_0} \right)\nm{v}_*^2 \quad \forall v \in \mathbb{V}^k,
    \end{equation}
    whenever $\frac{\tau}{h}\leq C_{\mathrm{CFL}}^*$ for some $C_{\mathrm{CFL}}^*>0$. Here, the constant $C$ can be $0$.
\end{corollary}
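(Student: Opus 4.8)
The plan is to transcribe the fully-discrete RKDG energy analysis of \cite{Xu2019TheLS} directly into the SV setting, exploiting the fact that the three structural properties of $\svtd$ collected in \cref{prop:SVtd-prop} (skew-symmetry, non-positivity, and weak boundedness) are exact analogues of the corresponding DG operator properties, with $\ip{\cdot}{\cdot}_*$ playing the role of the $L^2$-inner product. The starting point is the energy equality \eqref{eq:SV-Eequal} of \cref{prop:SV-Eiequal}, which expresses $\nm{R_s^*(v)}_*^2$ as $\nm{v}_*^2$ plus two families of correction terms: the polynomial-norm contributions $\sum_{\kappa=1}^{s}\beta_\kappa \nm{\svtde^\kappa v}_*^2$ and the jump terms $-\tau\sum_{0\le\kappa,\kappa_0<s}\beta_{\kappa,\kappa_0}\ipjump{\svtde^\kappa v}{\svtde^{\kappa_0}v}$. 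The goal is to show that both families are bounded by $C(\tau/h)^{\gamma_0}\nm{v}_*^2$ with $\gamma_0\ge2$, so that under the CFL restriction $\tau/h\le C_{\mathrm{CFL}}^*$ everything beyond $\nm{v}_*^2$ is a controlled perturbation.

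First I would dispatch the polynomial-norm corrections. Taking $\omega=\svtd v$ in the weak boundedness bound \eqref{eq:SV-wkbdd} gives $\nm{\svtd v}_*\le C(\tau/h)\nm{v}_*$, and iterating yields $\nm{\svtde^\kappa v}_*\le (C\tau/h)^\kappa\nm{v}_*$. Hence $\sum_{\kappa=1}^{s}\beta_\kappa\nm{\svtde^\kappa v}_*^2\le C(\tau/h)^2\nm{v}_*^2$, since every summand carries $\kappa\ge1$ and therefore at least the power $(\tau/h)^2$.

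Next comes the jump term, which I expect to be the only delicate point. The danger is that the $\kappa=\kappa_0=0$ contribution $-\tau\beta_{0,0}\jump{v}^2$ scales only like $(\tau/h)^1$ after applying the inverse inequality $\jump{w}\le Ch^{-1/2}\nm{w}$ (a consequence of the trace bound \eqref{eq:normequiv} together with the norm equivalence \eqref{eq:propipstar_2}), which would degrade $\gamma_0$ to $1$. The resolution, inherited from the Sun--Shu framework \cite{Sun2018StrongSO}, is that the structure of the RK coefficients forces this leading diagonal term to appear with a non-positive sign, equivalently the dominant part of the coefficient matrix $\{\beta_{\kappa,\kappa_0}\}$ is positive semidefinite, so that by the non-positivity property \eqref{eq:SV-nonpprop} it contributes $\le0$ and may simply be discarded. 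Every remaining jump term has at least one index $\ge1$; applying Cauchy--Schwarz for the positive semidefinite semi-inner product $\ipjump{\cdot}{\cdot}$ together with $\jump{\svtde^\kappa v}\le Ch^{-1/2}\nm{\svtde^\kappa v}\le Ch^{-1/2}(C\tau/h)^\kappa\nm{v}_*$ shows that a term with indices $(\kappa,\kappa_0)$ is bounded by $C(\tau/h)^{1+\kappa+\kappa_0}\nm{v}_*^2\le C(\tau/h)^2\nm{v}_*^2$ whenever $\kappa+\kappa_0\ge1$.

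Collecting the two estimates gives $\nm{R_s^*(v)}_*^2\le\left(1+C(\tau/h)^{\gamma_0}\right)\nm{v}_*^2$ with $\gamma_0\ge2$, valid under $\tau/h\le C_{\mathrm{CFL}}^*$. The assertion that $C$ may be taken as $0$ corresponds to the favorable case in which all correction terms in \eqref{eq:SV-Eequal} are individually non-positive (for instance when $\{\beta_{\kappa,\kappa_0}\}$ is positive semidefinite and $\beta_\kappa\le0$), so that the energy equality already yields the contractive bound $\nm{R_s^*(v)}_*^2\le\nm{v}_*^2$ without any CFL penalty. The isolation of the sign of the leading $\jump{v}^2$ term, rather than any of the routine norm estimates, is the crux; and it is precisely the part imported verbatim from the established RKDG stability theory \cite{Xu2019TheLS} once $\ip{\cdot}{\cdot}_*$ and $\svtd$ are substituted for their $L^2$ counterparts.
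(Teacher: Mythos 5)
Your proposal is correct and takes essentially the same route the paper prescribes: the paper omits this proof entirely, asserting that it ``directly follows from the RKDG results'' of \cite{Xu2019TheLS} once the energy equality (\cref{prop:SV-Eiequal}) and the three properties of $\svtd$ (\cref{prop:SVtd-prop}) are available, and your write-up is precisely that transcription --- the consistency of the RK coefficients ($\alpha_0\alpha_1=1$) gives the favorable sign of the leading $-\tau\beta_{0,0}\ipjump{v}{v}$ term, while iterated weak boundedness and the trace/inverse inequalities control all remaining corrections by $C(\tau/h)^{2}\nm{v}_*^2$. The only nuance worth noting is that your generic estimate establishes $\gamma_0=2$ for every consistent RK scheme, which fully proves the corollary as stated (including the $C=0$ monotone cases), whereas the cited RKDG theory additionally supplies larger scheme-dependent values of $\gamma_0$, which is what makes the time-step restriction $\tau/h^{\gamma}\leq C_{\mathrm{CFL}}^*$ with $\gamma=\gamma_0/(\gamma_0-1)$ in \cref{thm:stability-RKSV} milder than $\tau\lesssim h^2$.
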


Thanks to the established relation between the RKDG and RKSV methods, the proof of \Cref{cor:SV-Eiequal} directly follows from the RKDG results in \cite{Xu2019TheLS} and is thus omitted. 
The energy inequality \eqref{eq:SV-Eiequal} implies that the RKSV scheme is stable with respect to the energy norm $\nm{\cdot}_*$ in a weak sense as follows.
\begin{theorem}[Weak($\gamma_0$) stability of RKSV scheme]
    The numerical solution of the RKSV scheme \eqref{eq:linearRKSV} with $g \equiv 0$ satisfies
    \begin{equation}
        \nm{u_h^{n+1}}_*^2 \leq \left( 1+C \left(\frac{\tau}{h}\right)^{\gamma_0} \right)\nm{u_h^n}_*^2,
    \end{equation}
    for some $\gamma_0 \geq 2$ whenever $\frac{\tau}{h}\leq C_{\mathrm{CFL}}^*$. Here, $C$ is independent of $n$ and can be $0$.
\end{theorem}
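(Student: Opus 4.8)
The plan is to obtain this theorem as an immediate corollary of the representation in \cref{prop:SV-td} together with the energy inequality in \cref{cor:SV-Eiequal}. First I would specialize the reformulation \eqref{eq:SV-RKform} to the source-free case. When $g \equiv 0$, every $g^{n,\mykappa}$ vanishes, so the defining relation \eqref{eq:SV-G} forces each projected source term $G^{n,\ell,*}$ to be zero; consequently the entire second summation in \eqref{eq:SV-RKform} drops out, leaving
\begin{equation*}
    u_h^{n+1} = \sum_{\kappa = 0}^{s} \alpha_\kappa \svtde^\kappa u_h^n = R_s^*(u_h^n).
\end{equation*}

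Next I would apply the energy inequality \eqref{eq:SV-Eiequal} with the choice $v = u_h^n$. Since the upwind condition \eqref{eq:UWcon} is in force (so that \cref{prop:SVtd-prop} supplies the skew-symmetry and non-positivity properties underpinning \cref{cor:SV-Eiequal}), and since $\frac{\tau}{h}\leq C_{\mathrm{CFL}}^*$ by hypothesis, the corollary yields
\begin{equation*}
    \nm{u_h^{n+1}}_*^2 = \nm{R_s^*(u_h^n)}_*^2 \leq \left( 1 + C\left(\frac{\tau}{h}\right)^{\gamma_0} \right)\nm{u_h^n}_*^2
\end{equation*}
for the same $\gamma_0 \geq 2$, which is exactly the claimed bound. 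The constant $C$ is inherited directly from \cref{cor:SV-Eiequal} and is therefore independent of $n$, and it may indeed be taken to be $0$.

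There is essentially no remaining obstacle at this final stage: the theorem is merely the per-step reading of the already-established operator inequality \eqref{eq:SV-Eiequal}. The genuine analytical difficulty lies upstream, in proving the energy equality \eqref{eq:SV-Eequal} and then controlling the correction terms $\beta_\kappa\nm{\svtde^\kappa v}_*^2$ and the jump cross terms $\ipjump{\svtde^\kappa v}{\svtde^{\kappa_0} v}$ so that the quadratic form in the iterates $\svtde^\kappa v$ collapses to $\nm{v}_*^2$ up to the factor $1 + C(\tau/h)^{\gamma_0}$. That is precisely where the upwind condition and the Sun--Shu framework enter; in the present step I would simply invoke the completed corollary and record the conclusion.
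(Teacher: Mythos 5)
Your proposal is correct and matches the paper's own (implicit) argument exactly: with $g \equiv 0$ the source terms $G^{n,\ell,*}$ vanish by \eqref{eq:SV-G}, so \eqref{eq:SV-RKform} reduces to $u_h^{n+1} = R_s^*(u_h^n)$, and the bound follows by applying \cref{cor:SV-Eiequal} with $v = u_h^n$. The paper treats this theorem as an immediate consequence of the energy inequality in precisely this way, with all the real work residing upstream in \cref{prop:SV-Eiequal} and \cref{cor:SV-Eiequal}, as you note.
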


\begin{remark}\label{rmk:DGSVsameresult}
    From the proofs of \cref{prop:SV-td,prop:SV-Eiequal}, it is straightforward to observe that the coefficients $\{ \alpha_\kappa \}$ in \eqref{eq:SV-RKform} and $\{ \beta_\kappa \}$, $\{ \beta_{\kappa, \kappa_0} \}$ in \eqref{eq:SV-Eequal} \textit{are determined only by the RK time discretization}. Hence, if $g \equiv 0$, replacing the standard $L^2$-norm $\nm{\cdot}$ by $\nm{\cdot}_*$, all the existing stability results of the RKDG schemes in  \cite{Xu2020SuperconvergenceAO,Xu2020ErrorEO,Xu2019TheLS} are naturally extensible to the  RKSV schemes. For example, we have \Cref{monotone}.     For more stability results of the RKDG schemes, see \cite{Xu2019TheLS,Xu2020ErrorEO,Xu2020SuperconvergenceAO}.

\end{remark}   

 \begin{theorem}[Monotonicity stability of RKSV scheme]\label{monotone}
Suppose that the upwind condition \eqref{eq:UWcon} holds. Consider the RKSV scheme \eqref{eq:linearRKSV} coupled with an $r$th-order $r$-stage RK method and $r \equiv 3 \ (\rm{mod} \ 4)$. Whenever $\frac{\tau}{h}\leq C_{\mathrm{CFL}}^*$, the numerical solution of the RKSV scheme \eqref{eq:linearRKSV} with $g \equiv 0$ satisfies
\begin{equation}
	\nm{u_h^{n+1}}_*^2 \leq \nm{u_h^n}_*^2.
\end{equation}
\end{theorem}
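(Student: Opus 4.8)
The plan is to leverage \Cref{rmk:DGSVsameresult}, which asserts that the monotonicity stability results established for RKDG schemes in \cite{Xu2019TheLS, Xu2020ErrorEO, Xu2020SuperconvergenceAO} transfer verbatim to the RKSV setting once the standard $L^2$-norm $\nm{\cdot}$ is replaced by the energy norm $\nm{\cdot}_*$. The crucial structural fact enabling this transfer is that the coefficients $\{\alpha_\kappa\}$ in the reformulation \eqref{eq:SV-RKform} and the coefficients $\{\beta_\kappa\}$, $\{\beta_{\kappa,\kappa_0}\}$ in the energy equality \eqref{eq:SV-Eequal} depend \emph{only} on the RK time discretization, not on the spatial operator or the particular inner product. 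Thus the entire energy-analysis machinery carries over, provided the temporal difference operator $\svtd$ enjoys the same algebraic properties relative to $\ip{\cdot}{\cdot}_*$ that the RKDG temporal difference operator enjoys relative to $\ip{\cdot}{\cdot}$.

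First I would invoke \cref{prop:SV-td} to write $u_h^{n+1} = R_s^*(u_h^n)$ when $g\equiv 0$, reducing the claim to showing $\nm{R_s^*(u_h^n)}_*^2 \le \nm{u_h^n}_*^2$. Next I would apply the energy equality \eqref{eq:SV-Eequal} from \cref{prop:SV-Eiequal}, which gives
\begin{equation*}
    \nm{R_s^*(v)}_*^2 = \nm{v}_*^2 + \sum_{\kappa=1}^{s}\beta_\kappa \nm{\svtde^\kappa v}_*^2 - \tau\sum_{0\le\kappa,\kappa_0<s}\beta_{\kappa,\kappa_0}\ipjump{\svtde^\kappa v}{\svtde^{\kappa_0}v}.
\end{equation*}
The third term is non-negative by the non-positive property \eqref{eq:SV-nonpprop} of \cref{prop:SVtd-prop} (valid under the upwind condition), since $\{\beta_{\kappa,\kappa_0}\}$ is symmetric; whether it is also positive semi-definite is exactly the RK-specific ingredient supplied by the RKDG analysis. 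The heart of the argument is therefore controlling the middle term $\sum_{\kappa=1}^s \beta_\kappa \nm{\svtde^\kappa v}_*^2$. For the special case $r \equiv 3 \pmod 4$ with an $r$th-order $r$-stage method, the known RKDG result shows that the leading coefficient $\beta_s$ is non-positive (equivalently, the corresponding stability polynomial has modulus bounded by one on the relevant spectral region), so that this sum, together with the non-negative jump term, can be absorbed to yield the clean bound with no surviving $C(\tau/h)^{\gamma_0}$ term.

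Concretely, I would cite the precise monotonicity lemma from \cite{Xu2020SuperconvergenceAO} (or \cite{Xu2019TheLS}) that handles the case $r \equiv 3 \pmod 4$, observing that its proof uses only the skew-symmetric property \eqref{eq:SV-sksymprop}, the non-positive property \eqref{eq:SV-nonpprop}, and the RK-determined values of $\{\beta_\kappa\}$, $\{\beta_{\kappa,\kappa_0}\}$. Since \cref{prop:SVtd-prop} establishes precisely these properties for $\svtd$ with respect to $\ip{\cdot}{\cdot}_*$ under the upwind condition \eqref{eq:UWcon}, the RKDG proof applies word-for-word with $\nm{\cdot}$ replaced by $\nm{\cdot}_*$. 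The main obstacle—and the only genuinely delicate point—is verifying that the RK-specific coefficient structure for $r \equiv 3 \pmod 4$ indeed forces the combined contribution of the middle and jump terms in \eqref{eq:SV-Eequal} to be non-positive; this is an algebraic fact about the stability function of such RK methods that is already resolved in the cited RKDG literature, so the proof reduces to invoking that result within the present $\nm{\cdot}_*$ framework. Consequently the proof is short: it is essentially an appeal to \cref{rmk:DGSVsameresult} specialized to the monotone case.
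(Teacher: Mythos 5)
Your proposal is correct and takes essentially the same route as the paper: the paper presents \cref{monotone} precisely as a consequence of the transfer principle in \cref{rmk:DGSVsameresult}, i.e., since the coefficients in \eqref{eq:SV-RKform} and \eqref{eq:SV-Eequal} are determined solely by the RK time discretization, and since \cref{prop:SVtd-prop} gives $\svtd$ the skew-symmetry and non-positivity properties with respect to $\ip{\cdot}{\cdot}_*$ under the upwind condition \eqref{eq:UWcon}, the RKDG monotonicity result for $r$-stage, $r$th-order methods with $r \equiv 3 \ ({\rm mod}\ 4)$ from \cite{Xu2019TheLS,Xu2020SuperconvergenceAO} carries over verbatim with $\nm{\cdot}$ replaced by $\nm{\cdot}_*$. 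Your identification of the only delicate point (the RK-specific sign structure of $\{\beta_\kappa\}$, $\{\beta_{\kappa,\kappa_0}\}$, which is resolved in the cited RKDG literature) is exactly what the paper delegates to those references as well.
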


\subsubsection{Estimates for source terms}
\begin{proposition}\label{prop:SV-G-est}
    There exists a constant $C>0$ independent of $n$ such that 
    \begin{equation}\label{eq:SV-G-est}
        \nm{G^{n,\ell,*}}_*^2 \leq C\sum_{\kappa=0}^\ell \nm{g^{n,\ell}}^2,\quad 0\leq \ell \leq s-1.
    \end{equation}
\end{proposition}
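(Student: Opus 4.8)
The plan is to treat \eqref{eq:SV-G} as a well-posed variational definition and to bound the energy norm of $G^{n,\ell,*}$ by testing against itself. Since $\ip{\cdot}{\cdot}_*$ is a genuine inner product on $\mathbb{V}^k$ by \cref{thm:subdivision}, the element $G^{n,\ell,*}\in\mathbb{V}^k$ from \eqref{eq:SV-G} is well-defined, and the natural first step is to choose $\omega = G^{n,\ell,*}$ in \eqref{eq:SV-G}, giving
\begin{equation*}
  \nm{G^{n,\ell,*}}_*^2 = \sum_{0\leq \mykappa \leq \ell} d_{\ell \mykappa} \ip{g^{n,\mykappa}}{M^* G^{n,\ell,*}}.
\end{equation*}

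The one point requiring care is that $g^{n,\mykappa}$ need not lie in $\mathbb{V}^k$, so the right-hand side cannot simply be rewritten as an energy inner product. Instead I would keep the factor $M^* G^{n,\ell,*}$ explicit, apply the Cauchy--Schwarz inequality in the standard $L^2$ inner product termwise, and invoke the mesh-independent boundedness of $M^*$ from \cref{prop:Mstar}:
\begin{equation*}
  \ip{g^{n,\mykappa}}{M^* G^{n,\ell,*}} \le \nm{g^{n,\mykappa}}\, \nm{M^* G^{n,\ell,*}} \le C \nm{g^{n,\mykappa}}\, \nm{G^{n,\ell,*}}.
\end{equation*}
This is precisely where the boundedness of $M^*$ does the essential work, since it lets one control the cross term even though $g^{n,\mykappa}\notin\mathbb{V}^k$, which is exactly why $\ip{g^{n,\mykappa}}{M^*\omega}$ rather than $\ip{g^{n,\mykappa}}{\omega}_*$ appears in \eqref{eq:SV-G}.

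Next I would convert the $L^2$-norm of $G^{n,\ell,*}$ back into its energy norm through the norm equivalence $\nm{v}\le c^{-1}\nm{v}_*$ of \cref{prop:ipstar}, so that after collecting terms
\begin{equation*}
  \nm{G^{n,\ell,*}}_*^2 \le C \Big( \sum_{0\leq \mykappa \leq \ell} |d_{\ell \mykappa}|\, \nm{g^{n,\mykappa}} \Big) \nm{G^{n,\ell,*}}_*.
\end{equation*}
Dividing through by $\nm{G^{n,\ell,*}}_*$ (the case $G^{n,\ell,*}=0$ being trivial) bounds $\nm{G^{n,\ell,*}}_*$ by a weighted sum of the $\nm{g^{n,\mykappa}}$. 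Squaring and applying the discrete Cauchy--Schwarz inequality—using that the number of terms $\ell+1\le s$ and the coefficients $d_{\ell\mykappa}$ are fixed by the RK method, hence independent of $n$, $\tau$, and $h$—yields the desired estimate \eqref{eq:SV-G-est}, with a constant $C$ depending only on $\nm{M^*}$, the equivalence constants of \cref{prop:ipstar}, and the RK coefficients.

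I do not anticipate a genuine obstacle: this is a direct energy estimate, and all the heavy lifting has already been done upstream. The only subtle step is the treatment of the mixed term $\ip{g^{n,\mykappa}}{M^* G^{n,\ell,*}}$, for which the uniform bound on $\nm{M^*}$ (\cref{prop:Mstar}) and the energy/$L^2$ norm equivalence (\cref{prop:ipstar}) are exactly the required tools.
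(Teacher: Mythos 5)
Your proposal is correct and follows essentially the same route as the paper's proof: test \eqref{eq:SV-G} with $\omega = G^{n,\ell,*}$, apply the Cauchy--Schwarz inequality termwise in $L^2$, and invoke the boundedness of $M^*$ (\cref{prop:Mstar}) together with the norm equivalence $\nm{G^{n,\ell,*}} \leq C\nm{G^{n,\ell,*}}_*$ (\cref{prop:ipstar}) before absorbing the finite sum of RK coefficients into the constant. The paper merely compresses these steps into a single displayed inequality, so there is no substantive difference.
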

\begin{proof}
    According to \eqref{eq:SV-G} and the Cauchy--Schwarz inequality, we derive  
    \begin{equation*}
        \frac{\nm{G^{n,\ell,*}}_*^2}{\sqrt{\sum_{0\leq \mykappa \leq \ell} \nm{G^{n,\ell,*}}^2}} = \frac{\sum_{0\leq \mykappa \leq \ell} d_{\ell \mykappa} \ip{g^{n,\mykappa}}{M^* G^{n,\ell,*}}}{\sqrt{\sum_{0\leq \mykappa \leq \ell} \nm{G^{n,\ell,*}}^2}} \leq \nm{M^*} \sqrt{\sum_{0\leq \mykappa \leq \ell} d_{\ell \mykappa}^2\nm{g^{n,\mykappa}}^2} .
    \end{equation*}
    We then obtain the estimate \eqref{eq:SV-G-est} by using $\nm{M^*}\leq C$ (\cref{prop:Mstar}) and $\nm{G^{n,\ell,*}} \leq C\nm{G^{n,\ell,*}}_*$ (\cref{prop:ipstar}).
\end{proof}

\subsubsection{Proof of \eqref{eq:stability-RKSV}}
Without the loss of generality, we assume that $h \leq 1$. Let $\gamma=\frac{\gamma_0}{\gamma_0-1} \geq 1$. When $\frac{\tau}{h^\gamma}\leq C_{\mathrm{CFL}}^*$, we have $\frac{\tau}{h}=\frac{\tau}{h^\gamma}h^{\gamma-1} \leq C_{\mathrm{CFL}}^*$, and then obtain $\nm{R_s(u_h^n)}_*^2 \leq \left( 1+C \tau \right)\nm{u_h^n}_*^2$ by \cref{cor:SV-Eiequal}. Using \eqref{eq:SV-RKform} and \cref{prop:SV-G-est} gives  
\begin{equation*}
    \begin{aligned}
        \nm{u_h^{n+1}}_*^2 & = \nm{R_s(u_h^n) + \tau\sum_{\kappa, \ell} \alpha_{\ell, \kappa }\svtde^\kappa G^{n,\ell,*}}_*^2 \\
        & \leq (1+\tau)\nm{R_s(u_h^n)}_*^2 + C(1+T)\tau \sum_{\kappa} \nm{\svtd}_*^\kappa \cdot \sum_{\ell} \nm{G^{n,\ell,*}}_*^2 \\
        & \leq \left( 1+\left( C+T \right)\tau \right)\nm{u_h^n}_*^2 + C\tau \sum_{\kappa} \nm{\svtd}_*^\kappa \cdot \sum_{\ell} \nm{g^{n,\ell}}^2 ,
    \end{aligned}
\end{equation*}
since $\tau \leq T$. Consequently, we obtain \eqref{eq:stability-RKSV}, by noting that \eqref{eq:SV-wkbdd} implies 
\begin{equation*}
    \nm{\svtd}_* = \sup_{v \not \equiv 0} \frac{\nm{\svtd v}_*}{\nm{v}_*} \leq C \frac{\tau}{h} \leq C.
\end{equation*}

\subsection{Proof of \cref{thm:err-OESV}}\label{proof:err-OESV}
Based on the established connection between the DG and SV methods, we will prove \cref{thm:err-OESV} using the error estimation techniques for the RKDG scheme in \cite{Xu2020SuperconvergenceAO,Xu2020ErrorEO}. The proof involves three key components:
\begin{enumerate}
    \item Reference functions of the exact solution to measure the error at each RK stages.
    \item A projection with certain approximation and superconvergence properties.
    \item An error decomposition based on the aforementioned projection.
\end{enumerate}

\subsubsection{Reference functions}
The reference functions of the exact solution $U$ is defined according to \cite{Xu2020SuperconvergenceAO,Xu2020ErrorEO} as follows. 
\begin{definition}[Reference functions]\label{def:reffun}
    For $0\leq n\leq\nn$, denote $U^n = U(\cdot,n\tau)$. The reference functions are  recursively defined as 
    \begin{equation}\label{eq:reffun}
        \begin{aligned}
            & U^{n,0}=U^{n}, \\
            & U^{n,\ell+1}=\sum_{0\leq \kappa \leq \ell}\left( c_{\ell \kappa}U^{n,\kappa}-\tau d_{\ell \kappa}\beta U_x^{n,\kappa} \right),\quad \ell = 0,...,s-2, \\
            & U^{n,s}=U^{n+1}.
        \end{aligned}
    \end{equation}
\end{definition}
To derive error estimate\eqref{eq:err-OESV}, we require the following formulation for $\{ U^{n,\kappa} \}$. 
\begin{proposition}[Formulation for reference functions]\label{prop:SV-reffun}
    The reference functions $\{ U^{n,\kappa} \}$ satisfy
    \begin{equation}\label{eq:SV-reffun}
        \ip{U^{n,\ell+1}}{M^* \omega} = \sum_{0 \leq \kappa \leq \ell}\left( c_{\ell \kappa}\ip{U^{n,\kappa}}{M^* \omega} + \tau d_{\ell \kappa}\sv{U^{n,\kappa}}{\omega} \right) + \tau\ip{\rho^{n,\ell+1}}{M^* \omega},
    \end{equation}
\end{proposition}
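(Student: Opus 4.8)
The plan is to pair the defining recursion \eqref{eq:reffun} with $M^*\omega$ and to reduce the entire statement to a single \emph{exact} identity that trades the continuous spatial derivative $-\beta U_x^{n,\kappa}$ for the discrete SV spatial operator $\sv{U^{n,\kappa}}{\omega}$. Concretely, I would first establish that, for each smooth (hence globally continuous) reference function $U^{n,\kappa}$,
\begin{equation*}
	-\beta\ip{U_x^{n,\kappa}}{M^*\omega} = \sv{U^{n,\kappa}}{\omega}\quad\forall \omega\in\mathbb{V}^k.
\end{equation*}
Granting this identity, applying $\ip{\cdot}{M^*\omega}$ to \eqref{eq:reffun} for $\ell=0,\dots,s-2$ immediately yields \eqref{eq:SV-reffun} with $\rho^{n,\ell+1}\equiv 0$, since on those intermediate stages the reference recursion is an exact equality of functions and the term $\tau d_{\ell\kappa}(-\beta\ip{U_x^{n,\kappa}}{M^*\omega})$ becomes $\tau d_{\ell\kappa}\sv{U^{n,\kappa}}{\omega}$ exactly.

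The heart of the argument is the displayed identity, which I would prove by invoking \cref{lemma:Mstar-difference} cell-by-cell with $v=U_x^{n,\kappa}$ and antiderivative $V=U^{n,\kappa}$, giving $\ip{U_x^{n,\kappa}}{M^*\omega}=\ip{U_x^{n,\kappa}}{\omega}+\sum_i R_i^k(U^{n,\kappa}\omega_x)$. I would then integrate $\ip{U_x^{n,\kappa}}{\omega}$ by parts on each $I_i$ and use the global continuity of $U^{n,\kappa}$ (so $\jump{U^{n,\kappa}}_{i+\hf}=0$) together with periodicity to telescope the interface contributions into $-\sum_i (U^{n,\kappa})_{i+\hf}^-\jump{\omega}_{i+\hf}$. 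The interior volume integral produced by this integration by parts cancels exactly against the $\int_{I_i}U^{n,\kappa}\omega_x\,\mathrm{d}x$ sitting inside each remainder $R_i^k(U^{n,\kappa}\omega_x)$, leaving precisely $-\beta\ip{U_x^{n,\kappa}}{M^*\omega}=\beta\big(\sum_i Q_i^k(U^{n,\kappa}\omega_x)+\sum_i (U^{n,\kappa})_{i+\hf}^-\jump{\omega}_{i+\hf}\big)$. This matches \eqref{eq:SVoperator} exactly, since the remaining jump term $-\sum_i A_{i,0}(\omega_x)_{i+\hf}^+\jump{U^{n,\kappa}}_{i+\hf}$ in \eqref{eq:SVoperator} vanishes by continuity of $U^{n,\kappa}$.

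It then remains to treat the final stage. Because \cref{def:reffun} sets $U^{n,s}=U^{n+1}$ by definition rather than through the recursion, the relation \eqref{eq:SV-reffun} for $\ell=s-1$ cannot hold with zero residual, and I would simply \emph{define} $\tau\rho^{n,s}$ to be the gap between $U^{n,s}$ and the recursion value $\sum_{0\le\kappa\le s-1}(c_{s-1,\kappa}U^{n,\kappa}-\tau d_{s-1,\kappa}\beta U_x^{n,\kappa})$, paired against $M^*\omega$. This gap is exactly the $r$th-order RK local truncation error of the exact solution under $U_t=-\beta U_x$, and will later be bounded by $\nm{\rho^{n,s}}\le C\tau^r$.

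I expect two bookkeeping points, rather than any deep difficulty, to require the most care. First, \cref{lemma:Mstar-difference} is stated for $\omega\in H^1(\Omega)$, whereas here $\omega\in\mathbb{V}^k$ is only piecewise $H^1$; I would therefore apply it in its cell-local form through $M_i^*$ and then sum over $i$, which is legitimate precisely because both $M^*$ and the remainder $\sum_i R_i^k$ act cellwise. Second, the accounting of the interface terms must be done carefully so that the boundary contributions from integration by parts reproduce the exact interface terms of $\sv{\cdot}{\cdot}$; this is where the continuity $\jump{U^{n,\kappa}}=0$ and the periodic boundary conditions are essential, and where a sign slip would be easiest to commit.
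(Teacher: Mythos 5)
Your proposal is correct and follows essentially the same route as the paper: the paper's proof likewise reduces everything to the identity $\ip{-\beta U_x^{n,\kappa}}{M^* \omega} = \sv{U^{n,\kappa}}{\omega}$ for all $\omega \in \mathbb{V}^k$ (valid because the smooth $U^{n,\kappa}$ is continuous, which kills the jump term in $\sv{\cdot}{\cdot}$), and then pairs the defining recursion of \cref{def:reffun} with $M^*\omega$, with the residual $\rho^{n,\ell+1}$ nonzero only at the final stage, exactly as you define it. The only minor difference is mechanical: the paper establishes the key identity ``similar to the proof of \cref{thm:dgr-1DsemiSV}'' (i.e., control-volume flux differences plus summation by parts with the exact flux), whereas you derive it from the cellwise form of \cref{lemma:Mstar-difference} plus integration by parts and periodicity; both derivations are valid and equivalent in substance, and your sign bookkeeping checks out.
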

where the local truncation error $\rho^{n,\ell+1}$ is defined as 
\begin{equation}\label{eq:lte}
    \rho^{n,\ell+1}(x) := \begin{cases}
        \frac1{\tau} \left({U(x,(n+1)\tau) - \sum_{0\leq \kappa \leq \ell}\left( c_{\ell \kappa}U^{n,\kappa}-\tau d_{\ell \kappa}\beta U_x^{n,\kappa} \right)}\right),~ &\mbox{if}~ \ell=s-1;\\
        0,~ &\mbox{otherwise}.
    \end{cases}
\end{equation}
\begin{proof}
    Since the exact solution  $U$ is assumed to be sufficiently smooth, $U^{n,\kappa}$ is continuous. Similar to the proof of \cref{thm:dgr-1DsemiSV}, we can show that 
    \begin{equation}\label{eq:proofreffun}
        \ip{-\beta U_x^{n,\kappa}}{M^* \omega} = \sv{U^{n,\kappa}}{\omega}\quad \forall \omega \in \mathbb{V}^k.
    \end{equation}
    We then obtain \eqref{eq:SV-reffun} by \cref{def:reffun} and \eqref{eq:proofreffun}.
\end{proof}

\begin{proposition}[Local truncation error]\label{prop:lte}
    If the exact solution $U$ is sufficiently smooth, the local truncation error $\rho^{n,\ell+1}$ satisfies
    \begin{equation}
        \nm{\rho^{n,\ell+1}}_{L^\infty(\Omega)} \leq C\tau^{r},\quad  0\leq n \leq \nn,~ \ell=0,\dots,s-1 .
    \end{equation}
\end{proposition}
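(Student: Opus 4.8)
The plan is to bound the local truncation error $\rho^{n,\ell+1}$ by comparing the reference-function recursion \eqref{eq:reffun} against the exact solution of \eqref{eq:1Dlinear}. Since $\rho^{n,\ell+1} \equiv 0$ for $\ell \neq s-1$ by definition \eqref{eq:lte}, the only nontrivial case is $\ell = s-1$, where $\rho^{n,s}$ measures exactly how far the $r$th-order, $s$-stage RK map—applied to the exact solution with the spatial operator replaced by $-\beta(\cdot)_x$—falls short of reproducing $U(x,(n+1)\tau)$. The key observation is that the recursion \eqref{eq:reffun} is nothing but the chosen RK scheme applied to the \emph{exact} semi-discrete-in-space problem $\frac{\mathrm d}{\mathrm dt} U = -\beta U_x$, whose exact flow over one step of size $\tau$ sends $U^n \mapsto U^{n+1}$ because $U$ solves the PDE exactly. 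Thus $\tau\rho^{n,s}$ is precisely the one-step local truncation error of an order-$r$ RK method.

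First I would write, for a fixed $x$, the scalar quantity obtained by freezing the spatial dependence: set $w(t) := U(x,t)$ and note that along the exact solution $\partial_t U = -\beta\partial_x U$, so all the stage values $U^{n,\kappa}(x)$ in \eqref{eq:reffun} coincide with the standard RK stage values for the ODE $w'(t) = F(w)$ where the action of $F$ is realized through $-\beta U_x$ evaluated on the smooth exact solution. Because the RK method has classical order $r$, its one-step error admits the standard Taylor-expansion bound: the difference between the numerical update and the exact update is $O(\tau^{r+1})$, with the constant controlled by the $(r+1)$th time derivative of the exact solution along the flow. Concretely, I would Taylor-expand $U(x,(n+1)\tau)$ about $t=n\tau$ and likewise expand each stage value $U^{n,\kappa}$ using \eqref{eq:reffun}, then invoke the order conditions $\sum_\kappa c_{\ell\kappa} = 1$ together with the defining order-$r$ accuracy of the Butcher coefficients $\{c_{\ell\kappa}\},\{d_{\ell\kappa}\}$ to cancel all terms up to order $\tau^r$ inside the bracket of \eqref{eq:lte}.

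After the cancellation, the leading surviving term in the bracket is of size $O(\tau^{r+1})$, and its coefficient involves space-time derivatives of $U$ up to total order $r+1$ (mixing $\partial_t$ and $\partial_x$ via the relation $\partial_t U = -\beta\partial_x U$, so all $t$-derivatives convert to $x$-derivatives with factors of $\beta$). Dividing by $\tau$ as in \eqref{eq:lte} yields $\rho^{n,s} = O(\tau^r)$ pointwise. Taking the supremum over $x \in \Omega$, the smoothness assumption on $U$ guarantees that the relevant derivatives $\partial_x^{r+1} U$ are uniformly bounded on $\Omega \times [0,T]$, so the constant $C$ depends only on $\beta$, $T$, the RK coefficients, and $\nm{U}_{C^{r+1}}$, but is independent of $n$, $\tau$, and $h$. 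This gives $\nm{\rho^{n,\ell+1}}_{L^\infty(\Omega)} \leq C\tau^r$ uniformly in $n$ and $\ell$.

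The main obstacle I anticipate is organizing the Taylor expansion cleanly enough to exhibit the exact order-$r$ cancellation without writing out the full multivariate RK order conditions. Rather than verifying order conditions by hand, the cleaner route is to treat the whole construction abstractly: the recursion \eqref{eq:reffun} is the order-$r$ RK method applied to the autonomous linear semigroup generated by $-\beta\partial_x$, for which the classical local-error theory directly gives a one-step defect of size $C\tau^{r+1}\sup_{[n\tau,(n+1)\tau]}\nm{\partial_t^{r+1} U}_{L^\infty(\Omega)}$; then converting the time derivatives to spatial ones via $\partial_t^{r+1}U = (-\beta)^{r+1}\partial_x^{r+1}U$ and using smoothness finishes the estimate. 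I would present the argument at this level, citing the standard RK consistency bound and the identity \eqref{eq:proofreffun} to justify that the stage structure is exactly that of the exact-in-space ODE, and leave the elementary Taylor bookkeeping to the reader.
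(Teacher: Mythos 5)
Your proposal is correct and follows exactly the route the paper intends: the paper omits the proof of \cref{prop:lte}, stating only that it ``follows from the Taylor expansion,'' and your argument---observing that $\rho^{n,\ell+1}$ vanishes except at $\ell=s-1$, recognizing \eqref{eq:reffun} as the order-$r$ RK method applied to the exact flow of $\frac{\mathrm{d}}{\mathrm{d}t}U=-\beta U_x$, invoking the classical $O(\tau^{r+1})$ one-step defect, converting time derivatives to spatial ones via $\partial_t^{r+1}U=(-\beta)^{r+1}\partial_x^{r+1}U$, and dividing by $\tau$---is precisely that Taylor-expansion argument carried out rigorously. The abstract semigroup/stability-polynomial viewpoint you adopt at the end is the right way to avoid the pointwise ``frozen-$x$ ODE'' heuristic, which by itself would be imprecise since the stages involve spatial derivatives of earlier stages.
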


\Cref{prop:lte} follows from the Taylor expansion and its proof is omitted.

\subsubsection{A projection operator}
Inspired by the semi-discrete analysis in \cite{Cao2021AnalysisOS}, we introduce the following projection operator $P^*$.
\begin{definition}[Projection $P^*$]\label{def:Pstar}
    For $v \in  L^2(\Omega)$, the projection $P^* v \in \mathbb{V}^k$ of $v$ is the piecewise-interpolating polynomial such that in each $I_i$,
    \begin{equation*}
       \left( P^* v \right)(x_{i,1}) = v(x_{i,1}), \dots , \left( P^* v \right)(x_{i,k}) = v(x_{i,k}),\ \left( P^* v \right)(x_{i+\hf}^-) = v(x_{i+\hf}^-).
    \end{equation*}
\end{definition}
The approximation and superconvergence properties of $P^*$ are demonstrated as follows.  
\begin{proposition}[Properties of $P^*$]\label{prop:SV-proj}
    The projection operator $P^*$ satisfies: 
    \begin{enumerate}
        \item There exists a constant $C$ independent of $v$ such that 
        \begin{equation}\label{eq:SV-approxprop}
            \nm{\eta^*} \leq Ch^{k+1}\nm{\partial_x^{k+1}v}_{L^\infty(\Omega)}.
        \end{equation}

        \item If $v$ is continuous, then 
        \begin{equation}\label{eq:SV-supconprop}
            \sv{\eta^*}{\omega} = 0\quad \forall \omega \in \mathbb{V}^{k}.
        \end{equation}
    \end{enumerate}
    
    In both \eqref{eq:SV-approxprop} and \eqref{eq:SV-supconprop}, $\eta^* = v - P^*v$.
\end{proposition}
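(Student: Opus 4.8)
The plan is to establish the two assertions independently: \eqref{eq:SV-approxprop} is a purely local interpolation estimate, whereas \eqref{eq:SV-supconprop} is the superconvergence identity that will later make the error decomposition collapse. The guiding observation, from \cref{def:Pstar}, is that on each cell $I_i$ the restriction $P^*v|_{I_i}$ is exactly the degree-$k$ Lagrange interpolant of $v$ through the $k+1$ nodes $x_{i,1},\dots,x_{i,k}$ and $x_{i+\hf}^-=x_{i,k+1}$; consequently $\eta^*=v-P^*v$ vanishes at each of these nodes in every cell, but \emph{not} necessarily at the left endpoint $x_{i,0}=x_{i-\hf}$, which is excluded from the interpolation set.

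To prove \eqref{eq:SV-approxprop} I would invoke the classical Lagrange remainder formula on $I_i$, writing $\eta^*(x)=\frac{v^{(k+1)}(\xi_x)}{(k+1)!}\prod_{\ell}(x-y_{i,\ell})$ with $\{y_{i,\ell}\}$ the $k+1$ interpolation nodes, and bounding each of the $k+1$ factors by $h_i$. This gives $\nm{\eta^*}_{L^\infty(I_i)}\le Ch_i^{k+1}\nm{\partial_x^{k+1}v}_{L^\infty(I_i)}$. Converting to $L^2$ through $\nm{\eta^*}_{L^2(I_i)}\le h_i^{1/2}\nm{\eta^*}_{L^\infty(I_i)}$, squaring, summing over $i$, and using $\sum_i h_i=b-a$ together with $h_i\le h$ then yields \eqref{eq:SV-approxprop}. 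This step is routine.

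The substantive part is \eqref{eq:SV-supconprop}. Since $\eta^*\notin\mathbb{V}^k$, I cannot appeal to the reduced form \eqref{eq:SVDGequal} and must work directly with the quadrature-based definition \eqref{eq:SVoperator}, examining its three pieces under the standing upwind condition \eqref{eq:UWcon} (which is in force throughout the error analysis, being required for stability). First, because $Q_i^k(\eta^*\omega_x)=\sum_{j=0}^{k+1}A_{i,j}\eta^*(x_{i,j})\omega_x(x_{i,j})$ and $A_{i,0}=0$, only the nodes $x_{i,1},\dots,x_{i,k+1}$ contribute, and $\eta^*$ vanishes at all of them, so $\sum_i Q_i^k(\eta^*\omega_x)=0$. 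Second, the interface term $\sum_i \eta^*(x_{i+\hf}^-)\jump{\omega}_{i+\hf}$ vanishes because $\eta^*(x_{i+\hf}^-)=0$ by the interpolation condition. Third, the term $\sum_i A_{i,0}(\omega_x)_{i+\hf}^+\jump{\eta^*}_{i+\hf}$ vanishes identically since $A_{i,0}=0$. Collecting the three gives $\sv{\eta^*}{\omega}=0$ for all $\omega\in\mathbb{V}^k$.

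I expect the main point---rather than any lengthy calculation---to be the role of the excluded node $x_{i,0}$ and hence of the upwind condition. At $x_{i-\hf}$ the error $\eta^*$ is not pinned to zero, so without \eqref{eq:UWcon} the left-endpoint contributions persist in both the quadrature term and the jump term; a reindexing shows they telescope into $\sum_i (A_{i,0}-A_{i-1,0})\,\eta^*(x_{i-\hf}^+)\,\omega_x(x_{i-\hf}^+)$, which does not vanish on nonuniform meshes. Imposing $A_{i,0}=0$ is precisely what simultaneously drops this node from $Q_i^k$ and annihilates the jump term, so that all boundary contributions disappear. The continuity hypothesis on $v$ is used only to ensure that the nodal traces defining $P^*$ and the quadrature are single-valued, so that $\eta^*$ and its interface values are unambiguously defined.
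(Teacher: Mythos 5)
Your proof of \eqref{eq:SV-approxprop} is correct and coincides with the paper's (which simply cites the standard Lagrange interpolation estimate). Your proof of \eqref{eq:SV-supconprop} is valid \emph{under the upwind condition} \eqref{eq:UWcon}, and since \cref{prop:SV-proj} is only invoked in the error analysis of \cref{thm:err-OESV}, where stability effectively forces that condition, your argument does suffice for the role the proposition plays downstream.

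There is, however, a gap relative to the proposition as stated: it carries no hypothesis on $A_{i,0}$, and the paper's proof is unconditional. The point you miss is that the left-endpoint quadrature contribution and the jump-penalty term in \eqref{eq:SVoperator} are not independent objects: in the proof of \cref{thm:dgr-1DsemiSV} they arise \emph{together}, by splitting the upwind flux at the node $x_{i,0}=x_{i-\hf}$ of cell $I_i$ into the interior trace plus a jump correction. Consequently, the weight multiplying $(\omega_x)_{i+\hf}^+\jump{v}_{i+\hf}$ is the left-endpoint weight of the cell $I_{i+1}$ that owns the trace $(\omega_x)_{i+\hf}^+$, i.e.\ $A_{i+1,0}$, not the $A_{i,0}$ used in your telescoping computation (the paper's notation is loose here; on a nonuniform mesh the literal reading would already make \cref{thm:dgr-RKSV} itself false). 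With the derivation-consistent indexing, the only surviving terms pair off as
\begin{equation*}
A_{i+1,0}(\omega_x)_{i+\hf}^+\,\eta^*(x_{i+\hf}^+) \;-\; A_{i+1,0}(\omega_x)_{i+\hf}^+\,\jump{\eta^*}_{i+\hf} \;=\; A_{i+1,0}(\omega_x)_{i+\hf}^+\,\eta^*(x_{i+\hf}^-)\;=\;0,
\end{equation*}
so the residual $\sum_i (A_{i,0}-A_{i-1,0})\,\eta^*(x_{i-\hf}^+)\,\omega_x(x_{i-\hf}^+)$ you exhibit is identically zero, with no condition on $A_{i,0}$. This is exactly what the paper's one-line proof means by ``\eqref{eq:SV-supconprop} follows from $\eta^*(x_{i+\hf}^-)=0$'': the operator $\sv{\cdot}{\cdot}$ only ever evaluates \emph{upwind} traces of its first argument, and $P^*$ is designed to interpolate $v$ precisely at those evaluation points (the interior nodes and the right endpoint of each cell). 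The upwind condition is what stability requires (\cref{rmk:UWcon}); it is not what makes $P^*$ superconvergent, and your concluding claim that \eqref{eq:SV-supconprop} genuinely fails without it is incorrect.
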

\begin{proof}
    The property \eqref{eq:SV-approxprop} holds since $P^*$ is a piecewise polynomial interpolation operator. The property \eqref{eq:SV-supconprop} follows from the fact $\eta^* (x_{i+\hf}^-) = 0$ for all $i$. 
\end{proof}

\subsubsection{Error decomposition}
At each RK stage and each time step, we consider the following error decomposition based on the projection $P^*$.
\begin{definition}[Error decomposition]\label{def:SV-errdecompose}
    \begin{equation}
        u_h^n - U^n = \xi^{n,*} - \eta^{n,*} \quand u_h^{n,\kappa} - U^{n,\kappa} =  \xi^{n, \mykappa,*} - \eta^{n, \mykappa,*},
    \end{equation}
    where $\{ \xi^{n,*} \}$, $\{ \xi^{n, \mykappa,*} \}$, $\{ \eta^{n,*} \}$, and $\{ \eta^{n, \mykappa,*} \}$ are defined as
    \begin{subequations}\label{eq:SV-errdecompose}
        \begin{align}
           \xi^{n,*} = u_h^n - P^* U^n, \qquad & \xi^{n, \mykappa,*} = u_h^{n,\kappa} - P^* U^{n,\kappa}, \\
           \eta^{n,*} = U^n - P^* U^n, \qquad & \eta^{n, \mykappa,*} = U^{n,\kappa} - P^* U^{n,\kappa}.
        \end{align}
    \end{subequations}
\end{definition}
Inspired by \cite{Peng2023OEDGOD}, we skillfully treat the OESV scheme \eqref{eq:linearOESV} as a RKSV scheme with the ``source'' terms $\{ u_h^{n,\ell+1} - \tilde{u}_h^{n,\ell+1} \}$. By subtracting \eqref{eq:SV-reffun} from \eqref{eq:linearOESV} and using the superconvergence property of $P^*$, we obtain a novel formulation for $\{ \xi^{n,\kappa,*} \}$. 
\begin{proposition}[Formulation for $\{ \xi^{n,\kappa,*} \}$]\label{prop:OESV-xi}
    For the OESV scheme \eqref{eq:linearOESV}, define 
    $$\cF^{n,\ell+1} = \frac{u_h^{n,\ell+1} - \tilde{u}_h^{n,\ell+1}}{\tau}, \quad \cZ^{n,\ell+1,*} = \frac{\eta^{n,\ell+1,*}-\sum_{0 \leq \kappa \leq \ell}c_{\ell \kappa}\eta^{n,\kappa,*}}{\tau}-\rho^{n, \ell+1},$$ 
    then $\{ \xi^{n,\kappa,*} \}$ satisfies 
    \begin{equation}\label{eq:xi-scheme}
        \begin{aligned}
            \ip{\xi^{n,\ell+1,*}}{\omega}_* & = \sum_{0 \leq \mykappa \leq \ell} \left(c_{\ell \mykappa}\ip{\xi^{n,\mykappa,*}}{\omega}_* + \tau d_{\ell \mykappa}\sv{\xi^{n,\mykappa,*}}{\omega}\right) \\
            & \quad  + \tau \ip{\cF^{n,\ell+1}}{M^* \omega} + \tau\ip{\cZ^{n,\ell+1,*}}{M^* \omega},
        \end{aligned}
    \end{equation}
\end{proposition}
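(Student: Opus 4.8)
The plan is to derive \eqref{eq:xi-scheme} by treating the oscillation-eliminating step \eqref{eq:linearOE} as a single perturbation of the pure RKSV update and then subtracting the reference-function identity \eqref{eq:SV-reffun}. First I would rewrite the OE step. Since $u_h^{n,\ell+1}=\cF_\tau \tilde u_h^{n,\ell+1}$ and $\cF^{n,\ell+1}=(u_h^{n,\ell+1}-\tilde u_h^{n,\ell+1})/\tau$, both iterates lie in $\mathbb{V}^k$ and $u_h^{n,\ell+1}=\tilde u_h^{n,\ell+1}+\tau\,\cF^{n,\ell+1}$. Pairing in the $*$-inner product and invoking the RK identity \eqref{eq:linearRK} with $g\equiv 0$ (as assumed in \cref{thm:err-OESV}) yields
\begin{equation*}
\ip{u_h^{n,\ell+1}}{\omega}_* = \sum_{0\le\mykappa\le\ell}\Big(c_{\ell\mykappa}\ip{u_h^{n,\mykappa}}{\omega}_* + \tau d_{\ell\mykappa}\sv{u_h^{n,\mykappa}}{\omega}\Big) + \tau\ip{\cF^{n,\ell+1}}{M^*\omega}.
\end{equation*}
This is precisely the point where the OE procedure is absorbed into one extra source term $\tau\cF^{n,\ell+1}$, which is the key idea of the whole argument.

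Next I would subtract the reference-function identity \eqref{eq:SV-reffun}. Writing $\ip{v}{\omega}_*=\ip{v}{M^*\omega}$ for $v\in\mathbb{V}^k$ so that every pairing is expressed through $M^*$, the subtraction produces an equation for $u_h^{n,\mykappa}-U^{n,\mykappa}$ whose inhomogeneity is $\tau\ip{\cF^{n,\ell+1}}{M^*\omega}-\tau\ip{\rho^{n,\ell+1}}{M^*\omega}$. I would then insert the error decomposition $u_h^{n,\mykappa}-U^{n,\mykappa}=\xi^{n,\mykappa,*}-\eta^{n,\mykappa,*}$ from \cref{def:SV-errdecompose} into each term, splitting every contribution into a $\xi$-part and an $\eta$-part.

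The decisive simplification comes from the superconvergence property \eqref{eq:SV-supconprop}. Because the exact solution is smooth, each reference function $U^{n,\mykappa}$ is continuous, so $\sv{\eta^{n,\mykappa,*}}{\omega}=0$ for all $\omega\in\mathbb{V}^k$; hence all $\eta$-contributions inside the spatial operator $\sv{\cdot}{\cdot}$ vanish and only $\sv{\xi^{n,\mykappa,*}}{\omega}$ survives. Collecting the remaining $\eta$-terms with the local truncation error and recognizing, directly from the definition of $\cZ^{n,\ell+1,*}$, that
\begin{equation*}
\ip{\eta^{n,\ell+1,*}}{M^*\omega} - \sum_{0\le\mykappa\le\ell}c_{\ell\mykappa}\ip{\eta^{n,\mykappa,*}}{M^*\omega} - \tau\ip{\rho^{n,\ell+1}}{M^*\omega} = \tau\ip{\cZ^{n,\ell+1,*}}{M^*\omega},
\end{equation*}
and finally using $\ip{\xi^{n,\mykappa,*}}{M^*\omega}=\ip{\xi^{n,\mykappa,*}}{\omega}_*$ (valid since the $\xi$-quantities lie in $\mathbb{V}^k$), gives \eqref{eq:xi-scheme}.

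I do not expect a serious obstacle; the derivation is essentially careful bookkeeping. The two points requiring attention are: (i) distinguishing which quantities lie in $\mathbb{V}^k$ (namely $u_h^{n,\mykappa}$, $\xi^{n,\mykappa,*}$, and $\cF^{n,\ell+1}$, for which $\ip{\cdot}{\cdot}_*=\ip{\cdot}{M^*\cdot}$) from those that do not ($U^{n,\mykappa}$, $\eta^{n,\mykappa,*}$, $\rho^{n,\ell+1}$), so the two inner-product notations are used consistently; and (ii) verifying that every $\eta$-term entering through $\sv{\cdot}{\cdot}$ is annihilated by \eqref{eq:SV-supconprop}, which is exactly where the continuity of the reference functions is invoked. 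The genuinely new ingredient, as the paper emphasizes, is the reinterpretation of the OE step as the source $\tau\cF^{n,\ell+1}$; once that reduction is made, the remainder mirrors the standard RKDG error-equation computation.
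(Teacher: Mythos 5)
Your proposal is correct and follows exactly the paper's own route: the paper likewise treats the OE step as the source term $u_h^{n,\ell+1}-\tilde{u}_h^{n,\ell+1}$, subtracts the reference-function identity \eqref{eq:SV-reffun} from \eqref{eq:linearOESV}, and invokes the superconvergence property \eqref{eq:SV-supconprop} of $P^*$ to annihilate the $\eta$-terms inside $\sv{\cdot}{\cdot}$, with the leftover $\eta$-terms and $\rho^{n,\ell+1}$ packaged into $\cZ^{n,\ell+1,*}$. Your bookkeeping of which quantities lie in $\mathbb{V}^k$ (so that $\ip{\cdot}{\omega}_*$ and $\ip{\cdot}{M^*\omega}$ may be interchanged) is also consistent with the paper's conventions.
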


	The following proposition will be useful for the estimate of $\{ \cZ^{n,\ell+1,*} \}$. 
\begin{proposition}\label{prop:SV-eta}
    There exists $C>0$ independent of $n$ such that
    \begin{equation}\label{eq:SV-eta-2}
        \nm{\eta^{n,\ell+1,*}-\sum_{0 \leq \kappa \leq \ell}c_{\ell \kappa}\eta^{n,\kappa,*}} \leq C\tau h^{k+1}, \quad 0 \leq \ell \leq s-1.
    \end{equation}
\end{proposition}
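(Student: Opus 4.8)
The plan is to use the linearity of the interpolation operator $P^*$ to collapse the combination into a single interpolation error, to extract an explicit factor of $\tau$ from the recursive structure of the reference functions, and then to invoke the approximation property \eqref{eq:SV-approxprop}. Since $\eta^{n,\kappa,*} = U^{n,\kappa} - P^* U^{n,\kappa}$ and $P^*$ is linear, I would first write
\begin{equation*}
    \eta^{n,\ell+1,*}-\sum_{0 \leq \kappa \leq \ell}c_{\ell \kappa}\eta^{n,\kappa,*} = W^{n,\ell+1} - P^* W^{n,\ell+1}, \quad W^{n,\ell+1} := U^{n,\ell+1} - \sum_{0 \leq \kappa \leq \ell}c_{\ell \kappa}U^{n,\kappa}.
\end{equation*}
Applying \eqref{eq:SV-approxprop} with $v = W^{n,\ell+1}$, the whole task reduces to showing that $\nm{\partial_x^{k+1}W^{n,\ell+1}}_{L^\infty(\Omega)} \le C\tau$ uniformly in $n$.

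For the intermediate stages $0 \le \ell \le s-2$, \cref{def:reffun} gives directly $W^{n,\ell+1} = -\tau\sum_{0\le\kappa\le\ell} d_{\ell\kappa}\beta U_x^{n,\kappa}$, so the factor $\tau$ is explicit. A short induction on \cref{def:reffun} shows that each $U^{n,\kappa}$ is a polynomial in $\tau$ (with $\tau \le T$) whose coefficients are spatial derivatives of $U^n$ of order at most $\kappa$; since $U$ is sufficiently smooth on $[0,T]$, these derivatives are bounded uniformly in $n$, whence $\nm{\partial_x^{k+1}W^{n,\ell+1}}_{L^\infty(\Omega)} \le C\tau$ and \eqref{eq:SV-approxprop} yields the claimed bound.

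For the final stage $\ell = s-1$ the recursion no longer applies, since $U^{n,s} = U^{n+1}$; here I would instead rearrange the definition \eqref{eq:lte} of the local truncation error into
\begin{equation*}
    W^{n,s} = U^{n+1} - \sum_{0\le\kappa\le s-1} c_{s-1,\kappa}U^{n,\kappa} = \tau\Big(\rho^{n,s} - \sum_{0\le\kappa\le s-1} d_{s-1,\kappa}\beta U_x^{n,\kappa}\Big),
\end{equation*}
which again isolates a factor $\tau$. The $U_x^{n,\kappa}$-terms are handled exactly as before, so the only remaining, and the main, difficulty is to bound $\nm{\partial_x^{k+1}\rho^{n,s}}_{L^\infty(\Omega)}$, because \cref{prop:lte} controls $\rho^{n,s}$ itself but not its spatial derivatives. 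I would overcome this by observing that, for $g \equiv 0$, the function $V := \partial_x^{k+1}U$ again solves $V_t + \beta V_x = 0$, and that differentiation commutes with the entire reference-function and truncation-error construction; consequently $\partial_x^{k+1}\rho^{n,s}$ is itself the local truncation error of the smooth solution $V$, and the Taylor-expansion argument behind \cref{prop:lte} applies verbatim to give $\nm{\partial_x^{k+1}\rho^{n,s}}_{L^\infty(\Omega)} \le C\tau^r \le C$. Combining the two cases and applying \eqref{eq:SV-approxprop} once more delivers \eqref{eq:SV-eta-2}.
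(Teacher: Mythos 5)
Your proof is correct and follows essentially the same route as the paper: both arguments extract an explicit factor of $\tau$ from the recursion \eqref{eq:reffun} (plus the term $\tau\rho^{n,\ell+1}$ at the final stage via \eqref{eq:lte}), reduce the remainder to spatial derivatives of $U^n$ that are bounded by smoothness, and conclude with the approximation property \eqref{eq:SV-approxprop}. The one point where you go beyond the paper is worth noting: the paper invokes \cref{prop:lte} together with \eqref{eq:SV-approxprop}, which implicitly requires a bound on $\nm{\partial_x^{k+1}\rho^{n,\ell+1}}_{L^\infty(\Omega)}$ rather than on $\rho^{n,\ell+1}$ itself, and your observation that $V=\partial_x^{k+1}U$ solves the same advection equation and that spatial differentiation commutes with the reference-function and truncation-error construction supplies exactly the justification this step needs.
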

\begin{proof}
    Since $\sum_{0 \leq \kappa \leq \ell}c_{\ell \kappa} = 1$, using \eqref{eq:reffun} and \eqref{eq:lte} gives 
    \begin{equation*}
        U^{n,\ell+1}-\sum_{0 \leq \kappa \leq \ell}c_{\ell \kappa}U^{n,\kappa} = \tau \left(\sum_{0 \leq \kappa \leq \ell} \hat{c}_{\ell \kappa}\tau^{\kappa} \partial_x^{\kappa+1} U^n \right) + \tau\rho^{n,\ell+1}
    \end{equation*}
    for some constants $\{ \hat{c}_{\ell \kappa} \}$. By \cref{prop:lte} and the approximation property \eqref{eq:SV-approxprop} of $P^*$, we obtain for $0 \leq \ell \leq s-1$ that
    \begin{equation*}
        \nm{\eta^{n,\ell+1,*}-\sum_{0 \leq \kappa \leq \ell}c_{\ell \kappa}\eta^{n,\kappa,*}} \leq \tau \left( C \sum_{0 \leq \kappa \leq s-1}(T)^\kappa  + C(T)^r \right)h^{k+1}.
    \end{equation*}
The proof is completed. 
\end{proof}

To derive the estimate for $\nm{\cF^{n,\ell+1}}$, we introduce 
\begin{equation*}
     \tilde{\xi}^{n,\mykappa,*} = \tilde{u}_h^{n,\kappa} - P^* U^{n,\kappa},
\end{equation*}
for $0\leq n \leq \nn$ and $0\leq \mykappa \leq s-1$. 

\begin{proposition}\label{prop:OE-1}
    If $\nm{\tilde{\xi}^{n,\ell+1,*}}_{L^\infty (\Omega)} \leq h$ and $ h \leq h_*$ for some sufficiently small $h_*>0$, then $\nm{\cF^{n,\ell+1}} \leq C\left( \nm{\tilde{\xi}^{n,\ell+1,*}}_* + h^{k+1} \right)$. Here $C$ is independent of $n$.
\end{proposition}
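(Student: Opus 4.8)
The plan is to combine the explicit solution formula for the OE operator $\cF_\tau$ with the error splitting $\tilde u_h^{n,\ell+1} = \tilde\xi^{n,\ell+1,*} + P^* U^{n,\ell+1}$, reducing the estimate to two local quantities on each cell: the total damping strength and the local oscillation. On a cell $K = I_i$ the operator $\cF_\tau$ fixes the cell average and scales the degree-$j$ coefficient of $\tilde u_h^{n,\ell+1}$ (in the orthogonal basis $\{\phi_K^{(j)}\}$) by $\mathrm{e}^{-\tau \sum_{m=0}^j \delta_K^m}$. Using $|\mathrm{e}^{-x}-1| \le x$ for $x \ge 0$, orthogonality, and writing $\Delta_K := \sum_{m=0}^k \delta_K^m(\tilde u_h^{n,\ell+1})$, I would first establish the cellwise bound
\begin{equation*}
    \nm{\cF^{n,\ell+1}}_{L^2(K)} \le \Delta_K \, \nm{\tilde u_h^{n,\ell+1} - \overline{u}_K}_{L^2(K)},
\end{equation*}
where $\overline{u}_K$ is the cell average of $\tilde u_h^{n,\ell+1}$. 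Everything then hinges on bounding $\Delta_K$ and the local oscillation, and summing over $K$.

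Next I would estimate $\Delta_K$. Since each $\delta_K^m$ carries the global normalization $\|\tilde u_h^{n,\ell+1} - \mathrm{avg}_{\Omega}(\tilde u_h^{n,\ell+1})\|_{L^\infty(\Omega)}$ in its denominator (see \eqref{eq:sig}), the first task is a uniform positive lower bound on this quantity. Discarding the trivial case where it vanishes (then $\cF^{n,\ell+1}=0$), I would use the splitting together with the hypothesis $\nm{\tilde\xi^{n,\ell+1,*}}_{L^\infty(\Omega)}\le h$ and the interpolation bound $\nm{\eta^{n,\ell+1,*}}_{L^\infty(\Omega)}\le Ch^{k+1}$ to reduce it to the oscillation of the smooth reference function; assuming the exact solution is not identically constant, this is bounded below by a positive constant once $h \le h_*$. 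For the numerator, the derivative jumps are controlled by the same splitting: since $U^{n,\ell+1}$ is smooth, the jumps of $\partial^m P^*U^{n,\ell+1}$ coincide (up to sign) with those of the interpolation error $\eta^{n,\ell+1,*}$, giving $h^m|[[\partial^m P^*U^{n,\ell+1}]]| \le Ch^{k+1}$, while the inverse inequalities \eqref{eq:normequiv-1}--\eqref{eq:normequiv} yield $h^m|[[\partial^m\tilde\xi^{n,\ell+1,*}]]| \le Ch^{-1/2}\nm{\tilde\xi^{n,\ell+1,*}}_{L^2(\tilde K)}$, where $\tilde K$ is the union of $K$ with its neighbors. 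Together these give $\Delta_K \le C h^{-1}\big(h^{-1/2}\nm{\tilde\xi^{n,\ell+1,*}}_{L^2(\tilde K)} + h^{k+1}\big)$.

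Finally I would bound the local oscillation by $\nm{\tilde u_h^{n,\ell+1}-\overline{u}_K}_{L^2(K)} \le C\big(h^{3/2} + \nm{\tilde\xi^{n,\ell+1,*}}_{L^2(K)}\big)$, using smoothness of $U^{n,\ell+1}$ and \cref{prop:SV-proj} for the $P^*U^{n,\ell+1}$ part. Multiplying by the bound for $\Delta_K$, expanding, and using $\nm{\tilde\xi^{n,\ell+1,*}}_{L^2(K)} \le h^{3/2}$ (a consequence of $\nm{\tilde\xi^{n,\ell+1,*}}_{L^\infty(\Omega)}\le h$) to absorb the cross terms, I expect each cell to satisfy $\nm{\cF^{n,\ell+1}}_{L^2(K)} \le C\big(\nm{\tilde\xi^{n,\ell+1,*}}_{L^2(\tilde K)} + h^{k+3/2}\big)$. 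Squaring, summing over the $O(h^{-1})$ cells with the bounded overlap of the neighborhoods $\{\tilde K\}$, then gives $\nm{\cF^{n,\ell+1}} \le C\big(\nm{\tilde\xi^{n,\ell+1,*}} + h^{k+1}\big)$, and the norm equivalence in \cref{prop:ipstar} converts $\nm{\tilde\xi^{n,\ell+1,*}}$ into $\nm{\tilde\xi^{n,\ell+1,*}}_*$; uniformity in $n$ follows from the uniform smoothness of $U^{n,\kappa}$ on $[0,T]$. The main obstacle is the nonlinearity of $\cF_\tau$, concentrated in the lower bound on the global normalization denominator: it is precisely what forces the smallness hypotheses $\nm{\tilde\xi^{n,\ell+1,*}}_{L^\infty(\Omega)}\le h$ and $h\le h_*$, and it must be paired with a careful balancing of the powers of $h$ in the jump and oscillation estimates so that the cross terms do not spoil the target order.
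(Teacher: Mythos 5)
Your proposal is correct, but it follows a genuinely different route from the paper's own proof. The paper proves \cref{prop:OE-1} by reduction to the cited OEDG analysis: it introduces the Gauss--Radau projection $\mathbf{P}$, writes $\tilde u_h^{n,\ell+1}-\mathbf{P}U^{n,\ell+1}=\tilde\xi^{n,\ell+1,*}-\eta^{n,\ell+1,*}+\eta^{n,\ell+1}$, uses the approximation bounds for $\eta^{n,\ell+1,*}$ and $\eta^{n,\ell+1}$ together with the inverse inequality \eqref{eq:normequiv} to show that your hypothesis forces $\nm{\tilde u_h^{n,\ell+1}-\mathbf{P}U^{n,\ell+1}}_{L^\infty(\Omega)}\le 2h$, then invokes the implication established in \cite{Peng2023OEDGOD} (Section 4.4.2) as a black box, and finally converts back using \eqref{eq:propipstar_2}. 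You instead re-derive the content of that black box natively in terms of $P^*$: the explicit solution formula for $\cF_\tau$ with $|e^{-x}-1|\le x$ gives the cellwise bound by $\Delta_K$ times the local oscillation; the jumps of $\partial^m P^*U^{n,\ell+1}$ are converted to interpolation-error jumps of size $Ch^{k+1-m}$; the jumps of $\partial^m\tilde\xi^{n,\ell+1,*}$ are controlled by inverse estimates; and the power bookkeeping closes correctly (the cross term is absorbed using $\nm{\tilde\xi^{n,\ell+1,*}}_{L^2(K)}\le h^{3/2}$, and $O(h^{-1})$ cells times $h^{2k+3}$ gives $h^{2k+2}$). The paper's route buys brevity and reuse of an existing lemma, at the cost of a projection-conversion step and an external reference; your route buys a self-contained argument that never leaves the $P^*$-based decomposition and exposes the actual mechanism (scale invariance of the damping, jump estimates, balancing of $h$-powers) rather than hiding it in a citation.

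One hypothesis you invoke deserves flagging: your lower bound on the damping denominator requires the exact solution (hence the reference functions $U^{n,\ell}$) to be non-constant, a condition stated nowhere in \cref{prop:OE-1}. This is not a gap relative to the paper, because the OEDG lemma the paper cites rests on precisely the same lower bound, so the assumption is inherited there implicitly; your write-up simply makes visible that without it (say, $U$ constant while $u_h$ is not) the scale-invariant damping coefficients can be as large as $O(h^{-1})$ relative to $\tilde\xi^{n,\ell+1,*}$, and the stated bound cannot be reached by this argument.
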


\begin{proposition}\label{prop:xi-tilde}
    For $0 \leq \ell \leq s-1$, there exists a constant $C>0$ such that:
    \begin{equation}
        \nm{\tilde{\xi}^{n,\ell+1,*}}_* \leq C \left( \sum_{0 \leq \kappa \leq \ell} \nm{\xi^{n,\kappa,*}}_*  \right) + C\tau(h^{k+1}+\tau^r)\quad \forall n. 
    \end{equation}
\end{proposition}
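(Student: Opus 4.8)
The plan is to first derive an error equation for $\tilde{\xi}^{n,\ell+1,*}$ that parallels the formulation for $\xi^{n,\ell+1,*}$ in \cref{prop:OESV-xi}, but \emph{without} the oscillation-eliminating source term $\cF^{n,\ell+1}$, since $\tilde{u}_h^{n,\ell+1}$ is the pre-OE stage value. Concretely, I would subtract the reference-function identity \eqref{eq:SV-reffun} from the RK update \eqref{eq:linearRK} (with $g\equiv 0$), then substitute the error decomposition $u_h^{n,\kappa}-U^{n,\kappa}=\xi^{n,\kappa,*}-\eta^{n,\kappa,*}$ from \cref{def:SV-errdecompose} together with the identity $\tilde{u}_h^{n,\ell+1}-U^{n,\ell+1}=\tilde{\xi}^{n,\ell+1,*}-\eta^{n,\ell+1,*}$. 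The key simplification is the superconvergence property \eqref{eq:SV-supconprop} of $P^*$: since each $U^{n,\kappa}$ is continuous, $\sv{\eta^{n,\kappa,*}}{\omega}=0$ for all $\omega\in\mathbb{V}^k$, so every $\eta$-contribution routed through $\mathcal{H}^*$ vanishes. This yields, for every $\omega\in\mathbb{V}^k$,
\begin{equation*}
\begin{aligned}
\ip{\tilde{\xi}^{n,\ell+1,*}}{\omega}_*
&= \sum_{0\leq\kappa\leq\ell} c_{\ell\kappa}\ip{\xi^{n,\kappa,*}}{\omega}_*
+ \tau\sum_{0\leq\kappa\leq\ell} d_{\ell\kappa}\sv{\xi^{n,\kappa,*}}{\omega} \\
&\quad + \ip{\eta^{n,\ell+1,*}-\textstyle\sum_{0\leq\kappa\leq\ell}c_{\ell\kappa}\eta^{n,\kappa,*}}{M^*\omega}
- \tau\ip{\rho^{n,\ell+1}}{M^*\omega}.
\end{aligned}
\end{equation*}

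Next I would test this identity with $\omega=\tilde{\xi}^{n,\ell+1,*}$ and bound the four resulting terms, factoring out $\nm{\tilde{\xi}^{n,\ell+1,*}}_*$ from each. For the first term, the Cauchy--Schwarz inequality gives a bound $C(\sum_\kappa \nm{\xi^{n,\kappa,*}}_*)\nm{\tilde{\xi}^{n,\ell+1,*}}_*$. For the second, \eqref{eq:inequalityH} introduces a factor $\tau/h$, which the time-step constraint $\tau/h^\gamma\leq C_{\mathrm{CFL}}^*$ with $\gamma\geq 1$ (and $h\leq 1$) renders bounded, so this term is again controlled by $C(\sum_\kappa\nm{\xi^{n,\kappa,*}}_*)\nm{\tilde{\xi}^{n,\ell+1,*}}_*$. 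For the third and fourth terms I would use the boundedness $\nm{M^*}\leq C$ (\cref{prop:Mstar}) and the norm equivalence \eqref{eq:propipstar_2} to replace $\nm{M^*\tilde{\xi}^{n,\ell+1,*}}$ by $C\nm{\tilde{\xi}^{n,\ell+1,*}}_*$; then \cref{prop:SV-eta} bounds the $\eta$-difference by $C\tau h^{k+1}$ and \cref{prop:lte} bounds $\nm{\rho^{n,\ell+1}}$ by $C\tau^r$, producing the contributions $C\tau h^{k+1}\nm{\tilde{\xi}^{n,\ell+1,*}}_*$ and $C\tau^{r+1}\nm{\tilde{\xi}^{n,\ell+1,*}}_*$, respectively. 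Dividing through by $\nm{\tilde{\xi}^{n,\ell+1,*}}_*$ and collecting $\tau h^{k+1}+\tau^{r+1}=\tau(h^{k+1}+\tau^r)$ yields the claimed bound.

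The energy estimate in the second step is largely routine once the error equation is in hand, so I expect the main obstacle to be organizing the first step correctly: one must keep careful track of the distinction between $\ip{\cdot}{\cdot}_*$ (defined only on $\mathbb{V}^k$) and $\ip{\cdot}{M^*\cdot}$ (needed for the smooth quantities $U^{n,\kappa}$ and $\rho^{n,\ell+1}$), and invoke the superconvergence property exactly where the $\mathcal{H}^*$ terms appear so that the $\eta$-contributions survive only through the benign projection pairing $\ip{\cdot}{M^*\omega}$. I note that this error equation is precisely \eqref{eq:xi-scheme} of \cref{prop:OESV-xi} with the term $\tau\ip{\cF^{n,\ell+1}}{M^*\omega}$ removed, reflecting the identity $\xi^{n,\ell+1,*}=\tilde{\xi}^{n,\ell+1,*}+\tau\cF^{n,\ell+1}$; \cref{prop:xi-tilde} therefore supplies the pre-OE bound that, combined with \cref{prop:OE-1}, will close the estimate for $\xi^{n,\ell+1,*}$ in the subsequent induction.
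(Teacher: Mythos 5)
Your proposal is correct and follows essentially the same route as the paper: the paper derives exactly your error equation (with the $\eta$-difference and $\rho^{n,\ell+1}$ bundled into the source term $\cZ^{n,\ell+1,*}$, so that your last two terms read $\tau\ip{\cZ^{n,\ell+1,*}}{M^*\omega}$), tests with $\omega=\tilde{\xi}^{n,\ell+1,*}$, and concludes via Cauchy--Schwarz, \cref{prop:Mstar}, \eqref{eq:propipstar_2}, \eqref{eq:inequalityH}, and the bound $\nm{\cZ^{n,\ell+1,*}}\leq C(h^{k+1}+\tau^r)$ from \cref{prop:lte,prop:SV-eta}. Your observation that the $\tau/h$ factor from \eqref{eq:inequalityH} is absorbed under the standing CFL constraint matches how the paper implicitly uses the proposition as well.
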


Due to the established connection between the OESV and OEDG methods, 
the proofs of \Cref{prop:OE-1,prop:xi-tilde} directly follow from the analysis for OEDG schemes in \cite{Peng2023OEDGOD}[Section 4.4.2] and are thus omitted here. 

Now we derive our estimate for $\nm{\cF^{n,\ell+1}}$. 
\begin{proposition}\label{prop:OE}
    If $\nm{\tilde{\xi}^{n,\kappa,*}}_{L^\infty (\Omega)} \leq h$ for $1 \leq \kappa \leq \ell+1$ and $ h \leq h_*$, then 
    \begin{equation}\label{eq:propOE}
        \nm{\cF^{n,\ell+1}} \leq C \left( \nm{\xi^{n,*}}_* + h^{k+1} + \tau^r \right),
    \end{equation}
    where the constant $C>0$ is independent of $n$.
\end{proposition}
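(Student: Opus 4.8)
The plan is to combine \Cref{prop:OE-1} with \Cref{prop:xi-tilde} to bound $\nm{\cF^{n,\ell+1}}$ in terms of the accumulated errors $\{\nm{\xi^{n,\kappa,*}}_*\}_{0\le\kappa\le\ell}$, and then control these intermediate-stage errors by the single quantity $\nm{\xi^{n,*}}_*$. Since \Cref{prop:OE-1} requires only $\nm{\tilde{\xi}^{n,\ell+1,*}}_{L^\infty(\Omega)}\le h$ while the hypothesis here supplies this bound for all $1\le\kappa\le\ell+1$, the hypotheses of \Cref{prop:OE-1} are met, and I obtain
\begin{equation*}
    \nm{\cF^{n,\ell+1}} \leq C\left( \nm{\tilde{\xi}^{n,\ell+1,*}}_* + h^{k+1} \right).
\end{equation*}
Applying \Cref{prop:xi-tilde} to the right-hand side then yields
\begin{equation*}
    \nm{\cF^{n,\ell+1}} \leq C \left( \sum_{0 \leq \kappa \leq \ell} \nm{\xi^{n,\kappa,*}}_*  \right) + C\tau(h^{k+1}+\tau^r) + Ch^{k+1},
\end{equation*}
so it remains to bound each $\nm{\xi^{n,\kappa,*}}_*$ for $0\le\kappa\le\ell$ by $\nm{\xi^{n,*}}_*$ plus acceptable truncation terms.

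Next I would establish, by induction on $\kappa$, an estimate of the form $\nm{\xi^{n,\kappa,*}}_* \le C\nm{\xi^{n,*}}_* + C\tau(h^{k+1}+\tau^r)$, starting from the base case $\xi^{n,0,*}=\xi^{n,*}$. The key tool is the formulation \eqref{eq:xi-scheme} in \Cref{prop:OESV-xi}, which expresses $\xi^{n,\ell+1,*}$ through the earlier $\xi^{n,\kappa,*}$, the operator $\sv{\cdot}{\cdot}$, and the two source terms $\cF^{n,\ell+1}$ and $\cZ^{n,\ell+1,*}$. Testing \eqref{eq:xi-scheme} with $\omega = \xi^{n,\ell+1,*}$ and using the Cauchy--Schwarz inequality together with the weak boundedness $\sv{v}{\omega}\le Ch^{-1}\nm{v}_*\nm{\omega}_*$ from \eqref{eq:inequalityH}, the boundedness $\nm{M^*}\le C$ (\Cref{prop:Mstar}), and the norm equivalence \eqref{eq:propipstar_2}, I can bound $\nm{\xi^{n,\ell+1,*}}_*$ by the earlier-stage norms plus $\tau\nm{\cF^{n,\ell+1}}$ and $\tau\nm{\cZ^{n,\ell+1,*}}$. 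The term $\nm{\cZ^{n,\ell+1,*}}$ is controlled by \Cref{prop:SV-eta} (which gives the $O(\tau h^{k+1})$ bound on the difference of the $\eta$-terms) and \Cref{prop:lte} (which bounds $\rho^{n,\ell+1}$ by $C\tau^r$), yielding $\nm{\cZ^{n,\ell+1,*}}\le C(h^{k+1}+\tau^r)$.

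The subtlety is that $\cF^{n,\ell+1}$ itself depends on $\nm{\tilde{\xi}^{n,\ell+1,*}}_*$, hence on the very stage errors being estimated, creating an apparent circularity. I would resolve this by folding the $\cF$-contribution directly via \Cref{prop:xi-tilde} rather than the pointwise bound of \Cref{prop:OE-1}: at each stage the $\tau\nm{\cF^{n,\ell+1}}$ term contributes $C\tau\sum_{\kappa\le\ell}\nm{\xi^{n,\kappa,*}}_*$ plus higher-order truncation terms, and since this carries an explicit factor of $\tau$, it can be absorbed into the induction for $h\le h_{**}$ small and $\tau/h^\gamma\le C^*_{\mathrm{CFL}}$, without degrading the leading estimate. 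The inverse-inequality factor $h^{-1}$ arising from $\sv{\cdot}{\cdot}$ is likewise tamed because it always appears multiplied by $\tau$, and the CFL condition $\tau/h^\gamma\le C^*_{\mathrm{CFL}}$ keeps $\tau/h$ bounded.

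The main obstacle is handling this self-referential dependence cleanly: the source $\cF^{n,\ell+1}$ mixes into the recursion for $\{\xi^{n,\kappa,*}\}$, so I expect the delicate point to be arranging the induction over the finitely many RK stages $\kappa=0,\dots,\ell+1$ so that all coupling terms either carry a factor $\tau$ (and thus get absorbed by choosing $h_*$ small enough and invoking the CFL restriction) or reduce to the base quantity $\nm{\xi^{n,*}}_*$ and the truncation terms $h^{k+1}+\tau^r$. Once the uniform bound $\nm{\xi^{n,\kappa,*}}_*\le C\nm{\xi^{n,*}}_*+C\tau(h^{k+1}+\tau^r)$ is in place for all $\kappa\le\ell$, substituting back into the two displayed inequalities for $\nm{\cF^{n,\ell+1}}$ and collapsing the truncation contributions (using $\tau\le T$) gives precisely \eqref{eq:propOE}, completing the proof.
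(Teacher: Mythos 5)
Your proposal is correct, and its skeleton is the same as the paper's: apply \cref{prop:OE-1} (legitimate at every stage, since the hypothesis supplies the $L^\infty$ bounds for all $1\le\kappa\le\ell+1$) to get $\nm{\cF^{n,\ell+1}}\le C(\nm{\tilde{\xi}^{n,\ell+1,*}}_*+h^{k+1})$, convert $\tilde{\xi}^{n,\ell+1,*}$ into earlier-stage errors via \cref{prop:xi-tilde}, and then prove by induction over the RK stages that $\nm{\xi^{n,\kappa,*}}_*\le C_\kappa(\nm{\xi^{n,*}}_*+h^{k+1}+\tau^r)$. Where you diverge is in how the per-stage recursion is obtained: the paper simply uses the identity $\xi^{n,\kappa,*}=\tilde{\xi}^{n,\kappa,*}+\tau\cF^{n,\kappa}$ (immediate from the definition of $\cF^{n,\kappa}$), takes the triangle inequality, and then invokes \cref{prop:OE-1} and \cref{prop:xi-tilde}; you instead re-derive the recursion by testing the full error equation \eqref{eq:xi-scheme} of \cref{prop:OESV-xi} with $\omega=\xi^{n,\ell+1,*}$, which additionally pulls in the $\cZ^{n,\ell+1,*}$ source (bounded via \cref{prop:SV-eta} and \cref{prop:lte}), the weak boundedness of $\sv{\cdot}{\cdot}$, and the CFL restriction to tame the $h^{-1}$ factor. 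This works and yields the same recursion, but it is redundant: those ingredients are exactly what is already packaged inside \cref{prop:xi-tilde}, so the paper's route avoids touching \eqref{eq:xi-scheme} and the CFL condition in this proposition altogether. One small imprecision in your write-up: you say you fold in the $\cF$-contribution ``via \cref{prop:xi-tilde} rather than the pointwise bound of \cref{prop:OE-1},'' but \cref{prop:xi-tilde} bounds $\tilde{\xi}$, not $\cF$; converting $\tau\nm{\cF^{n,\kappa}}$ into stage errors unavoidably goes through \cref{prop:OE-1} first (this is also where your ``circularity'' dissolves, since $\tilde{\xi}^{n,\kappa,*}$ depends only on stages $\le\kappa-1$), and your final chain does use it this way, so the argument stands.
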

\begin{proof}
   Since $\nm{\tilde{\xi}^{n,\ell + 1,*}}_{L^\infty (\Omega)} \leq h$, we have $\nm{\cF^{n,\ell+1}} \leq C\left( \nm{\tilde{\xi}^{n,\ell + 1,*}}_* + h^{k+1} \right)$ by \cref{prop:OE-1}. According to \cref{prop:xi-tilde}, to prove \eqref{eq:propOE}, it suffices to show that
   \begin{equation}\label{eq:lemmaOE-1}
       \nm{\xi^{n,\kappa,*}}_* \leq C_{\kappa} \left( \nm{\xi^{n,*}}_* + h^{k+1} + \tau^r \right) \quad 1 \leq \kappa \leq \ell.
   \end{equation}
   To prove \eqref{eq:lemmaOE-1},  we first notice that
   \begin{equation*}\label{eq:lemmaOE-2}
       \begin{aligned}
           \nm{\xi^{n,1,*}}_* \leq \nm{\tilde{\xi}^{n,1,*}}_* + \tau \nm{\cF^{n,1}}_* & \leq C \left( \nm{\tilde{\xi}^{n,1,*}}_* + h^{k+1} + \tau^r \right) \\
           & \leq C_1 \left( \nm{\xi^{n,*}}_* + h^{k+1} + \tau^r \right),
       \end{aligned}
   \end{equation*}
   where we have used $\nm{\cF^{n,1}}_* \leq C \nm{\cF^{n,1}}$ (\cref{prop:ipstar}). If we assume that \eqref{eq:lemmaOE-1} holds for $1 \leq \kappa \leq \ell_0$, then 
   \begin{equation*}
       \begin{aligned}
           \nm{\xi^{n,\ell_0 + 1,*}}_* & \leq \nm{\tilde{\xi}^{n,\ell_0 + 1,*}}_* + \tau \nm{\cF^{n,\ell_0 + 1}}_* \leq C \left( \nm{\tilde{\xi}^{n,\ell_0 + 1,*}}_* + h^{k+1} + \tau^r \right) \\
           \leq C & \left( \sum_{0 \leq \kappa \leq \ell_0} \nm{\xi^{n,\kappa,*}}_* + h^{k+1} + \tau^r \right) \leq C_{\ell_0 + 1} \left( \nm{\xi^{n,*}}_* + h^{k+1} + \tau^r \right).
       \end{aligned}
   \end{equation*}
   Hence, \eqref{eq:lemmaOE-1} holds for all $1 \leq \kappa \leq \ell$ by mathematical induction. 
\end{proof}

\subsubsection{Proof of \eqref{eq:err-OESV}}
According to the approximation property of $P^*$, we have
\begin{equation}\label{eq:SV-eta-1}
    \nm{\eta^{n,*}} \leq Ch^{k+1}, \quad 0 \leq n \leq \nn.
\end{equation}
Hence, to prove \eqref{eq:err-OESV}, we only need to show the following proposition.  
\begin{proposition}\label{prop:xi-final}
   Under the assumption of \Cref{thm:err-OESV}, there exists two constants $C_{**}>0$ and $h_{**}>0$ such that
    \begin{equation}\label{eq:xi-final}
        \max_{0\leq n\leq\nn} \nm{\xi^{n,*}}_*^2 \leq C_{**}\left( h^{2k+2} + \tau^{2r} \right),
    \end{equation}
    whenever $\frac{\tau}{h^\gamma} \leq C_{\mathrm{CFL}}^{*}$ and $h \leq h_{**}$.
\end{proposition}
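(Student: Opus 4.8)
The plan is to exploit \Cref{prop:OESV-xi}, which shows that the projected error $\xi^{n,\kappa,*}$ satisfies \emph{exactly} an RKSV scheme of the form \eqref{eq:linearRKSV}, now driven by the two source terms $\cF^{n,\ell+1}$ and $\cZ^{n,\ell+1,*}$. Because these sources enter \eqref{eq:xi-scheme} through the pairing $\ip{\cdot}{M^*\omega}$ in precisely the same way as $g^{n,\kappa}$ does in the RKSV scheme, the energy stability analysis of \Cref{thm:stability-RKSV} carries over to the $\xi$-scheme and yields the one-step estimate
\begin{equation*}
    \nm{\xi^{n+1,*}}_*^2 \leq (1+C\tau)\nm{\xi^{n,*}}_*^2 + \hat{C}\tau\sum_{0\leq\ell<s}\nm{\cF^{n,\ell+1}+\cZ^{n,\ell+1,*}}^2.
\end{equation*}
From here the whole proof reduces to bounding the two sources and then running a discrete Gronwall argument over $0\leq n\leq\nn$ with $\nn\tau=T$. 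I would proceed by induction on $n$, assuming that \eqref{eq:xi-final} holds up to step $n$ and establishing it at step $n+1$.

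The term $\cZ^{n,\ell+1,*}$ is the benign one: dividing the bound of \Cref{prop:SV-eta} by $\tau$ and combining it with the local truncation error estimate of \Cref{prop:lte} gives $\nm{\cZ^{n,\ell+1,*}}\leq C(h^{k+1}+\tau^r)$, independently of $\xi$. The term $\cF^{n,\ell+1}$ is the genuinely nonlinear one and is controlled by \Cref{prop:OE}, which yields $\nm{\cF^{n,\ell+1}}\leq C(\nm{\xi^{n,*}}_*+h^{k+1}+\tau^r)$, but only under the a priori smallness assumption $\nm{\tilde{\xi}^{n,\kappa,*}}_{L^\infty(\Omega)}\leq h$. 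Granting this assumption for the moment, the squared source is bounded by $C(\nm{\xi^{n,*}}_*^2+h^{2k+2}+\tau^{2r})$, so the one-step estimate becomes $\nm{\xi^{n+1,*}}_*^2\leq(1+C\tau)\nm{\xi^{n,*}}_*^2+C\tau(h^{2k+2}+\tau^{2r})$. A standard discrete Gronwall inequality (using $n\tau\leq T$) then propagates the bound, while the initial contribution is handled by $\nm{\xi^{0,*}}_*\leq Ch^{k+1}$, which follows from the hypothesis $\nm{u_h^0-u(\cdot,0)}\leq Ch^{k+1}$, the approximation property \eqref{eq:SV-eta-1}, and the norm equivalence \eqref{eq:propipstar_2}.

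The main obstacle is verifying the a priori assumption $\nm{\tilde{\xi}^{n,\kappa,*}}_{L^\infty(\Omega)}\leq h$, which I must establish as part of the same induction in order to legitimately invoke \Cref{prop:OE}. The key tool is the inverse inequality \eqref{eq:normequiv}: combined with \Cref{prop:xi-tilde} and the intermediate bound $\nm{\xi^{n,\kappa,*}}_*\leq C(\nm{\xi^{n,*}}_*+h^{k+1}+\tau^r)$ appearing in the proof of \Cref{prop:OE}, the induction hypothesis $\nm{\xi^{n,*}}_*\leq C(h^{k+1}+\tau^r)$ gives $\nm{\tilde{\xi}^{n,\kappa,*}}_{L^\infty(\Omega)}\leq Ch^{-1/2}(h^{k+1}+\tau^r)$. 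This is $\leq h$ precisely when $h^{k+1}+\tau^r\leq Ch^{3/2}$; since $k\geq1$ forces $h^{k+1}\leq h^2$, and $r\geq2$ together with $\frac{\tau}{h^\gamma}\leq C_{\mathrm{CFL}}^*$ (with $\gamma\geq1$) forces $\tau^r\leq Ch^{\gamma r}\leq Ch^2$, this holds for all sufficiently small $h\leq h_{**}$. This bootstrap is exactly where the structural hypotheses $k\geq1$, $r\geq2$, and $h\leq h_{**}$ of \Cref{thm:err-OESV} are consumed, and it is the delicate point of the argument: the smallness of $\tilde{\xi}$ required to apply the nonlinear OE estimate is available only because the error being estimated is already known, inductively, to be of optimal order.
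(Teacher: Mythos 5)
Your proposal is correct and follows essentially the same route as the paper's proof: both treat the $\xi$-scheme of \cref{prop:OESV-xi} as an RKSV scheme with sources $\cF^{n,\ell+1}$ and $\cZ^{n,\ell+1,*}$, invoke the assumed stability \eqref{eq:stability-RKSV} plus discrete Gronwall, bound $\cZ$ via \cref{prop:lte,prop:SV-eta} and $\cF$ via \cref{prop:OE}, and close the bootstrap by verifying the a priori smallness $\nm{\tilde{\xi}^{n,\kappa,*}}_{L^\infty(\Omega)}\leq h$ through the inverse inequality \eqref{eq:normequiv}, the induction hypothesis, and the hypotheses $k\geq 1$, $r\geq 2$, $h\leq h_{**}$. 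You also correctly identify the delicate point (the nonlinear OE estimate is usable only because the error is inductively known to be of optimal order), which is exactly the structure of the paper's induction on $n$ with its stage-by-stage verification of the $L^\infty$ bounds.
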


\begin{proof}  
We first consider the case that $n=0$. Since we assume that $\nm{u_h^0 - U^0} \leq Ch^{k+1}$, the approximation property of $P^*$ implies there exists $C_*>0$ such that
\begin{equation}\label{eq:05152024}
    \nm{\xi^{0,*}}_*^2 \leq C\left( \nm{u_h^0 - U^0}^2 + \nm{\eta^{0,*}}^2  \right) \leq C_*\left( h^{2k+2} + \tau^{2r} \right).
\end{equation}
Under the time step constraint $\frac{\tau}{h^{\gamma}} \leq C_{\mathrm{CFL}}^{*}$,   \cref{prop:xi-tilde} yields 
\begin{equation*}\label{eq:error-2}
    \nm{\tilde{\xi}^{0,1,*}} \leq C\nm{\tilde{\xi}^{0,1,*}}_* \leq C \left( h^{k+1} + \tau^{r} \right) \leq C \left( h^{k+1} + h^{\gamma r} \right).
\end{equation*}
Applying the inverse inequality \eqref{eq:normequiv} gives $\nm{\tilde{\xi}^{0,1,*}}_{L^\infty(\Omega)} \leq C \left( h^{k+\hf} + h^{\gamma r - \hf} \right)$. Since $k>\hf$ and $\gamma r \geq r > \frac{3}{2}$, there exists a constant  $ h_0^1 \in (0, h_*]$ such that 
\begin{equation}\label{eq:error-4}
    h \leq h_0^1 ~ \Rightarrow ~ \nm{\tilde{\xi}^{0,1,*}}_{L^\infty(\Omega)} \leq h.
\end{equation}
Suppose that there exists $h_0^\ell \in (0, h_*]$ such that
\begin{equation*}\label{eq:error-5}
    h \leq h_0^\ell ~ \Rightarrow ~ \nm{\tilde{\xi}^{0,\kappa,*}}_{L^\infty(\Omega)} \leq h \quad 1 \leq \kappa \leq \ell.
\end{equation*}
When $h\leq h_0^\ell$,  using  \cref{prop:xi-tilde} and \eqref{eq:lemmaOE-1} gives 
\begin{equation*}\label{eq:error-6}
    \begin{aligned}
        \nm{\tilde{\xi}^{0,\ell+1,*}} & \leq \nm{\tilde{\xi}^{0,\ell+1,*}}_* \leq C \left( \sum_{0 \leq \kappa \leq \ell} \nm{\xi^{0,\kappa,*}}_* + h^{k+1} + \tau^r \right) \\
        & \leq C \left(\nm{\xi^{0,*}}_* + h^{k+1} + \tau^r \right) \leq C \left( h^{k+1} + \tau^{r} \right) \leq C \left( h^{k+1} + h^{\gamma r} \right). 
    \end{aligned}
\end{equation*}
Hence, similar to the proof of \eqref{eq:error-4}, we can also find $h_0^{\ell + 1} \in (0, h_0^\ell]$ such that
\begin{equation}\label{eq:error-7}
    h \leq h_0^{\ell+1} ~ \Rightarrow ~ \nm{\tilde{\xi}^{0,\kappa,*}}_{L^\infty(\Omega)} \leq h \quad 1 \leq \kappa \leq \ell+1.
\end{equation}
Combining \eqref{eq:error-4} with \eqref{eq:error-7}, we can conclude by the induction hypothesis that there exists a constant $h_0^s \in (0, h_*]$ such that
\begin{equation}\label{eq:h_0^s}
    h \leq h_0^s ~ \Rightarrow ~ \nm{\tilde{\xi}^{0,\ell+1,*}}_{L^\infty(\Omega)} \leq h \quad 0 \leq \ell \leq s-1.
\end{equation}
It then follows from  \cref{prop:OE} and \eqref{eq:05152024} that $\nm{\cF^{0,\ell+1}} \leq C \left( h^{k+1} + \tau^r \right)$ for $0 \leq \ell \leq s-1$ if $h \leq h_0^s$. Notice that \cref{prop:lte,prop:SV-eta} imply 
\begin{equation}\label{eq:05151721}
    \sum_{0\leq \ell<s}\nm{\cZ^{n,\ell+1,*}}^2 \leq C\left( h^{2k+2} + \tau^{2r} \right) \quad 0 \leq n \leq \nn.
\end{equation}
Since we assume that the corresponding RKSV scheme is stable, using \eqref{eq:stability-RKSV} and the discrete Gronwall inequality, we obtain 
\begin{equation}\label{eq:error-8}
    \begin{aligned}
        \nm{\xi^{1,*}}_*^2 & \leq (1+C_{s}^* \tau) \nm{\xi^{0,*}}_*^2 + \hat{C}_{s}^*\tau(h^{2k+2} + \tau^{2r}) \\
        & \leq e^{C_{s}^* T}\left( \nm{\xi^{0,*}}_*^2 + \hat{C}_{s}^{*} T(h^{2k+2} + \tau^{2r}) \right) \leq C_{**}\left( h^{2k+2} + \tau^{2r} \right),
    \end{aligned}
\end{equation}
where $C_{**} := e^{C_{s}^* T}(C_* + \hat{C}_{s}^{*} T) \geq C_*$.

Next, for $n=1$, adopting the above procedure to $\{ \tilde{\xi}^{1,\kappa,*} \}$, we can derive by \eqref{eq:error-8} that there exists a constant $h_{**} \in (0, h_0^s]$ such that
\begin{equation}\label{eq:error-10}
     h \leq h_{**} ~ \Rightarrow ~ \nm{\tilde{\xi}^{1,\ell+1,*}}_{L^\infty(\Omega)} \leq h, \quad 0 \leq \ell \leq s-1.
\end{equation}
Thus, if $h \leq h_{**}$, similar to \eqref{eq:error-8}, we have
\begin{equation}\label{eq:07052024-1}
    \begin{aligned}
        \nm{\xi^{2,*}}_*^2 & \leq (1+C_{s}^* \tau)^{2} \nm{\xi^{0,*}}_*^2 + \hat{C}_{s}^*\tau \sum_{n=0}^{1}(1+C_{s}^* \tau)^n (h^{2k+2} + \tau^{2r}) \\
        & \leq e^{C_{s}^* T}\left( \nm{\xi^{0,*}}_*^2 + \hat{C}_{s}^{*} T(h^{2k+2} + \tau^{2r}) \right) \leq C_{**}\left( h^{2k+2} + \tau^{2r} \right).
    \end{aligned}
\end{equation}
Now suppose that $\nm{\xi^{n,*}}_*^2 \leq C_{**}\left( h^{2k+2} + \tau^{2r} \right)$ for $n=n_0\geq 2$, repeat the similar arguments as the case of $n=1$, we can show that
\begin{equation}\label{eq:error-9}
    h \leq h_{**} ~ \Rightarrow ~ \nm{\tilde{\xi}^{n_0,\ell+1,*}}_{L^\infty(\Omega)} \leq h, \quad 0 \leq \ell \leq s-1.
\end{equation}
As a result, \begin{equation}\label{eq:05152024-1}
    \begin{aligned}
        \nm{\xi^{n_0+1,*}}_*^2 & \leq (1+C_{s}^* \tau)^{n_0+1} \nm{\xi^{0,*}}_*^2 + \hat{C}_{s}^*\tau \sum_{n=0}^{n_0}(1+C_{s}^* \tau)^n (h^{2k+2} + \tau^{2r}) \\
        & \leq e^{C_{s}^* T}\left( \nm{\xi^{0,*}}_*^2 + \hat{C}_{s}^{*} T(h^{2k+2} + \tau^{2r}) \right) \leq C_{**}\left( h^{2k+2} + \tau^{2r} \right).
    \end{aligned}
\end{equation}
Combining \eqref{eq:05152024} with \eqref{eq:07052024-1} and \eqref{eq:05152024-1}, we complete the proof of \cref{prop:xi-final} by the induction hypothesis.
\end{proof}

\subsection{Further discussions on the upwind condition \eqref{eq:UWcon}}
From the above analysis, we have seen that the upwind condition \eqref{eq:UWcon} is important for the stability of the RKSV scheme and the optimal convergence rate of our OESV method. In this subsection, we further discuss this important condition. 
\begin{theorem}\label{thm:pik}
    Under \cref{assump:subdivision}, the upwind condition \eqref{eq:UWcon} holds if and only if in each $I_i$, the subdivision points $\{ x_{i,j} \}_{j=1}^{k}$ are the $k$ distinct zeros of $p_i^k$, where 
    \begin{equation}\label{eq:pik}
        p_i^k = L_{i,k}+\frac{c}{k(1-c)}\left( x-x_{i-\hf} \right)L_{i,k}^{'} \quad \mbox{with} \quad c<1.
    \end{equation}
\end{theorem}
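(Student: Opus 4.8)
The plan is to reduce the upwind condition \eqref{eq:UWcon} to a single orthogonality requirement on the interior‑node polynomial, then to identify the resulting one‑parameter family of admissible polynomials with the zeros of $p_i^k$, and finally to use part (2) of \cref{assump:subdivision} to fix the range of the parameter. Throughout I fix a cell $I_i$ and write $\pi_i(x)=\prod_{j=1}^{k}(x-x_{i,j})$ for the monic degree-$k$ polynomial vanishing at the interior subdivision points, together with $\psi_i(x)=(x-x_{i+\hf})\pi_i(x)$, a degree-$(k+1)$ polynomial satisfying $\psi_i(x_{i+\hf})=0$. The whole statement will be proved as a chain of equivalences, so that both directions of the ``iff'' come out at once.

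First I would translate the upwind weight into an orthogonality condition. Since (for $k\ge 2$) exactness on $\mathbb{P}^{2k-1}$ forces $Q_i^k$ to be the interpolatory quadrature on the $k+2$ nodes, the weight $A_{i,0}$ equals $\ipi{1}{\ell_0}$ for the Lagrange basis function $\ell_0$ at $x_{i-\hf}$, and a direct computation gives $\ell_0=\psi_i/\big((x_{i-\hf}-x_{i+\hf})\pi_i(x_{i-\hf})\big)$, whence $A_{i,0}=0\Leftrightarrow \ipi{\psi_i}{1}=0$. On the other hand, part (1) of \cref{assump:subdivision} is exactly the statement that the full node polynomial $(x-x_{i-\hf})\psi_i$ is $L^2(I_i)$-orthogonal to $\mathbb{P}^{k-3}(I_i)$. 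Combining these two facts and using the direct-sum decomposition $\mathrm{span}\{1\}\oplus(x-x_{i-\hf})\mathbb{P}^{k-3}(I_i)=\mathbb{P}^{k-2}(I_i)$ (an elementary dimension count, since the second summand vanishes at $x_{i-\hf}$), I obtain the clean equivalent form: under part (1) of \cref{assump:subdivision}, the upwind condition holds iff $\ipi{\psi_i}{q}=0$ for all $q\in\mathbb{P}^{k-2}(I_i)$.

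Next I would solve this orthogonality condition explicitly. The space $\{p\in\mathbb{P}^{k+1}(I_i):p\perp\mathbb{P}^{k-2}(I_i)\}$ is spanned by $L_{i,k-1},L_{i,k},L_{i,k+1}$; imposing $\psi_i(x_{i+\hf})=0$, i.e.\ that the three coefficients sum to zero (because $L_{i,\ell}(x_{i+\hf})=1$), leaves a one-parameter family up to scaling. I would then verify, after the affine map $I_i\to[-1,1]$ and using the standard Legendre identities $(s-1)P_k=\frac1{2k+1}\big((k+1)P_{k+1}-(2k+1)P_k+kP_{k-1}\big)$ and $(s^2-1)P_k'=\frac{k(k+1)}{2k+1}(P_{k+1}-P_{k-1})$, that $(x-x_{i+\hf})p_i^k$ lies in this span with $L_{i,k}$-coefficient proportional to $-(2k+1)\ne 0$. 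Conversely, any admissible $\psi_i$ has nonzero $L_{i,k}$-coefficient unless it vanishes at both endpoints, which would force $x_{i-\hf}$ to be a root of $\pi_i$ and is excluded for an interior subdivision. Hence $\psi_i\propto(x-x_{i+\hf})\,p_i^k$, i.e.\ $\pi_i\propto p_i^k$, for a unique $c\ne 1$; the restriction $c\ne 1$ is precisely what keeps $\deg p_i^k=k$, as the leading coefficient of $p_i^k$ equals that of $L_{i,k}$ times $1+\tfrac{ck}{k(1-c)}=\tfrac1{1-c}$.

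Finally I would pin down the sign constraint using part (2) of \cref{assump:subdivision}. Combining \cref{lemma:Mstar-difference} with the exactness of $Q_i^k$ reduces $\ipi{L_{i,k}}{M_i^*L_{i,k}}$ to the single remainder $R_i^k(L_{i,k+1}L_{i,k-1})$ as in \eqref{eq:normLik}; evaluating this remainder through the interpolation identity $L_{i,k+1}L_{i,k-1}-\mathcal I(L_{i,k+1}L_{i,k-1})=\psi_i\,w$ with $w\in\mathbb{P}^{k-1}(I_i)$, and using $\psi_i\perp\mathbb{P}^{k-2}(I_i)$ together with the coefficients of $\psi_i$ found above, I expect the parameter dependence to collapse to the remarkably clean formula $\ipi{L_{i,k}}{M_i^*L_{i,k}}=\tfrac{h_i(1-c)}{k+1}$. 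Thus part (2) of \cref{assump:subdivision} is equivalent to $c<1$, which closes both directions of \eqref{eq:pik}. I expect the main obstacle to be exactly this last computation: correctly reducing the degree-$2k$ quadrature remainder to the coefficients of $\psi_i$ and confirming that all $\lambda$-dependence cancels except for the factor $(1-c)$, since it is this cancellation that makes the positivity in \cref{assump:subdivision} coincide with $c<1$. A secondary technical point, to be treated separately, is the genuine existence of a suitable $Q_i^k$ in the low-order case $k=1$, where exactness on $\mathbb{P}^{2k-1}=\mathbb{P}^{1}$ no longer determines the weights and the ``$\exists\,Q_i^k$'' in \eqref{eq:UWcon} must be read as an honest existence statement rather than a property of a unique interpolatory rule.
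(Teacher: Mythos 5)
Your proposal is correct and is essentially the paper's own argument: the paper likewise reduces the upwind condition \eqref{eq:UWcon} to the statement that the node polynomial $(x-x_{i+\hf})\prod_{j=1}^{k}(x-x_{i,j})$ is a combination of $L_{i,k-1},L_{i,k},L_{i,k+1}$ orthogonal to $\mathbb{P}^{k-2}(I_i)$ (citing quadrature theory rather than deriving it via the Lagrange weight and direct-sum argument as you do), then uses the same two Legendre identities \eqref{eq:pik1}--\eqref{eq:pik2} to identify this family with $(1-c)\left(x-x_{i+\hf}\right)p_i^k$, and ties part (2) of \cref{assump:subdivision} to $c<1$ through \eqref{eq:normLik}. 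Your additions are expository rather than structural — the explicit reduction to $(x-x_{i+\hf})\prod_{j=1}^{k}(x-x_{i,j})\perp\mathbb{P}^{k-2}(I_i)$, the exclusion of the degenerate direction $L_{i,k+1}-L_{i,k-1}$, the flagging of the $k=1$ case, and the remainder evaluation $\ipi{L_{i,k}}{M_i^*L_{i,k}}=\frac{h_i(1-c)}{k+1}$, which is indeed correct and makes the $c<1$ equivalence that the paper merely asserts fully transparent.
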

\begin{proof}
    Let $x_i=\frac{x_{i-\hf}+x_{i+\hf}}{2}$. The properties of the Legendre polynomials yield
    \begin{subequations}
        \begin{align}
            & (k+1)L_{i,k+1} = \frac{2(2k+1)}{h_i}(x-x_i)L_{i,k} - kL_{i,k-1}, \label{eq:pik1} \\
            & \frac{\left( x-x_{i-\hf} \right) \left( x-x_{i+\hf} \right)}{k}L_{i,k}^{'} = (x-x_i)L_{i,k} - \frac{2}{hi}L_{i,k-1}. \label{eq:pik2}
        \end{align}
    \end{subequations}
    By \eqref{eq:pik1}, we have 
    \begin{equation}\label{eq:pik3}
        \frac{h_i}{2k-1} - Q_i^k(L_{i,k+1}L_{i,k-1})>0 ~ \Leftrightarrow ~ \frac{h_i}{2k-1} - \frac{2}{h_i}Q_i^k\left( (x-x_i)L_{i,k}L_{i,k-1} \right)>0.
    \end{equation}
    According to \eqref{eq:pik3} and the quadrature theory \cite{Brass2011QuadratureTT}, the upwind condition \eqref{eq:UWcon} holds if and only if for every $i$, there exists $a_i \neq 0$ such that 
    \begin{equation*}
        a_i\left( x-x_{i,1} \right)\cdots\left( x-x_{i,k+1} \right) = (x-x_i)L_{i,k} + \frac{(c-1)h_i}{2}L_{i,k} - \frac{ch_i}{2}L_{i,k-1},\quad c<1.
    \end{equation*}
    Thus, by \eqref{eq:pik2} we have $(x-x_i)L_{i,k} + \frac{(c-1)h_i}{2}L_{i,k} - \frac{ch_i}{2}L_{i,k-1} = (1-c)\left( x-x_{i,k+1} \right)p_i^k$. Hence, the upwind condition holds if and only if $\left( x-x_{i,1} \right)\cdots\left( x-x_{i,k} \right) = \frac{p_i^k}{a_i}$.
\end{proof}

\begin{remark}
    The semi-discrete SV schemes based on the zeros of $p_i^k$ are exactly those studied by Lu, Jiang, Shu, and Zhang in \cite{LuAnalysisOA}. They proved in \cite{LuAnalysisOA} that $p_i^k$ has $k$ distinct zeros within the interior of $I_i$ if $c>-\frac{1}{k}$. See \cite{LuAnalysisOA} for more details.
\end{remark}

\section{Numerical tests}
This section presents several 1D and 2D numerical examples to validate the accuracy and effectiveness of the OESV scheme proposed in \cref{sec:OESV}.  
For smooth problems, we couple the $\mathbb{P}^k$ or $\mathbb{Q}^k$-based OESV scheme with a $(k+1)$th-order explicit RK time discretization to verify the $(k+1)$th-order accuracy; for problems with discontinuities, we apply the third-order strong-stability-preserving explicit RK method for time discretization. The Gauss quadrature points are chosen as the subdivision points of the OESV scheme. 
For both $\mathbb{P}^k$ and $\mathbb{Q}^k$-based OESV schemes, we set the CFL number as  $C_{\mathrm{CFL}}=\frac{1}{2k+1}$ unless otherwise stated. More numerical examples are provided in \Cref{app:numex}. 

\subsection{1D and 2D linear advection equations}

\begin{exmp}[smooth problem]\label{ex:1Dadvec}
    This example is used to validate the optimal convergence rate of the OESV schemes for the  advection equation $u_t+u_x=0$  on $\Omega = [0,1]$ with periodic boundary conditions. 
    The initial condition is $u_0(x)=\sin^2(2 \pi x)$. The numerical errors and the corresponding convergence rates for the $\mathbb{P}^k$-based OESV scheme at time $t=1.1$ are listed in \cref{table:test1}. We observe that the $\mathbb P^k$-based OESV scheme exhibits an optimal $(k+1)$th-order convergence rate. As also observed in the OEDG method \cite{Peng2023OEDGOD}, the error is dominated by the high-order damping effect of the OE procedure on the coarser meshes, yielding a rate higher than $k+1$ for smaller $N_x$. 

              \begin{table}[!tbh]
    	\centering
    	\begin{center}
    		\caption{Errors and convergence rates of $\mathbb P^k$-based OESV scheme.}\label{table:test1}
    		\begin{tabular}{c|c|c|c|c|c|c|c} 
    			\bottomrule[1.0pt]
    			$k$	&	$N_x$ &$L^1$ error& rate & $L^2$ error & rate & $L^\infty$ error &  rate   \\ \hline \multirow{6}*{$1$}

    			&512&    7.23e-05&	-&	8.24e-05&	-&	1.65e-04&-\\ 
    			&1024&  1.65e-05&	2.13&	1.84e-05&	2.16&	3.42e-05&	2.27\\ 
    			&2048&  4.00e-06&	2.04&	4.45e-06&	2.05&	7.77e-06&	2.14\\ 
    			&4096&  9.92e-07&	2.01&	1.10e-06&	2.01&	1.86e-06&	2.06\\ 
    			&8192&  2.47e-07&	2.00&	2.75e-07&	2.00&	4.55e-07&	2.03\\

    			\hline \multirow{6}*{$2$}

    			&256&    9.71e-07&	-&	1.08e-06&	-&	2.64e-06&	-\\ 
    			&512&    7.33e-08&	3.73&	8.32e-08&	3.70&	2.54e-07&	3.38\\ 
    			&1024&  6.59e-09&	3.48&	7.77e-09&	3.42&	2.80e-08&	3.19\\ 
    			&2048&  6.84e-10&	3.27&	8.36e-10&	3.22&	3.31e-09&	3.08\\ 
    			&4096&  7.75e-11&	3.14&	9.74e-11&	3.10&	4.07e-10&	3.02\\ 
    			
    			\hline \multirow{6}*{$3$}		
   
    			&128&    1.14e-07&	-&	1.28e-07&	-&	2.80e-07&	-\\ 
    			&256&    3.64e-09&	4.96&	4.13e-09&	4.95&	1.16e-08&	4.60\\ 
    			&512&    1.20e-10&	4.93&	1.41e-10&	4.87&	5.42e-10&	4.42\\ 
    			&1024&  4.44e-12&	4.75&	5.57e-12&	4.66&	2.85e-11&	4.25\\ 
    			&2048&  2.31e-13&	4.27&	3.14e-13&	4.15&	1.83e-12&	3.96\\ 			
    			
    			\toprule[1.0pt]
    		\end{tabular}
    	\end{center}
    \end{table}

\end{exmp}

\begin{exmp}[pentagram discontinuities]\label{ex:2Dadvec2}
	This example simulates the 2D linear advection equation $u_t+u_x+u_y = 0$ on the spatial domain $\Omega=[0,1]^2$ with periodic boundary conditions and the following discontinuous initial data:
	\begin{equation*}
		u_0(x,y)=
		\begin{cases}
			1 ,\quad& r\leq \frac{1}{8}(3+3^{\sin(5\theta)}),\\
			0,\quad & \text{otherwise},\\
		\end{cases}
		\qquad \theta =
		\begin{cases}
			{\rm arccos}(\frac{x}{r}) ,\quad & y\geq 0,\\
			2\pi-{\rm arccos}(\frac{x}{r}),\quad &y<0,\\
		\end{cases}
	\end{equation*}	
	where $r = \sqrt{x^2+y^2}$. We divide  $\Omega$ into $320\times320$ uniform rectangular cells and conduct the simulation by the OESV schemes up to $t=1.8$.  \cref{fig:2Dadvec2} shows that the numerical solutions by the OESV schemes, which effectively capture the structure of the pentagram-shaped discontinuities.
	
	\begin{figure}[!t]
		\centering

		\includegraphics[width=.32\textwidth]{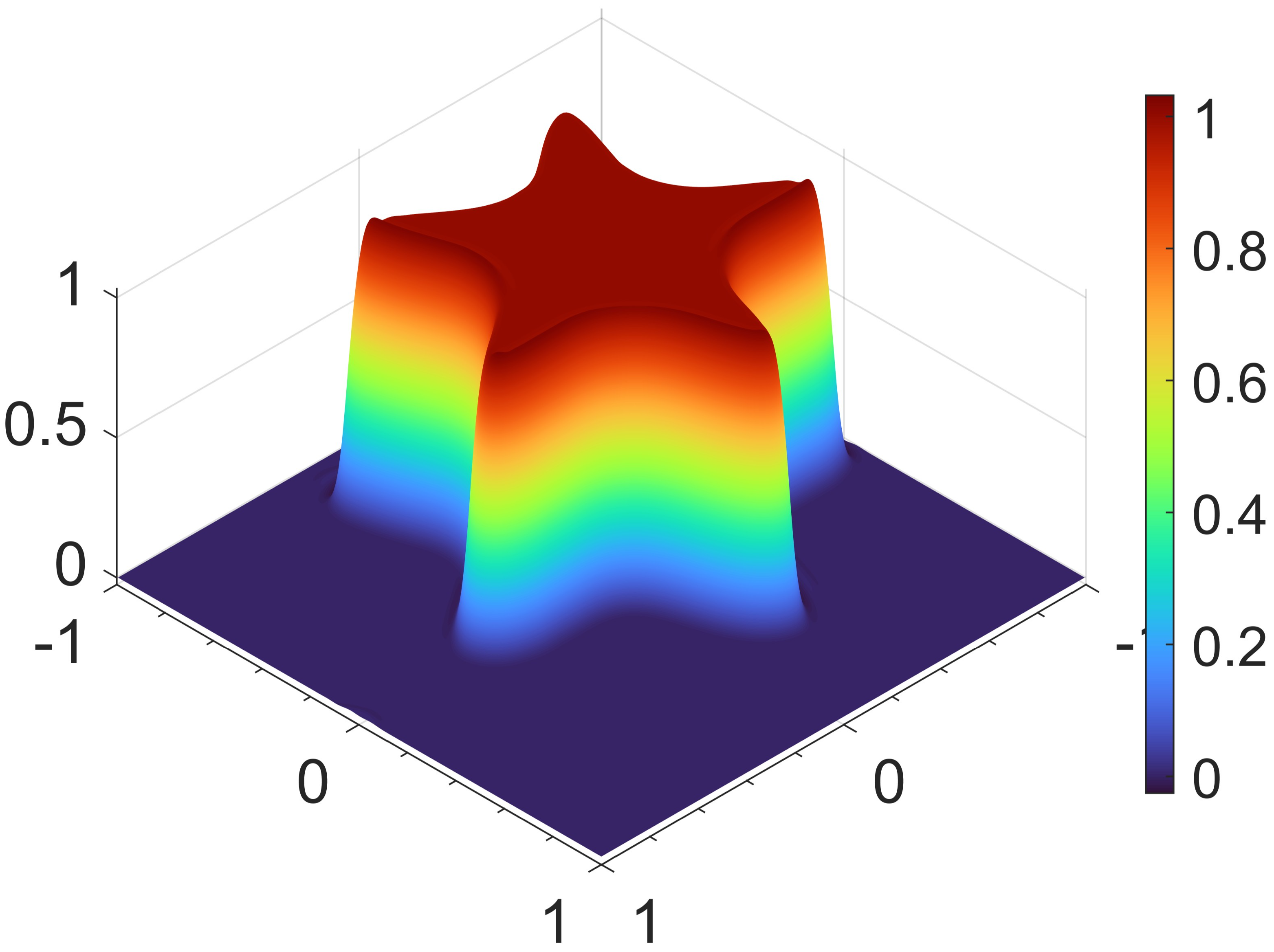}\hfill 
		\includegraphics[width=.32\textwidth]{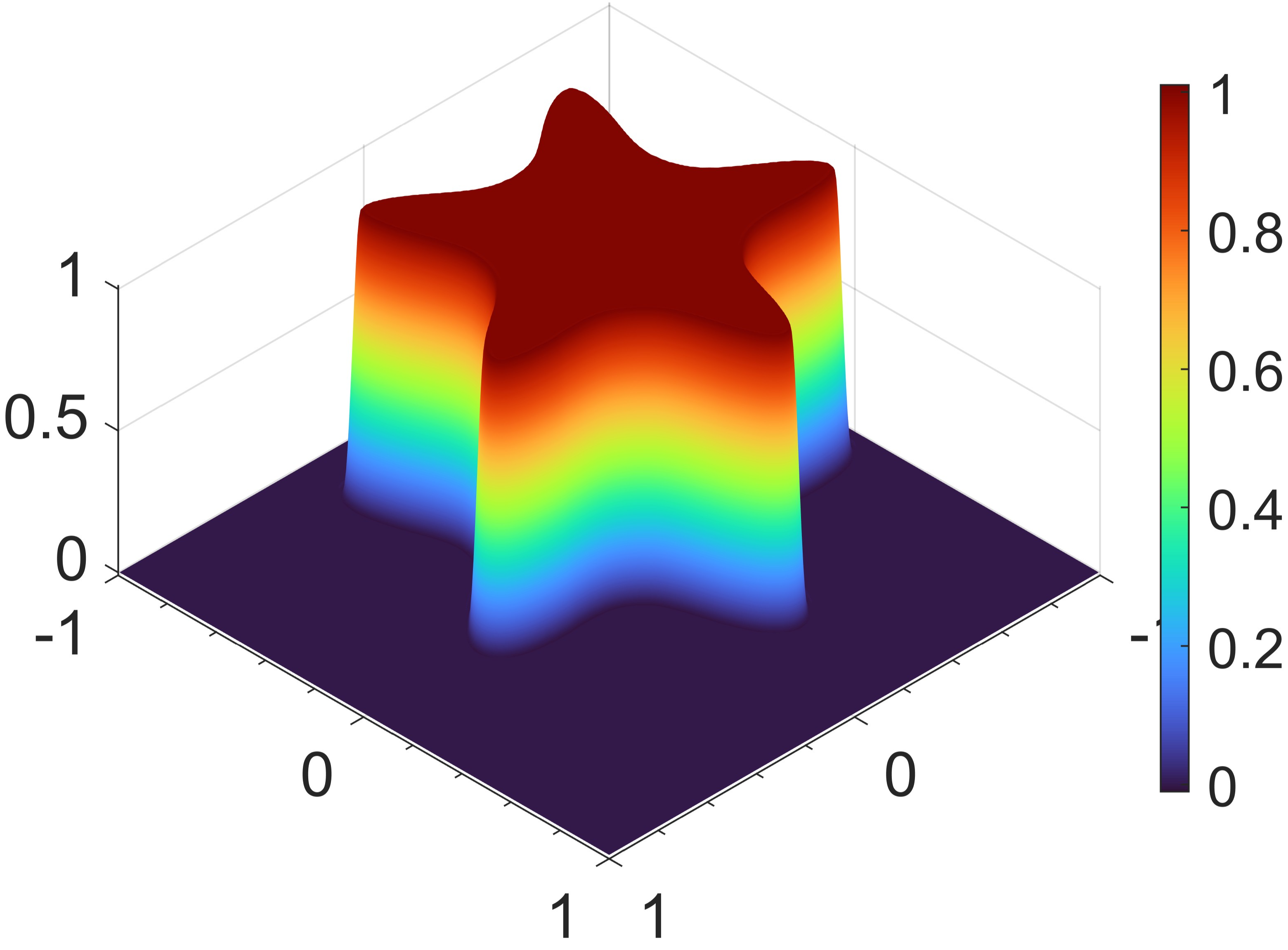}\hfill 
		\includegraphics[width=.32\textwidth]{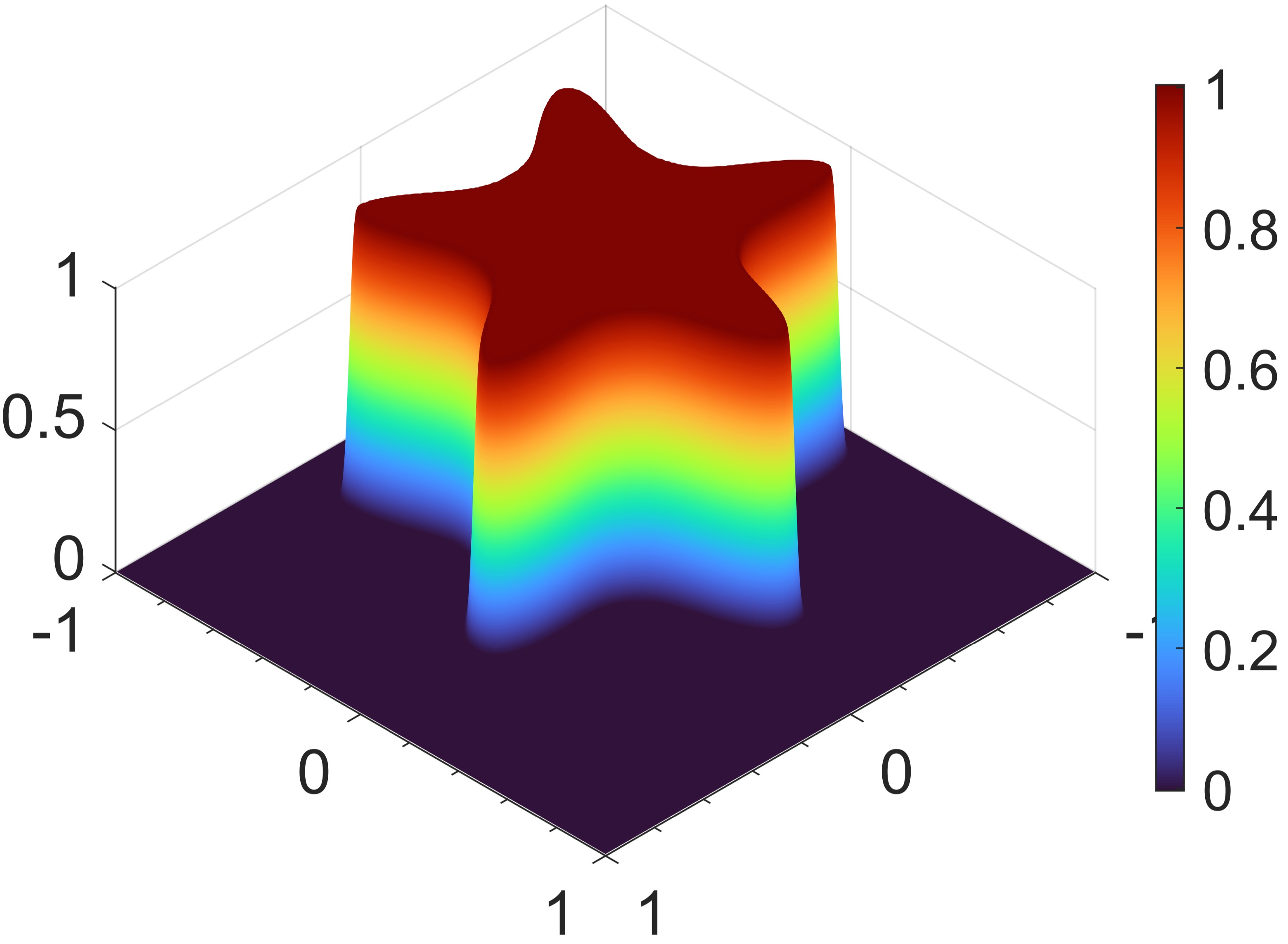}

		\includegraphics[width=.32\textwidth]{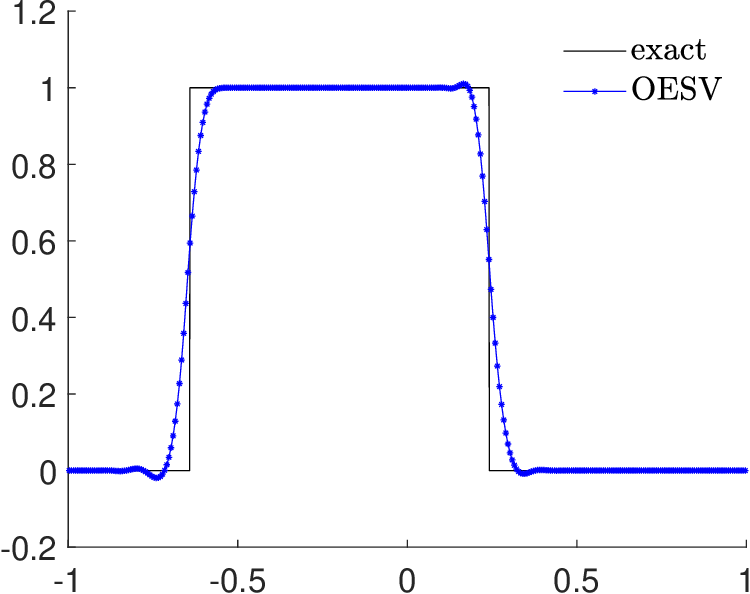}\hfill 
		\includegraphics[width=.32\textwidth]{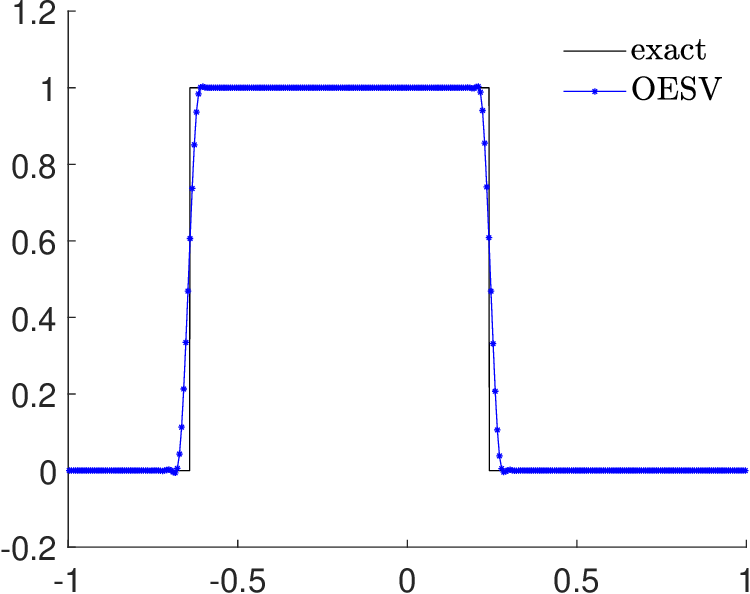}\hfill 
		\includegraphics[width=.32\textwidth]{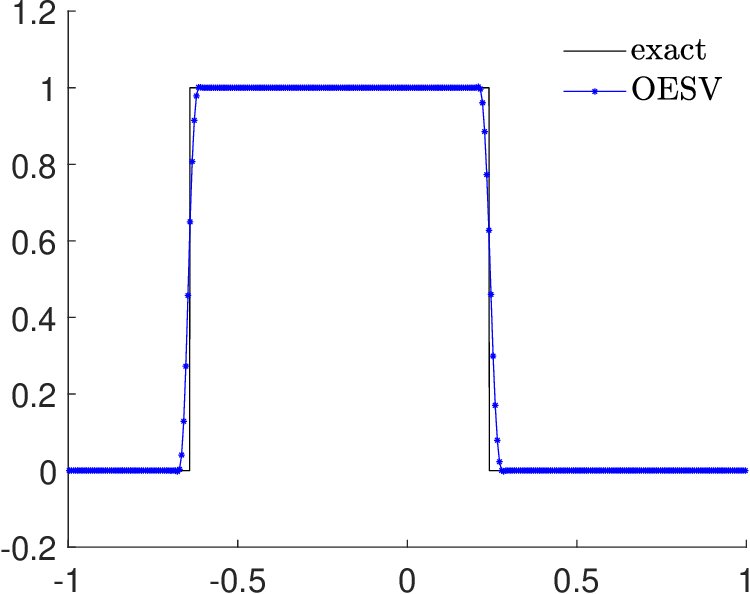} 
		
		\caption{OESV solutions (top) at $t=1.8$ and their cut along $y=-0.25$ (bottom) for \cref{ex:2Dadvec2}. From left to right: $\mathbb Q^1$, $\mathbb Q^2$, and $\mathbb Q^3$ approximations. 
		}
		\label{fig:2Dadvec2}
	\end{figure}

\end{exmp}

\subsection{1D compressible Euler equations}
This subsection presents several examples of the 1D compressible Euler equations ${\bf u}_t + {\bf f}({\bf u})_x = {\bf 0}$ with ${\bf u}=(\rho,\rho v, E)^\top$ and ${\bf f}({\bf u}) = (\rho v, \rho v^2 + p, (E+p)v)^\top$, 
where $\rho$ is the density, $v$ is the velocity, $p$ is the pressure, and $E=\frac{p}{\gamma - 1} + \frac{1}{2}\rho v^{2}$ represents the total energy. The adiabatic index is taken as $\gamma=1.4$ unless otherwise stated.

\begin{exmp}[Riemann problems]\label{ex:1Deuler2}
    This example considers two classical Riemann problems for the 1D Euler equations. 
    The first is the Sod problem with the initial conditions $(\rho_0,v_0,p_0)=(1,~0,~1)$ for $x<0$ and $(0.125,~0,~0.1)$ for $x>0$. 
    The second is the Lax problem with  $(\rho_0,v_0,p_0)= (0.445,~0.698,~3.528)$ for $x<0$ and $(0.5,~0,~0.571)$ for $x>0$. 
    For both cases, we take the domain $\Omega = [-5,5]$ with outflow boundary conditions and conduct the simulation up to $t=1.3$. \cref{fig:1DRMP} presents the numerical solutions to the two Riemann problems computed by the $\mathbb{P}^2$-based OESV scheme with $256$ uniform cells. We observe that the OESV scheme captures the shock and the contact discontinuity effectively and suppresses spurious oscillations. 

    \begin{figure}[!htb]
	\centering
	\begin{subfigure}[h]{.48\linewidth}
		\centering
		\includegraphics[width=.99\textwidth]{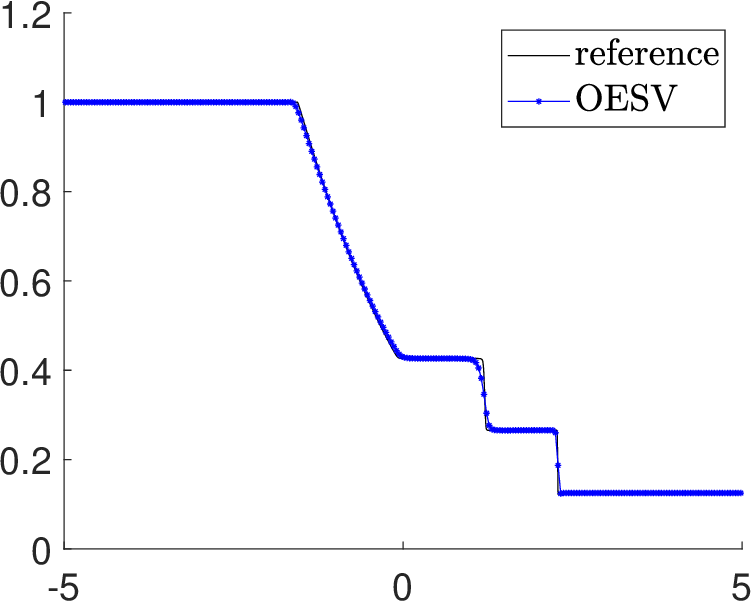}
		\subcaption{Sod's problem}	
	\end{subfigure}	
	\hfill 
	\begin{subfigure}[h]{.48\linewidth}
		\centering
		\includegraphics[width=.99\textwidth]{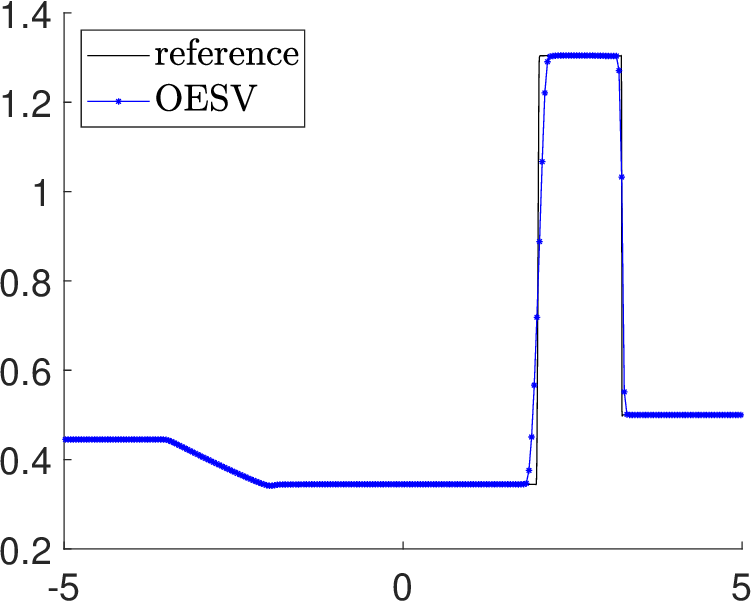}
		\subcaption{Lax's problem}	
	\end{subfigure}
	\caption{Densities of two Riemann problems at $t=1.3$ computed by OESV scheme.}
	\label{fig:1DRMP} 
\end{figure}
    
\end{exmp}

\begin{exmp}[Blast problems]\label{ex:1Deuler3}
    This example simulates two blast problems. The first is 
    the interaction of two blast waves proposed by Woodward and Colella. The spatial domain is $\Omega = [0,1]$ with reflective boundary conditions. The initial solution $(\rho_0,v_0,p_0)$ is defined as $(1,0, 10^3)$ for $0<x<0.1$,  as $(1,0, 10^{-2})$ for $0.1<x<0.9$, and as $(1,0, 10^2)$ for $0.9<x<1$. 
    \cref{fig:tb} shows the numerical results at $t=0.038$ computed by the $\mathbb{P}^2$-based OESV scheme on a uniform mesh of $640$ cells. The reference solution is obtained using the $\mathbb{P}^2$-based OEDG scheme \cite{Peng2023OEDGOD} with $10000$ uniform cells. As shown in \cref{fig:tb}, no spurious oscillations are observed near the discontinuities.

In the second test case, we investigate the Sedov blast problem on the domain \(\Omega = [-2, 2]\). This problem models the expanding wave caused by an intense explosion in a perfect gas, involving shocks and extremely low pressure. The initial conditions are \((\rho_0, v_0, E_0) = (1, 0, 10^{-12})\) for all cells except the center one, which is initialized with \((\rho_0, v_0, E_0) = (1, 0, 3200000/h)\), where \(h\) denotes the uniform mesh size. 
We simulate this problem up to \(t = 0.001\) using the \(\mathbb{P}^2\)-based OESV scheme on a uniform mesh with 129 cells. It is worth mentioning that we do not apply any positivity-preserving limiter. The numerical results are presented in \cref{fig:sedov}. The OESV scheme provides a satisfactory simulation without spurious oscillations, indicating its good robustness.

\end{exmp}

\begin{figure}[!htb]
	\centering
	\begin{subfigure}[h]{.48\linewidth}
		\centering
		\includegraphics[width=.99\textwidth]{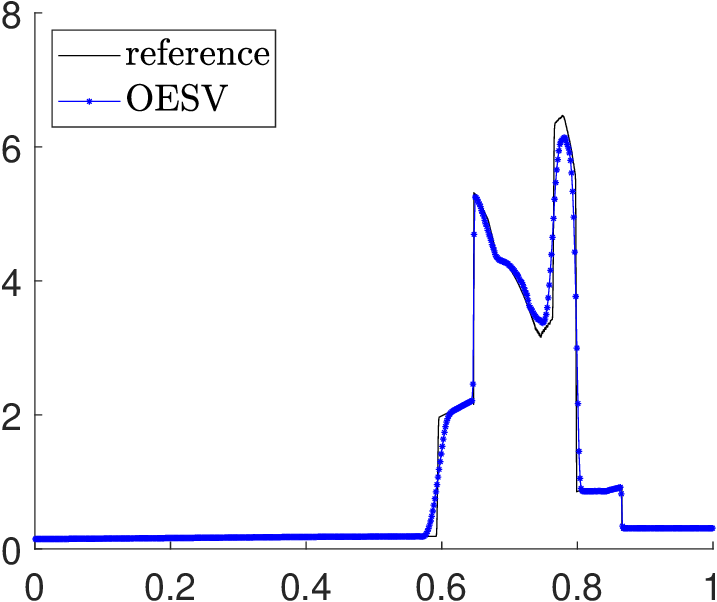}
		\subcaption{Woodward–Colella blast wave,~ $t=0.038$}
            \label{fig:tb}
	\end{subfigure}	
	\hfill 
	\begin{subfigure}[h]{.48\linewidth}
		\centering
		\includegraphics[width=.99\textwidth]{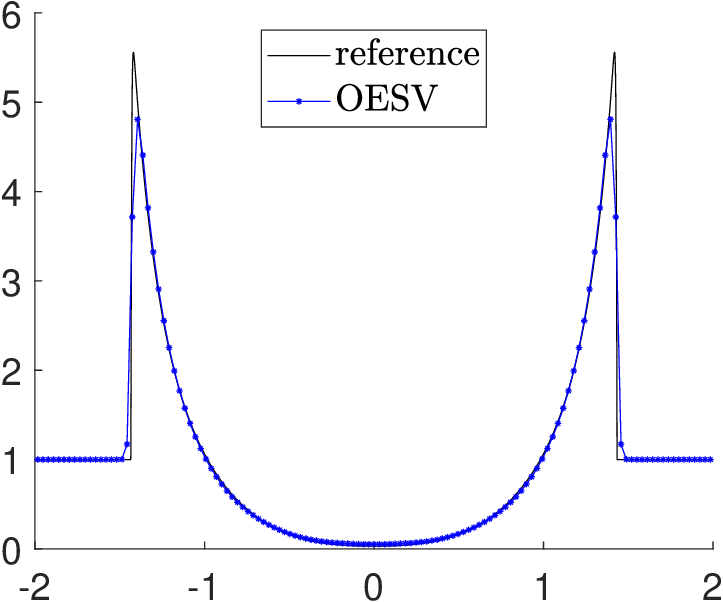}
		\subcaption{Sedov blast,~ $t=0.001$}
            \label{fig:sedov}
	\end{subfigure}
	\caption{Densities of \cref{ex:1Deuler3} computed by the OESV scheme.}
\end{figure}

\subsection{2D compressible Euler equations}
This subsection presents several benchmark test cases of the 2D Euler equations, which can be written in the form of \eqref{eq:HCL} with $ {\bf u} = (\rho, \rho {\bf v}, E)$ and $\quad {\bf f} ({\bf u}) = ( \rho {\bf v},  \rho {\bf v} \otimes {\bf v} + p {\bf I}, (E+p) {\bf v})$. 
Here, $\rho$ is the density, ${\bf v}$ represents the velocity field, $p$ is the pressure, and $E = \frac{p}{\gamma -1} + \frac{1}{2} \rho |{\bf v}|^2$ denotes the total energy. Unless otherwise specified, we set the adiabatic index $\gamma = 1.4$.

\begin{exmp}[double Mach reflection]\label{ex:doublmach}
    The double Mach reflection problem is a classical test case for assessing the capabilities of numerical schemes in handling strong shocks and their interactions. This problem describes a Mach 10 that initially forms an angle of $60^{\circ}$ relative to the bottom boundary of the spatial domain $\Omega = [0,4] \times [0,1]$. The initial conditions are defined as:
    $$(\rho_0,u_0,v_0,p_0) = \begin{cases}
	 	(8,8.25\cos(\frac{\pi}{6}),-8.25\sin(\frac{\pi}{6}),116.5),\quad &x<\frac{1}{6}+\frac{y}{\sqrt{3}},\\
	 	(1.4,0,0,1),\quad &x>\frac{1}{6}+\frac{y}{\sqrt{3}}.
	 \end{cases}$$ 
    The inflow boundary conditions are applied on the left boundary, and the outflow boundary conditions are applied on the right boundary. For the upper boundary, the postshock condition is imposed in the segment from $x=0$ to $x=\frac16+\frac1{\sqrt{3}}(1+20t)$, while the preshock condition is used for the remaining part. For the lower boundary, the postshock condition holds from $x=0$ to $x=1/6$, and the reflective boundary condition is applied to the rest. 
    The numerical solution is computed by the 2D $\mathbb{Q}^2$-based OESV scheme on a uniform rectangular mesh with $h_x=h_y=1/480$. \cref{fig:doublemach} displays the density contours of the numerical solution at $t=0.2$. The result shows that the proposed OESV scheme resolves the flow structure clearly and eliminates nonphysical oscillations effectively.

\begin{figure}[!htb]
	\centering
	\begin{subfigure}[h]{.64\linewidth}
		\centering
		\includegraphics[width=.99\textwidth]{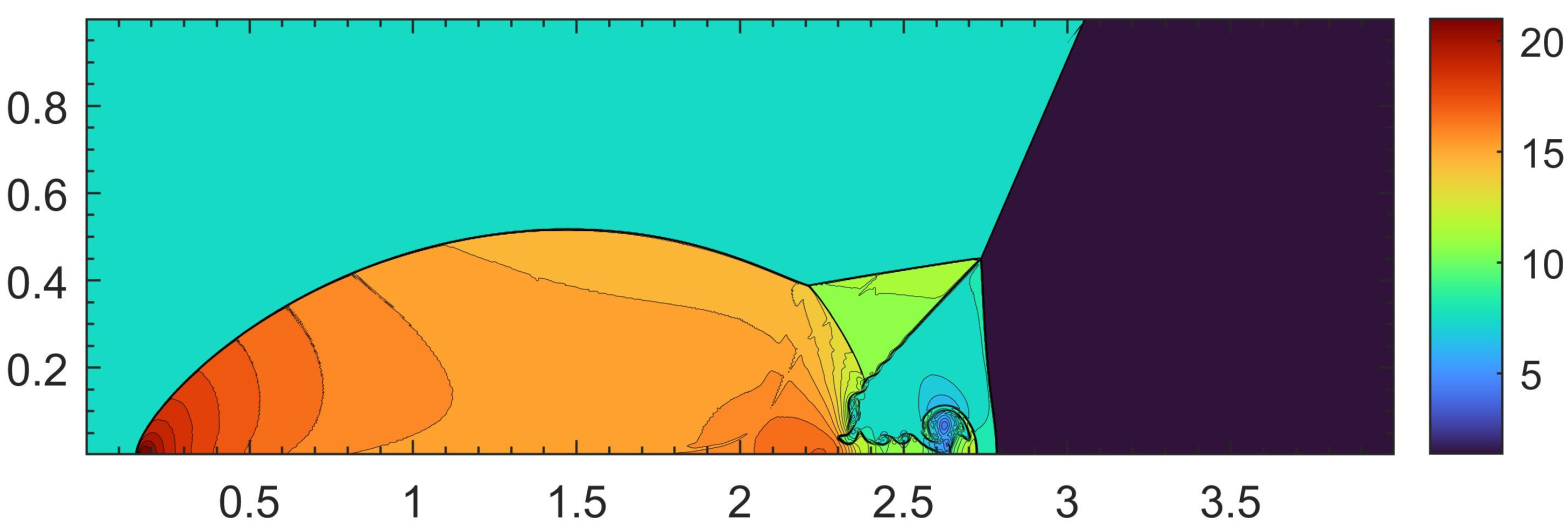}
	\end{subfigure}	
	\hfill 
	\begin{subfigure}[h]{.35\linewidth}
		\centering
		\includegraphics[width=.99\textwidth]{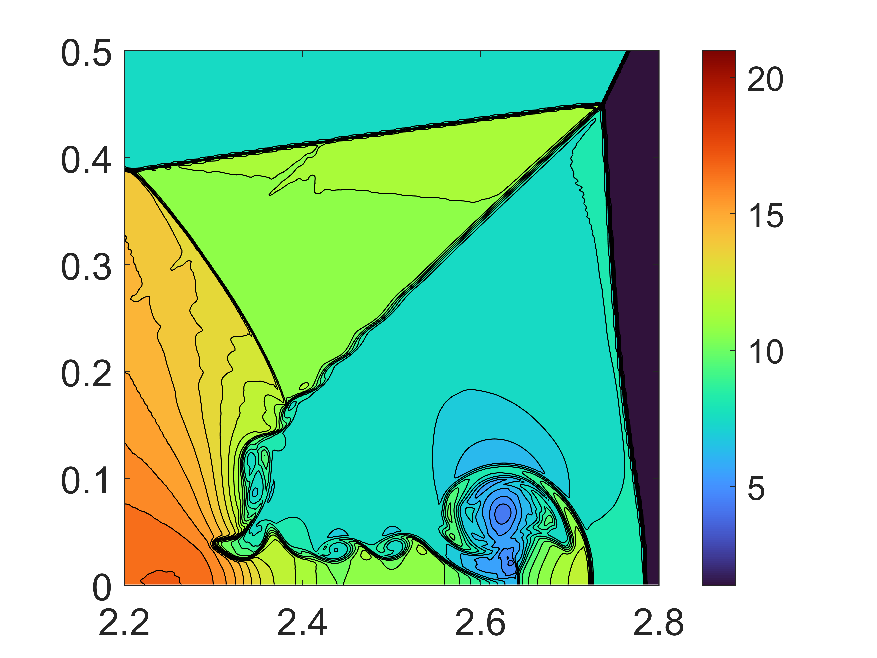}
	\end{subfigure}
	\caption{The contour plot of density (left) and its close-up (right) at $t=0.2$ obtained by the $\mathbb{Q}^2$-based OESV scheme with $h_x=h_y=1/480$. }
         \label{fig:doublemach}
  \end{figure}

\end{exmp}

\begin{exmp}[Mach 2000 jet]\label{ex:jet2000}
    This example simulates a challenging jet problem in the spatial domain $\Omega = [0,1]\times[-0.25,0.25]$ to demonstrate the effectiveness of the proposed OESV scheme. The ratio of heat capacity is set to $\gamma = \frac{5}{3}$. Initially, $\Omega$ is full of a stationary fluid characterized by $(\rho,{\bf v},p) = (0.5,0,0,0.4127)$. A Mach-$2000$ jet with the state $(\rho,{\bf v},p) = (5,800,0,0.4127)$ is injected into $\Omega$ from the left boundary within the range $y=-0.05$ to $0.05$. The remaining boundaries are imposed with the outflow boundary conditions. 
    We simulate the jet using the $\mathbb{Q}^2$-based OESV scheme with $320 \times 160$ uniform cells. 
    The numerical results at $t = 0.001$ are displayed in \cref{fig:jet2000}. We observe that the proposed OESV scheme successfully captures the intricate structures of the jet flow, including the bow shock and the shear layer, and does not procedure any obvious spurious oscillations.

    \begin{figure}[!htb]
		\centering
		\begin{subfigure}[h]{0.49\linewidth}
		\centering
		\includegraphics[width=1\textwidth]{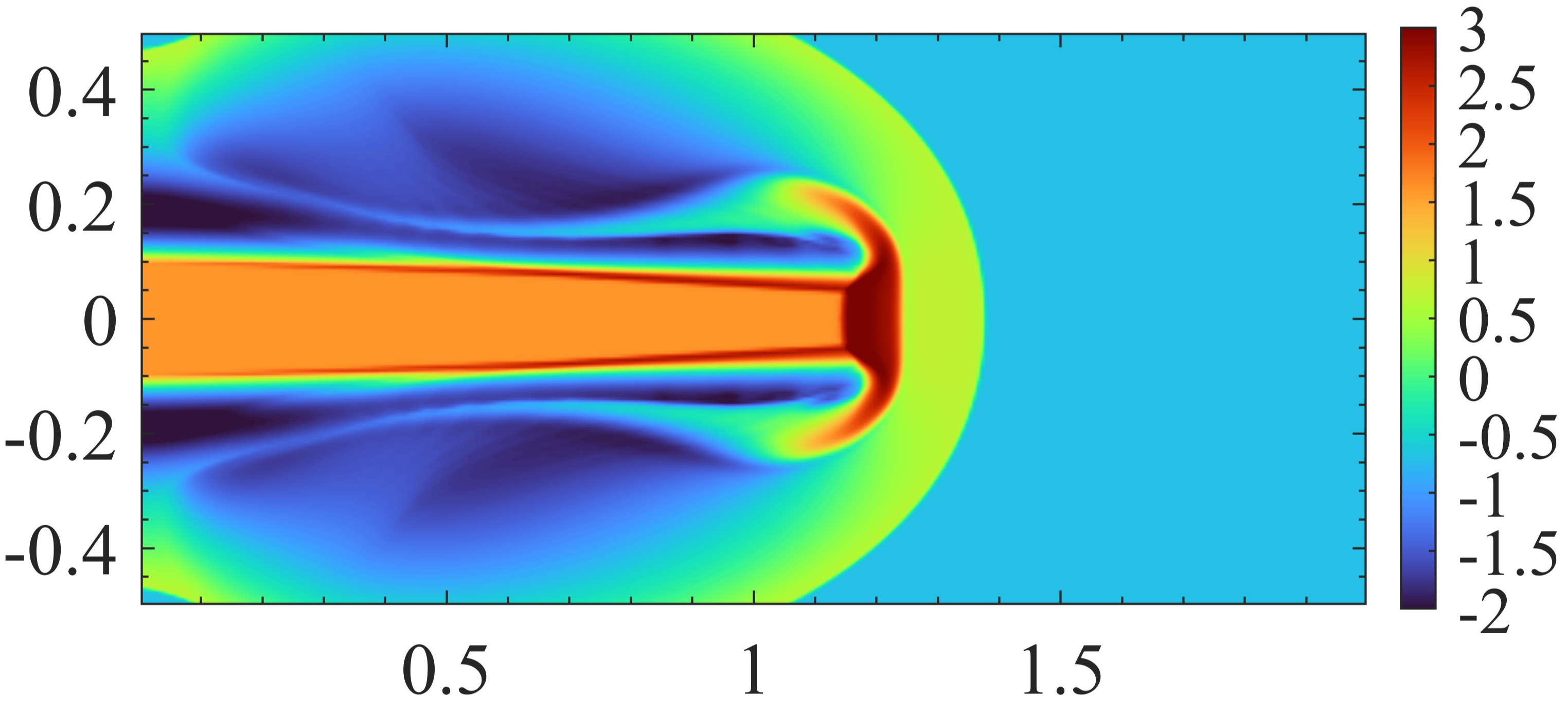}
		\subcaption{\small Density logarithm}
		\vspace{3mm}
		\end{subfigure}
	\hfill 
		\begin{subfigure}[h]{0.49\linewidth}
			\centering
			\includegraphics[width=1\textwidth]{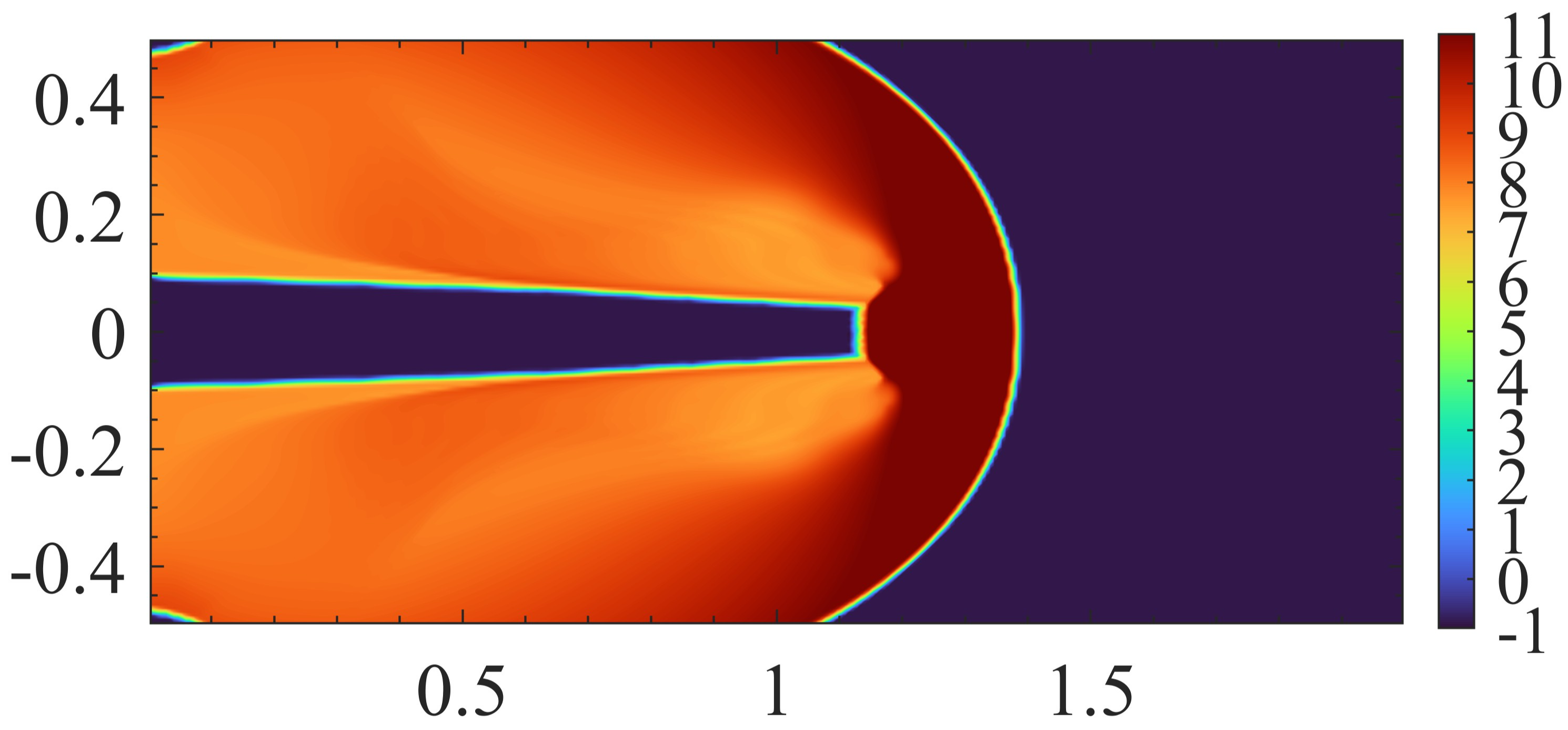}
			\subcaption{\small Pressure logarithm}
			\vspace{3mm}
		\end{subfigure}
	
			\begin{subfigure}[h]{0.49\linewidth}
		\centering
		\includegraphics[width=1\textwidth]{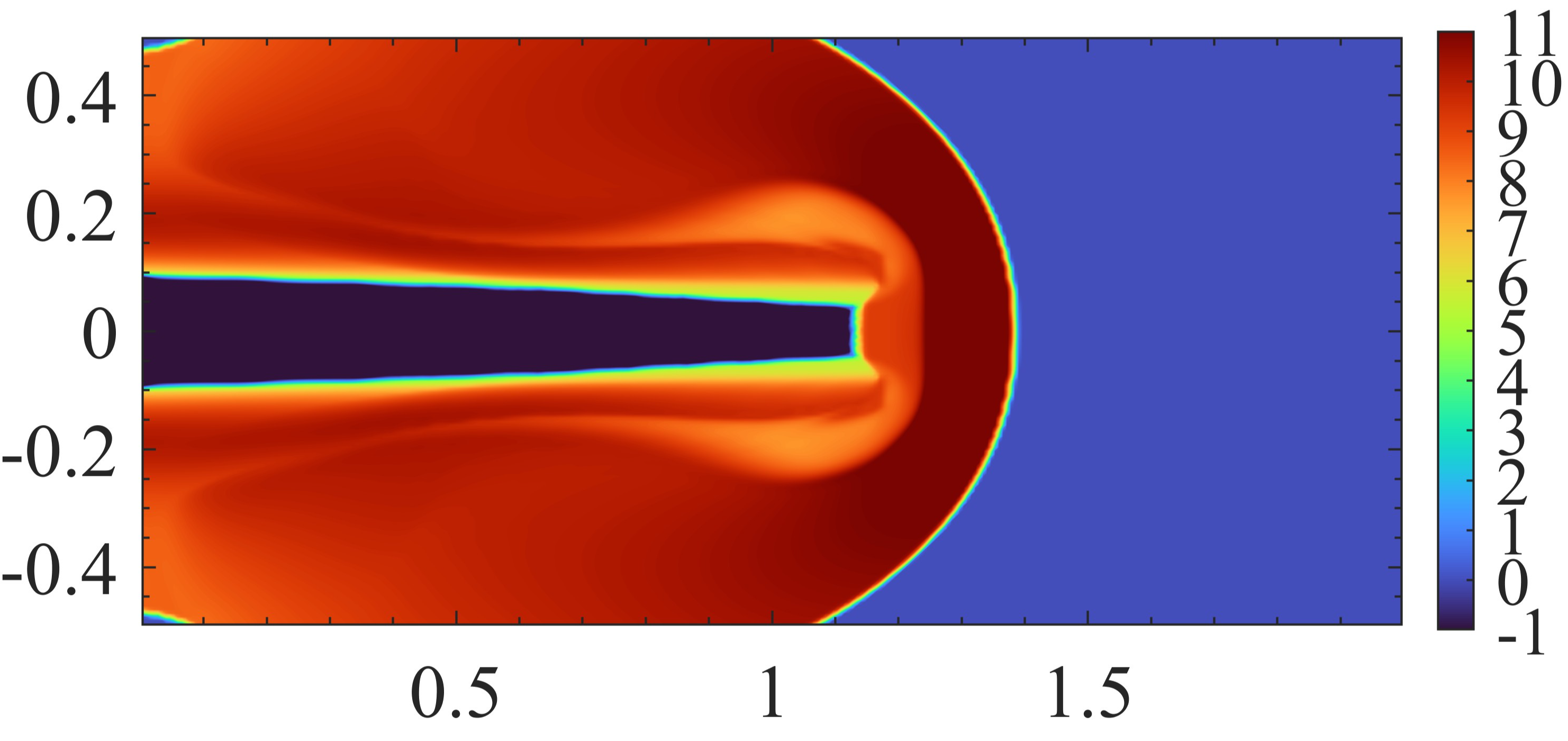}
		\subcaption{\small Temperature logarithm}
	\end{subfigure}
\hfill 
 			\begin{subfigure}[h]{0.49\linewidth}
 	\centering
 	\includegraphics[width=1\textwidth]{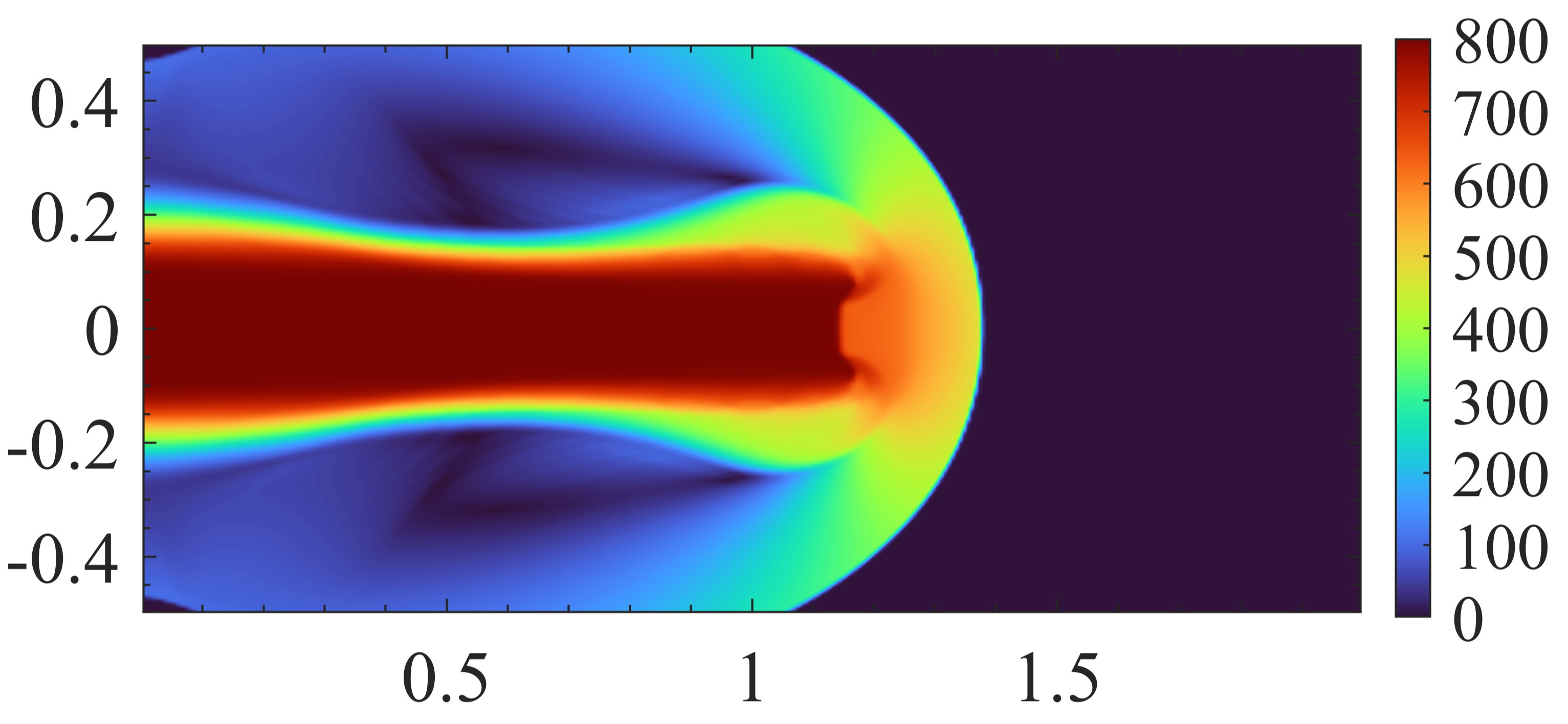}
 	\subcaption{\small Velocity magnitude}
 \end{subfigure}
		\caption{Numerical results at $t=0.001$ for the Mach 2000 jet problem.}
		\label{fig:jet2000}
	\end{figure}
\end{exmp}

Additional numerical examples can be found in \Cref{app:numex}.

\section{Conclusions}

In this paper, we have established a novel connection between the spectral volume (SV) method and the discontinuous Galerkin (DG) method for solving hyperbolic conservation laws, inspired by the Galerkin form of the SV method \cite{Cao2021AnalysisOS}. By demonstrating that the SV method can be represented in a DG form with a distinct inner product under specific subdivision assumptions, we have provided a unifying perspective for these two widely-used numerical methodologies. 
This insight allowed us to successfully extend the oscillation-eliminating (OE) technique, proposed by Peng, Sun, and Wu \cite{Peng2023OEDGOD}, to develop a new fully-discrete oscillation-eliminating SV (OESV) method. The OE procedure, being non-intrusive, efficient, and straightforward to implement, serves as a robust post-processing filter that effectively suppresses spurious oscillations. 
Our comprehensive framework, grounded in a DG perspective, facilitates rigorous theoretical analysis of the stability and accuracy of both general Runge--Kutta SV (RKSV) schemes and the novel OESV method. Specifically, for the linear advection equation, we identified a crucial upwind condition for stability and established optimal error estimates for the OESV schemes. The challenges arising from the nonlinearity of the OESV method were addressed through error decomposition and by treating the OE procedure as additional source terms in the RKSV schemes. 
Extensive numerical experiments have validated our theoretical analysis, demonstrating the effectiveness and robustness of the proposed OESV method across a range of benchmark problems. 
In conclusion, this work not only enhances the theoretical understanding of SV schemes but also significantly improves their practical application. It opens opportunities for future research to explore further extensions and applications of the OESV method for high-resolution simulations of various complex hyperbolic systems.

\section{Appendix}
This appendix provides some additional numerical examples and the proofs of several 
auxiliary propositions.

\subsection{Additional numerical examples}\label{app:numex}

\begin{exmp}[smooth problem]\label{ex:1Deuler1}
	This example examines a smooth problem of the 1D compressible Euler equations on the spatial domain $\Omega = [0,2\pi]$ with the periodic boundary conditions. The exact solution is given by $\rho(x,t) = 2 + 2\sin^2(x-t)$, 
	$v(x,t) = 1$, and $p(x,t) = 2$. 
	The problem is simulated up to $t=1.1$ using the $\mathbb{P}^k$-based OESV scheme with a CFL number $C_{\mathrm{CFL}}=\frac{0.95}{2k+1}$.
	The errors and corresponding convergence rates are listed in \cref{table:test3}. It is observed that the $\mathbb{P}^k$-based OESV scheme achieves the optimal convergence rate of $(k+1)$th-order for this  problem. 
	
	\begin{table}[!htb]
		\centering
		\caption{Errors and convergence rates for $\mathbb P^k$-based OESV scheme for 1D Euler equations.}\label{table:test3} 
		\begin{center}
			
			\begin{tabular}{c|c|c|c|c|c|c|c} 
				\bottomrule[1.0pt]
				$k$	&	$N_x$ &  $L^1$ error & rate & $L^2$ error & rate & $L^\infty$ error & rate   \\
				\hline
				\multirow{7}*{$1$}
				
				&256&        1.62e-03&	       -&	7.70e-04&	       -&	6.24e-04&	       -\\ 
				&512&        2.92e-04&	2.47&	1.38e-04&	2.48&	1.14e-04&	2.46\\ 
				&1024&      6.65e-05&	2.13&	2.99e-05&	2.21&	2.35e-05&	2.27\\ 
				&2048&      1.60e-05&	2.05&	7.15e-06&	2.07&	5.44e-06&	2.11\\ 
				&4096&      3.95e-06&	2.02&	1.77e-06&	2.02&	1.34e-06&	2.02\\ 
				&8192&      9.83e-07&	2.01&	4.40e-07&	2.00&	3.34e-07&	2.00\\ 
				&16384&    2.45e-07&  	2.00&	1.10e-07&	2.00&	8.36e-08&	2.00\\ 
				&32768&    6.12e-08&	2.00&	2.75e-08&	2.00&	2.09e-08&	2.00\\

				\hline
				\multirow{4}*{$2$}
				
				&256&      8.44e-06&	  -&	  4.28e-06&	-&	4.24e-06&	-\\ 
				&512&      7.68e-07&	  3.46&	  4.03e-07&	3.41&	4.17e-07&	3.35\\ 
				&1024&    9.00e-08&	  3.09&	  4.55e-08&	3.15&	4.74e-08&	3.14\\ 
				&2048&    1.10e-08&	  3.03&	  5.49e-09&	3.05&	5.71e-09&	3.05\\ 
				&4096&    1.37e-09&	  3.01&	  6.77e-10&	3.02&	7.04e-10&	3.02\\ 
				&8192&    1.70e-10&	  3.01&	  8.41e-11&	3.01&	8.81e-11&	3.00\\

				\hline
				
				\multirow{4}*{$3$}
				
				&256&    1.77e-08&	-&	8.17e-09&	-&	7.20e-09&	-\\ 
				&512&    6.32e-10&	4.81&	2.99e-10&	4.77&	3.00e-10&	4.59\\ 
				&1024&  2.89e-11&	4.45&	1.36e-11&	4.46&	1.41e-11&	4.41\\ 
				&2048&  1.53e-12&	4.24&	7.50e-13&	4.18&	8.70e-13&	4.02\\ 
				\toprule[1.0pt]
			\end{tabular}
		\end{center}
	\end{table}
\end{exmp}

\begin{exmp}[smooth problem]\label{ex:2Dadvec1}
	This example serves as an accuracy test for the 2D $\mathbb{Q}^k$-based OESV scheme for the linear advection equation:
	\begin{equation}\label{eq:2Dadvec}
		u_t+u_x+u_y = 0,\quad (x,y) \in \Omega=[0,1]\times[0,1],
	\end{equation}
	with the periodic boundary conditions. 
	The simulation is initialized with $u_0(x,y) =\sin^2\left( \pi(x+y) \right) $. We list the numerical errors on a mesh of $N_x \times N_y$ cells at $t=1.1$ and the corresponding convergence rates in \cref{table:2Dadvec1}. 
	On coarse meshes, it is observed that the high-order damping effect dominates the numerical errors, leading to convergence rates exceeding $k + 1$. This effect was also observed in the OEDG method \cite{Peng2023OEDGOD}.

	\begin{table}[!htb]
		\centering
		\begin{center}
			\caption{Errors and convergence rates of 2D $\mathbb Q^k$-based OESV scheme for \Cref{ex:2Dadvec1}.}\label{table:2Dadvec1}
			\begin{tabular}{c|c|c|c|c|c|c|c} 
				\bottomrule[1.0pt]
				$k$	&	$N_x \times N_y$ &$L^1$ error& rate & $L^2$ error & rate & $L^\infty$ error &  rate   \\ \hline \multirow{6}*{$1$}
				
				&$80\times64$&2.18e-02&-&2.58e-02&-&4.32e-02&-\\
				&$160\times128$&3.68e-03&2.57&4.32e-03&2.58&7.07e-03&2.61\\
				&$320\times256$&6.19e-04&2.57&7.23e-04&2.58&1.23e-03&2.52\\
				&$640\times512$&1.16e-04&2.41&1.31e-04&2.47&2.37e-04&2.38\\
				&$1280\times1024$&2.54e-05&2.20&2.78e-05&2.24&4.80e-05&2.30\\
				&$2560\times2048$&5.97e-06&2.09&6.54e-06&2.09&1.05e-05&2.20\\

				\hline \multirow{6}*{$2$}
				
				&$80\times64$&6.49e-04&-&7.43e-04&-&1.16e-03&-\\
				&$160\times128$&2.31e-05&4.82&2.57e-05&4.85&3.74e-05&4.95\\
				&$320\times256$&1.23e-06&4.23&1.35e-06&4.25&1.92e-06&4.28\\
				&$640\times512$&7.81e-08&3.98&8.46e-08&3.99&1.18e-07&4.03\\
				&$1280\times1024$&5.66e-09&3.79&6.07e-09&3.80&8.20e-09&3.85\\
				&$2560\times2048$&4.68e-10&3.60&5.00e-10&3.60&6.51e-10&3.65\\
				
				\hline \multirow{6}*{$3$}		
				
				&$80\times64$&4.82e-06&-&5.64e-06&-&9.79e-06&-\\
				&$160\times128$&1.65e-07&4.87&1.86e-07&4.92&3.16e-07&4.95\\
				&$320\times256$&5.26e-09&4.97&5.89e-09&4.98&9.48e-09&5.06\\
				&$640\times512$&1.67e-10&4.98&1.86e-10&4.99&2.89e-10&5.04\\
				&$1280\times1024$&5.24e-12&4.99&5.83e-12&4.99&8.48e-12&5.09\\		
				
				\toprule[1.0pt]
			\end{tabular}
		\end{center}
	\end{table}

\end{exmp}

\begin{exmp}[2D smooth problem]\label{ex:2Deuler1}
	The exact solution of this example is smooth, describing a sine wave periodically propagating in the spatial domain $\Omega = [0,2]\times[0,2]$, and given by  $(\rho,{\bf v},p) = (1+0.2\sin(\pi(x+y-t)),0.7,0.3,1)$. 
	We compute the numerical solution using the 2D $\mathbb{Q}^k$-based OESV scheme. The numerical errors and the corresponding convergence rates for the density at $t=2$ are presented in \cref{table:2Deuler1}. We observe the expected convergence rate of $k+1$ for the $\mathbb{Q}^k$-based OESV scheme.
	
	\begin{table}[!htb] 
		\caption{Errors and convergence rates for 2D $\mathbb Q^k$-based OEDG method for \Cref{ex:2Deuler1}.}\label{table:2Deuler1}
		
		\begin{center}
			\centering
			\begin{tabular}{c|c|c|c|c|c|c|c} 
				\bottomrule[1.0pt]
				&	$N_x\times N_y$ &$L^1$ error& rate & $L^2$ error & rate & $L^\infty$ error & rate   \\ \hline \multirow{5}*{$1$}
				
				&80$\times$80&2.14e-04&-&2.38e-04&-&3.38e-04&-\\ 
				&160$\times$160&5.36e-05&2.00&5.95e-05&2.00&8.42e-05&2.00\\ 
				&320$\times$320&1.34e-05&2.00&1.49e-05&2.00&2.10e-05&2.00\\ 
				&640$\times$640&3.35e-06&2.00&3.72e-06&2.00&5.26e-06&2.00\\
				&1280$\times$1280&8.37e-07&2.00&9.29e-07&2.00&1.31e-06&2.00\\ 
				
				\hline \multirow{5}*{$2$}
				
				&80$\times$80&5.71e-08&-&6.90e-08&-&1.55e-07&-\\ 
				&160$\times$160&6.06e-09&3.24&8.33e-09&3.05&2.05e-08&2.92\\ 
				&320$\times$320&2.24e-10&4.76&2.65e-10&4.97&5.59e-10&5.20\\ 
				&640$\times$640&2.04e-11&3.46&2.58e-11&3.36&6.01e-11&3.22\\ 
				&1280$\times$1280&2.60e-12&2.97&3.31e-12&2.96&8.47e-12&2.83\\

				\hline \multirow{5}*{$3$}
				
				&80$\times$80&8.93e-10&-&1.14e-09&-&2.75e-09&-\\ 
				&160$\times$160&4.66e-11&4.26&6.03e-11&4.24&1.47e-10&4.23\\ 
				&320$\times$320&1.77e-12&4.72&2.29e-12&4.72&6.59e-12&4.48\\ 
				&640$\times$640&9.08e-14&4.28&1.07e-13&4.42&2.22e-13&4.89\\

				\toprule[1.0pt]
			\end{tabular}
		\end{center}
	\end{table}
\end{exmp}

\begin{exmp}[shock reflection problem]\label{ex:shockref}
	This example simulates the shock reflection problem \cite{ZHU201780} for the 2D compressible Euler equations in the spatial domain $\Omega = [0,4]\times[0,1]$. The initial conditions for this problem are defined as 
	$
	(\rho_0,{\bf v}_0,p_0) = (1,2.9,0,\frac{5}{7}),
	$, which are also applied as the inflow boundary conditions on the left boundary of $\Omega$. On the upper boundary of $\Omega$, a different set of inflow boundary conditions is applied:
	\begin{equation*}
		(\rho,{\bf v},p)=(1.69997, 2.61934, -0.50632, 1.52819).
	\end{equation*}
	Reflective wall boundary conditions are applied on the lower boundary of $\Omega$, while the right boundary is subjected to outflow boundary conditions.
	
	The numerical solution for this example is obtained using the 2D $\mathbb{Q}^2$-based OESV scheme on a uniform rectangular mesh of $200\times50$ cells. Following \cite{Liu2022AnEO}, we compute the average residue to study the convergence behavior of the numerical solution. The average residue is defined as:
	\begin{equation}\label{eq:avres}
		{\rm Res} := \frac1{4N_xN_y} \sum_{i=1}^{N_x}\sum_{j=1}^{N_y}\sum_{q=1}^4\left|R^{(q)}_{ij} \right|,
	\end{equation}
	where $R^{(q)}_{ij}:= \frac{1}{\tau} ( u_h^{n+1,(q)} -u_h^{n,(q)}  ) $ is the local residue on the cell $[x_{i-\frac12},x_{i+\frac12}]\times [y_{j-\frac12},y_{j+\frac12}]$, and $u_h^{n,(q)}$ represents the $q$th component of the numerical solution at the $n$th time step. 
	
	We plot the logarithm of the average residue over time in \cref{fig:shockref1}. The plot shows that the average residue decreases to the level of machine error in double precision after about $t = 9$, indicating that the numerical solution converges to a steady state as time progresses. The density contour of the numerical solution at $t = 20$ is displayed in \cref{fig:shockref2}, where the shock waves are clearly captured without spurious oscillations.
	
	\begin{figure}[!htb]
		\centering
		\begin{subfigure}[h]{.32\linewidth}
			\centering
			\includegraphics[width=.99\textwidth]{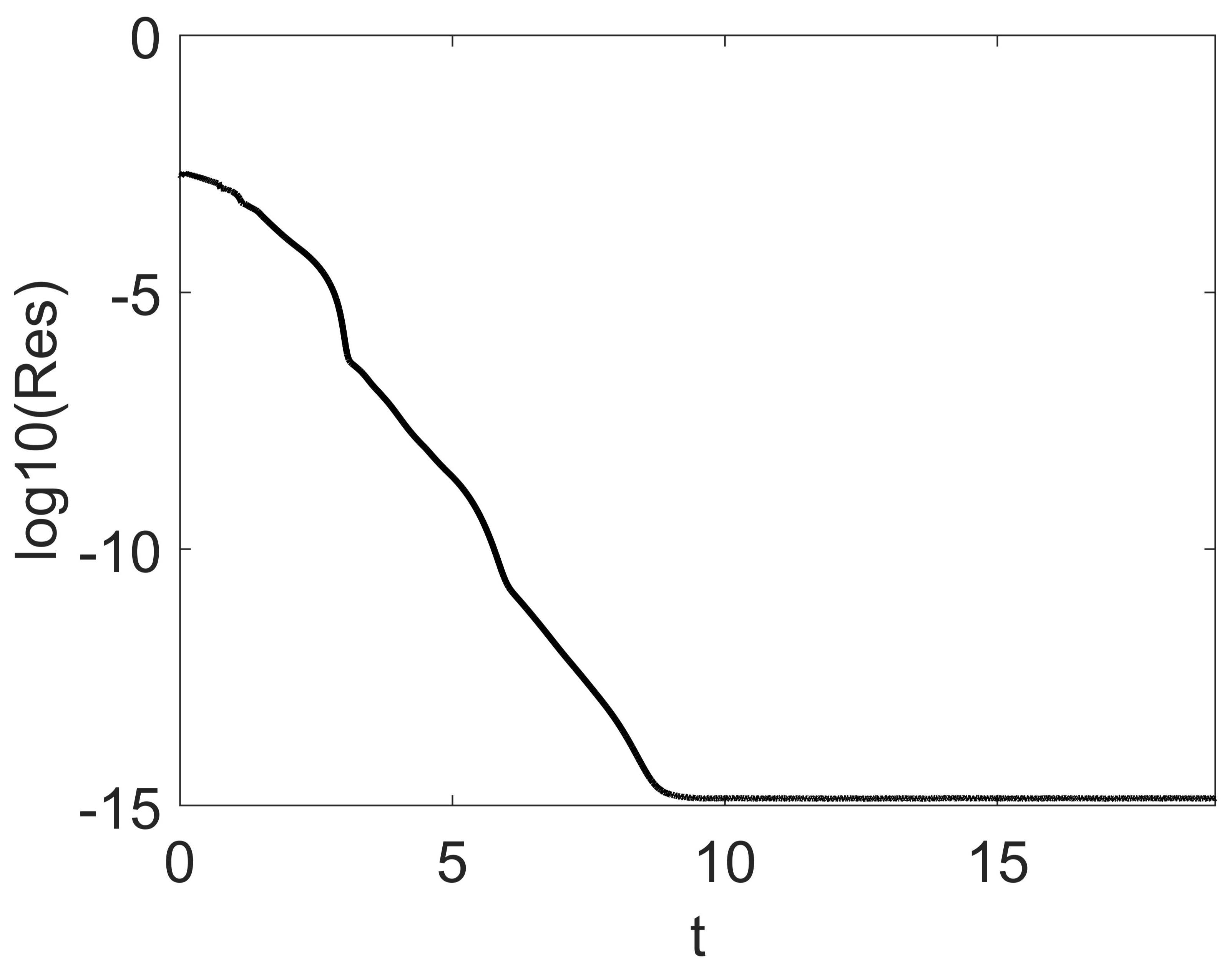}
			\subcaption{Average residue}
			\label{fig:shockref1}
		\end{subfigure}	
		\hfill 
		\begin{subfigure}[h]{.67\linewidth}
			\centering
			\includegraphics[width=.99\textwidth]{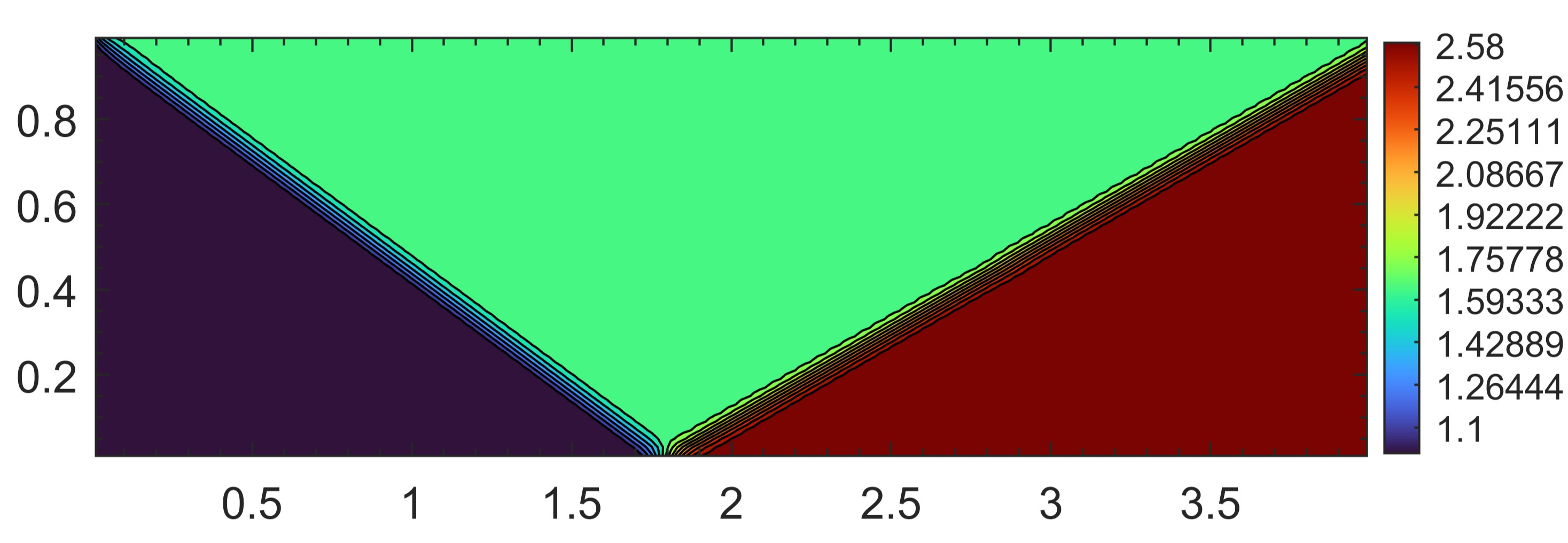}
			\subcaption{Density contour at $t=20$}
			\label{fig:shockref2}
		\end{subfigure}
		\caption{Shock reflection problem simulated by $\mathbb{Q}^2$-based OESV scheme. }
	\end{figure}

\end{exmp}

\begin{exmp}[supersonic flow past two plates]\label{ex:supersonic}
	This example \cite{Zhu2018NumericalSO} simulates a supersonic flow in the spatial domain $\Omega = [0,10] \times [-5,5]$ with the initial conditions
	$(\rho_0, {\bf v}_0,p_0) = \left(1,\cos(\frac{\pi}{12}),\sin(\frac{\pi}{12}),\frac{1}{\gamma M_{\infty}^2} \right),$ where $M_{\infty}=3$ is the Mach number of the free stream. 
	The supersonic flow pasts two plates with an attack angle of $15^{\circ}$. The two plates are placed at $y=\pm2$ with $x\in(2,3)$, and slip boundary conditions are applied on both plates. The inflow boundary conditions are applied on the left and lower boundaries of $\Omega$. The outflow boundary conditions are applied on the upper and right boundaries.
	The simulation is performed using the 2D $\mathbb{Q}^2$-based OESV scheme on a mesh of $200 \times 200$ uniform cells. Similar to \cref{ex:shockref} and \cite{Peng2023OEDGOD}, we display the average residue history and plot the density contour of the numerical solution at $t=100$ in \cref{fig:supersonic}. One can see from \cref{fig:supersonic1} that the numerical solution reaches a steady state at $t=100$. We also observe from \cref{fig:supersonic2} that the OESV scheme captures the flow structure correctly with no nonphysical oscillations.
	
	\begin{figure}[!htb]
		\centering
		\begin{subfigure}[h]{.48\linewidth}
			\centering
			\includegraphics[width=.99\textwidth,height=0.84\textwidth]{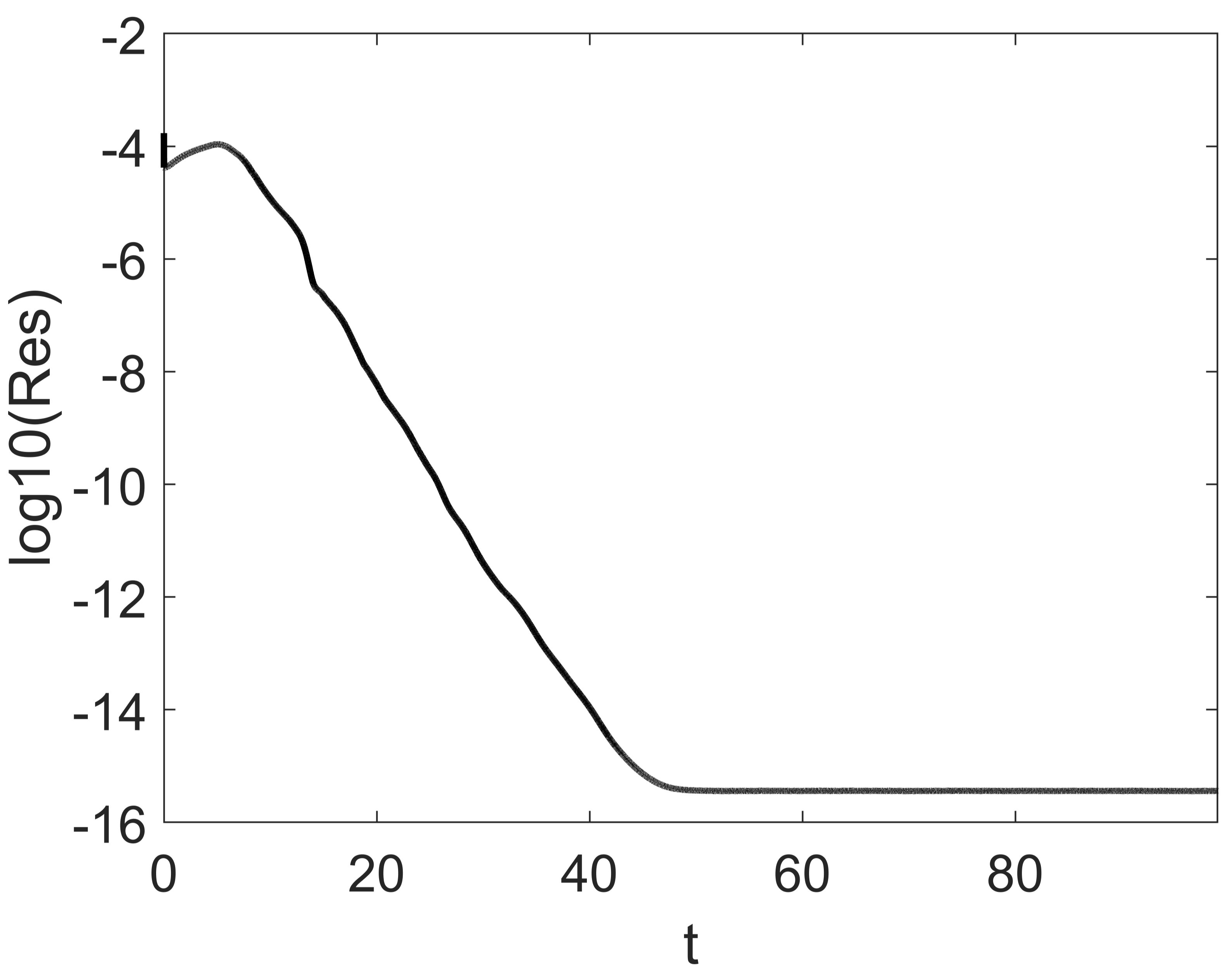}
			\subcaption{Average residue}
			\label{fig:supersonic1}
		\end{subfigure}	
		\hfill 
		\begin{subfigure}[h]{.48\linewidth}
			\centering
			\includegraphics[width=.99\textwidth]{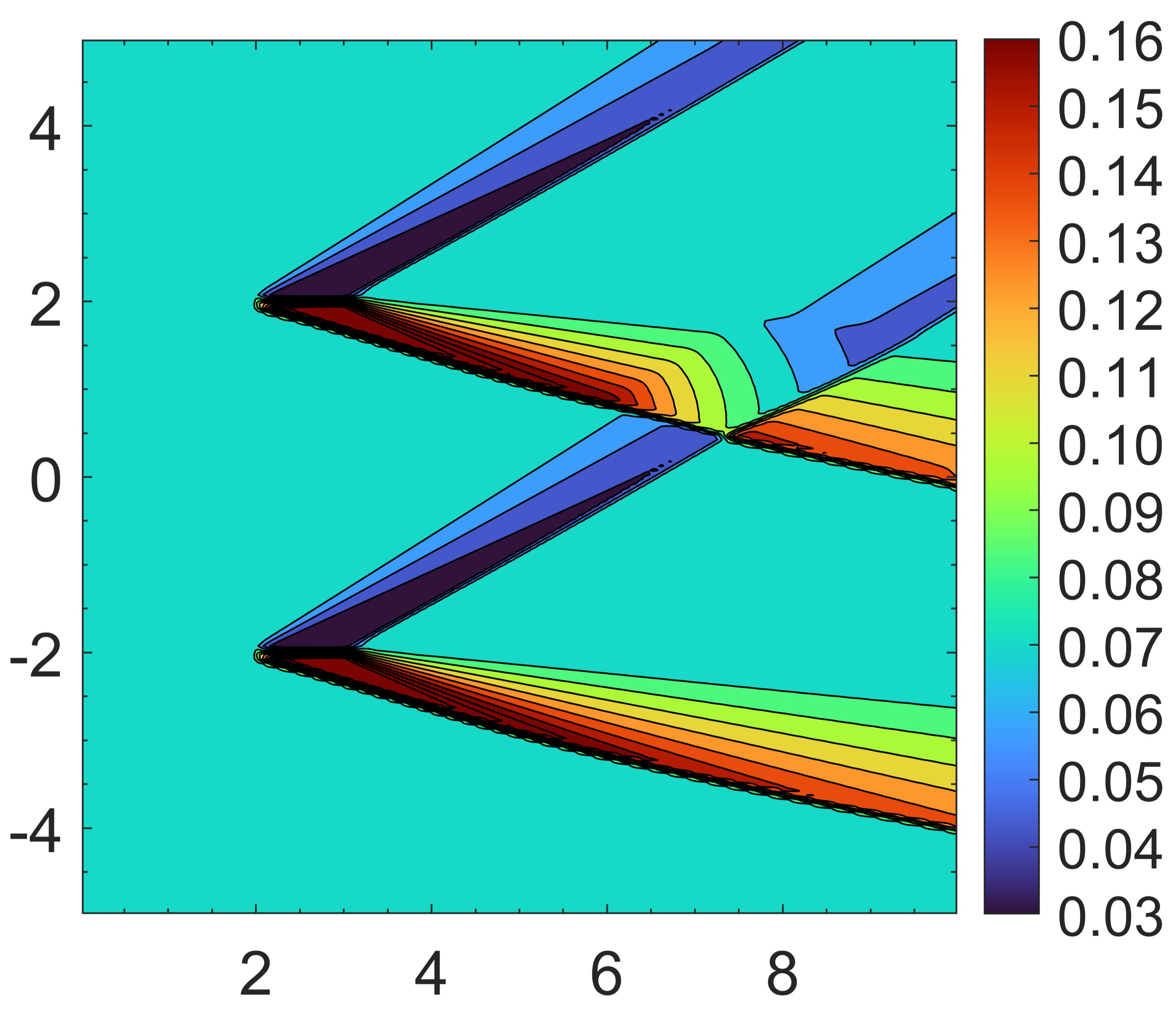}
			\subcaption{Density contour at $t=100$}
			\label{fig:supersonic2}
		\end{subfigure}
		\caption{Supersonic flow problem simulated by $\mathbb{Q}^2$-based OESV scheme. }
		\label{fig:supersonic}
	\end{figure}
	
\end{exmp}

\begin{exmp}[shock-vortex interaction]\label{ex:vortex}
	We consider the interaction between a vortex and a Mach 1.1 shock in the spatial domain $\Omega = [0,2]\times[0,1]$ by solving the 2D compressible Euler equations. The shock is perpendicular to the $x$-axis and is positioned at $x=0.5$, with the left upstream state $(\rho, {\bf v}, p) = (1,1.1\sqrt{\gamma},0,1)$. At $t=0$, an isentropic vortex centered at $(x_c,y_c) = (0.25,0.5)$ is added to the mean flow. The velocity, temperature, and entropy perturbations due to the vortex are defined by 
	\begin{align*}
		\delta {\bf v} = \frac{\varepsilon}{r_c} e^{\alpha(1-\eta^2)}(\bar{y}, -\bar{x}), \quad
		\delta T = -\frac{(\gamma-1)\varepsilon^2}{4\alpha\gamma} e^{2\alpha(1-\eta^2)}, \quad
		\delta S =0,
	\end{align*}
	where $r^2= \bar{x}^2 + \bar{y}^2$, $\eta = \frac{r}{r_c}$, and $(\bar{x}, \bar{y}) = (x-x_c, y-y_c)$. $\varepsilon = 0.3$ is the strength of the vortex, $\alpha = 0.204$ denotes the decay rate of the vortex, and $r_c=0.05$ represents the critical radius of the vortex. 
	The inflow boundary conditions are applied on the left boundary and the outflow boundary conditions are imposed on the right boundary. Meanwhile, reflective boundary conditions are applied on both the upper and lower boundaries.
	
	We divide $\Omega$ into $400\times 200$ uniform rectangular cells and conduct the simulation using the 2D $\mathbb{Q}^2$-based OESV scheme up to $t=0.8$. \cref{fig:vortex} presents the pressure contour plots of the numerical solution at six different time instances. The results obtained by the proposed OESV scheme match well with those reported in \cite{Liu2022AnEO}, without producing any nonphysical oscillations.

	\begin{figure}[!htb]
		\centering
		\begin{subfigure}[h]{.32\linewidth}
			\centering 
			\includegraphics[width=0.99\textwidth]{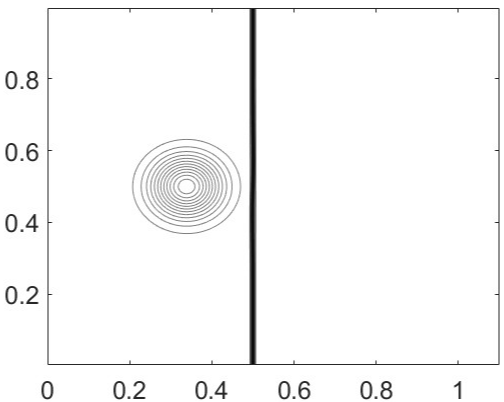}
			\subcaption{$t=0.068$}
		\end{subfigure}
		\hfill 
		\begin{subfigure}[h]{.32\linewidth}
			\centering 
			\includegraphics[width=0.99\textwidth]{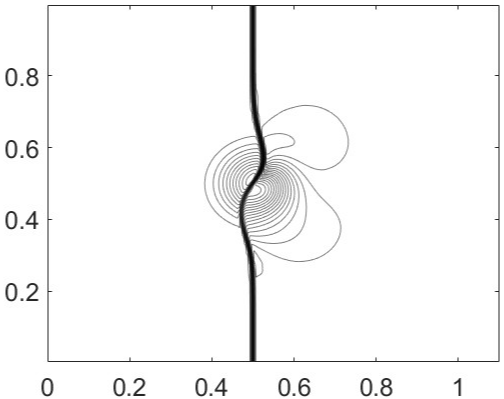}
			\subcaption{$t=0.203$}
		\end{subfigure}
		\hfill 
		\begin{subfigure}[h]{.32\linewidth}
			\centering 
			\includegraphics[width=0.99\textwidth]{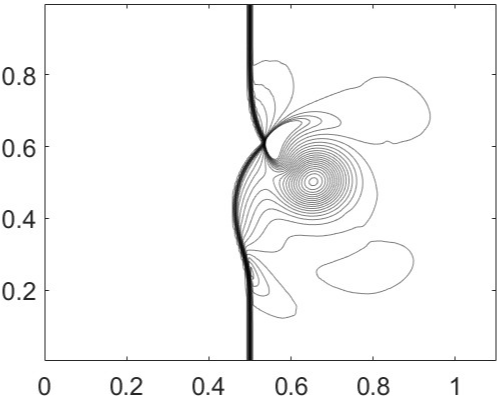}
			\subcaption{$t=0.330$}
			
		\end{subfigure}
		\vspace{3.6mm}
		
		\begin{subfigure}[h]{.32\linewidth}
			\centering 
			\includegraphics[width=0.99\textwidth]{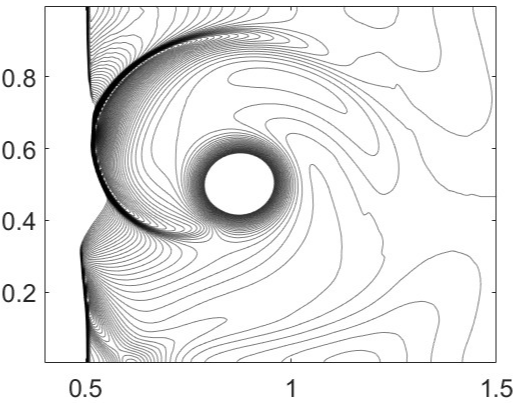}
			\subcaption{$t=0.529$}
		\end{subfigure}
		\hfill 
		\begin{subfigure}[h]{.32\linewidth}
			\centering 
			\includegraphics[width=0.99\textwidth]{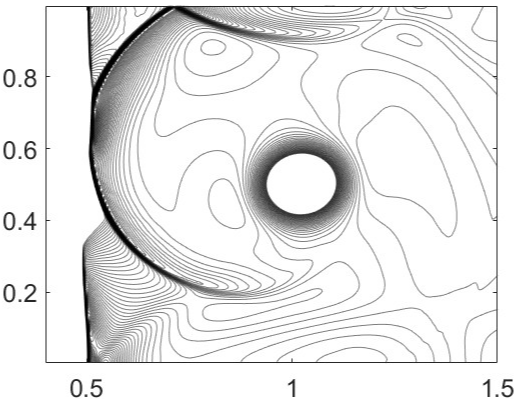}
			\subcaption{$t=0.662$}
		\end{subfigure}
		\hfill 
		\begin{subfigure}[h]{.32\linewidth}
			\centering 
			\includegraphics[width=0.99\textwidth]{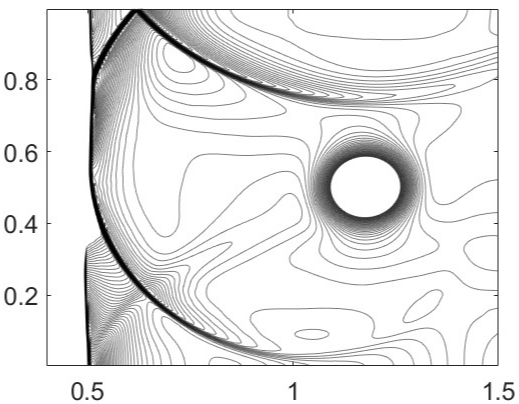}
			\subcaption{$t=0.8$}
		\end{subfigure}
		
		\caption{Contour plots of pressure for shock-vortex interaction:  
			50 contour lines from 0.68 to 1.3 are shown in the top figures, while 90 contour lines from 1.19 to 1.36 are plotted in the bottom figures.}
		\label{fig:vortex}
	\end{figure}
\end{exmp}

\subsection{Proof of \cref{prop:Mstar-surjective}}\label{app:1}
If $k=0$, then $\mathbb{V}^{k,*} = \mathbb{V}^{k}$ and $M^{*}\omega = \omega$ for all $ \omega \in \mathbb{V}^{k}$. If $k\geq 1$, then for every $\omega^* \in \mathbb{V}^{k,*}$, we define $\omega_{i,j}^*$ as the constant value of $\omega^*$ on $I_{i,j}$. Then, we can find $ p_i \in \mathbb{P}^{k-1}(I_{i})$ for each $i$ such that
$$ p_i(x_{i,j}) = \begin{cases}
	0,~~ & \mbox{if}~ A_{i,j}=0, \\
	\frac{\omega_{i,j}^*-\omega_{i,j-1}^*}{A_{i,j}},~~ & \mbox{otherwise},
\end{cases}\quad j =1,\dots,k. $$
Note that there exists $ P_i \in \mathbb{P}^{k}(I_{i})$ such that $(P_i)_x = p_i$ and $P_i(x_{i-\frac{1}{2}}) = \omega_{i,0}^* - A_{i,0}p_i(x_{i-\frac{1}{2}})$. Hence, if we define $\omega \in \mathbb{V}^{k}$ by $\omega |_{I_i} = P_i$ for all $i$, then $\omega^* = M^{*}\omega$. The proof is completed.

\subsection{Proof of \cref{prop:Mstar}}\label{app:2}
For any $\omega \in \mathbb{V}^{k}$,  the Cauchy--Schwarz inequality and the definition of $Q_i^k$ imply
\begin{equation}\label{eq:ieq-Mstar}
	\nm{M^*\omega}^2 \leq (k+1) \sum_i \left( h_i\nm{\omega}_{L^\infty (I_i)}^2 + h_i^2Q_i^k(\omega_x^2) \right).
\end{equation}
Since $\omega_x^2 |_{I_i} \in \mathbb{P}^{2k-1}(I_i)$, we have $Q_i^k(\omega_x^2) = \nm{\omega_x}_{L^2 (I_i)}^2$. Applying inverse inequalities \eqref{eq:normequiv-1} and \eqref{eq:normequiv} to the right hand side of \eqref{eq:ieq-Mstar} completes the proof.

\section{Proof of \cref{prop:ipstar}}\label{app:3}
\cref{lemma:Mstar-difference} implies $\ip{v}{\omega}_* = \ip{v}{\omega} + \sum_i R_i^k(V\omega_x)$ for some $V \in \mathbb{V}^{k+1}$. For $\omega \in \mathbb{V}^{k-1}$, $R_i^k(V\omega_x)=0$ for all $i$, which implies \eqref{eq:propipstar_1}. To prove \eqref{eq:propipstar_2}, we take $C = 1+\sqrt{\nm{M^*}}$, and \cref{prop:Mstar} indicates that
\begin{equation}
	\nm{v}_*^2 = \ip{v}{M^* v} \leq \nm{M^*} \nm{v}^2 \leq \left( C \nm{v} \right)^2.
\end{equation}
Using \eqref{eq:normLik} and \eqref{eq:propipstar_1}, we can verify that $c\nm{v} \leq \nm{v}_*$ for any $v \in \mathbb{V}^k$, where
\begin{equation}
	c = \min \left\{ 1, \min_i \left\{ 1-(2k-1)\frac{Q_i^k(L_{i,k+1}L_{i,k-1})}{h_i} \right\} \right\}.
\end{equation}
The constant $c>0$ is independent of the mesh $\{ I_i \}$, since $\frac{Q_i^k}{h_i}$ is independent of $h_i$ (\cite{Brass2011QuadratureTT}).

\subsection{Proof of the statement in \cref{rmk:stability-OESV}}\label{sec:proof11}
Without the loss of generality, we assume that $g \equiv 0$ in \eqref{eq:1Dlinear}. First, we observe from \eqref{eq:filter} and \eqref{eq:propipstar_1} that
\begin{equation}\label{eq:05192100}
	\nm{\mathcal{F}_\tau v}_* \leq \nm{v}_*\quad \forall v \in \mathbb{V}^k.
\end{equation}
We will prove the statement in \cref{rmk:stability-OESV} by induction for $\nm{u_h^{n,\ell+1}}_*$.

For the OESV scheme \eqref{eq:linearOESV}, using \eqref{eq:SV-td} gives $u_h^{n,1} = \mathcal{F}_\tau \left( u_h^n + d_{00}\svtd u_h^n \right)$. Because $d_{00} \geq 0$, \cref{prop:SVtd-prop} and \eqref{eq:05192100} yield that
\begin{equation}\label{eq:05192115}
	\begin{aligned}
		\nm{u_h^{n,1}}_*^2 & \leq \ip{u_h^n + d_{00}\svtd u_h^n}{u_h^n + d_{00}\svtd u_h^n}_* \\
		& \leq \left( 1 + d_{00}^2 \nm{\svtd}_*^2 \right)\nm{u_h^{n}}_*^2 - d_{00}\jump{u_h^{n}}^2 \\
		& \leq \left( 1 + C \left( \frac{\tau}{h} \right)^2 \right)\nm{u_h^{n}}_*^2 \leq \left( 1+C_{(0)}\tau \right)\nm{u_h^{n}}_*^2,
	\end{aligned}
\end{equation}
for some $C_{(0)} \geq 0$ if $\frac{\tau}{h^2} \leq C_{\mathrm{CFL}}^* $. Now suppose that $\nm{u_h^{n,\ell+1}}_*^2 \leq \left( 1+C_{(\ell)}\tau \right)\nm{u_h^{n}}_*^2$ for $0\leq \ell< \ell_0$ when $\frac{\tau}{h^2} \leq C_{\mathrm{CFL}}^* $. Because $d_{\ell_0 \kappa}\geq 0$, similar to the derivation of \eqref{eq:05192115}, we can show that if $\frac{\tau}{h^2} \leq C_{\mathrm{CFL}}^* $, then 
\begin{equation}\label{eq:05192210}
	\nm{u_h^{n,\kappa} + \frac{d_{\ell_0 \kappa}}{c_{\ell_0 \kappa}}\svtd u_h^{n,\kappa}}_*^2 \leq \left( 1+C\tau \right)\nm{u_h^{n,\kappa}}_*^2 \leq \left( 1+C\tau \right)\nm{u_h^{n}}_*^2,
\end{equation}
for $0 \leq \kappa \leq \ell_0$. Here, we set $\frac{d_{\ell_0 \kappa}}{c_{\ell_0 \kappa}} = 1$ if $c_{\ell_0 \kappa} = 0$. Notice that \eqref{eq:linearOESV} implies 
\begin{equation*}
	u_h^{n,\ell_0 + 1} = \sum_{0 \leq \kappa \leq \ell_0 } c_{\ell_0 \kappa}\left( u_h^{n,\kappa} + \frac{d_{\ell_0 \kappa}}{c_{\ell_0 \kappa}}\svtd u_h^{n,\kappa} \right).
\end{equation*}
As $c_{\ell_0 \kappa} \geq 0$ and $\sum_\kappa c_{\ell_0 \kappa}=1$,  the convexity of the function $h(x)=x^2$  yields 
\begin{equation}\label{eq:05192220}
	\begin{aligned}
		\nm{u_h^{n,\ell_0 + 1}}_*^2 & \leq \left( \sum_{0 \leq \kappa \leq \ell_0 } c_{\ell_0 \kappa}\nm{u_h^{n,\kappa} + \frac{d_{\ell_0 \kappa}}{c_{\ell_0 \kappa}}\svtd u_h^{n,\kappa}}_* \right)^2 \\
		& \leq \sum_{0 \leq \kappa \leq \ell_0 } c_{\ell_0 \kappa}\nm{u_h^{n,\kappa} + \frac{d_{\ell_0 \kappa}}{c_{\ell_0 \kappa}}\svtd u_h^{n,\kappa}}_*^2 \leq \left( 1+C_{(\ell_0)}\tau \right)\nm{u_h^{n}}_*^2,
	\end{aligned}
\end{equation}
when $\frac{\tau}{h^2} \leq C_{\mathrm{CFL}}^* $. By \eqref{eq:05192115} and \eqref{eq:05192220}, the desired result \eqref{eq:stability-RKSV} for the OESV scheme \eqref{eq:linearOESV} is then verified using mathematical induction.

\subsection{Proof of \cref{prop:OE-1}}
According to \cite{Peng2023OEDGOD},  we introduce the following notations:
\begin{equation*}
	\tilde{\xi}^{n, \ell+1} = \tilde{u}_h^{n,\ell+1} - \mathbf{P} U^{n,\ell+1},\quad \eta^{n, \ell+1} = U^{n,\ell+1} - \mathbf{P} U^{n,\ell+1}
\end{equation*}
where $\mathbf{P}$ denotes the Gauss--Radau projection.  \cite[Section 4.4.2]{Peng2023OEDGOD} has shown that there exists a constant $h_0>0$ independent of $n$ and $\ell$ such that
\begin{equation}\label{eq:05151713}
	\nm{\tilde{\xi}^{n,\ell+1}}_{L^\infty (\Omega)} \leq 2h \leq 2 h_0 ~ \Rightarrow ~ \nm{\cF^{n,\ell+1}} \leq C\left( \nm{\tilde{\xi}^{n,\ell+1}} + h^{k+1} \right).
\end{equation}
By the approximation property of $\mathbf{P}$ and $P^*$, we have $\nm{\eta^{n, \ell+1}} \leq C h^{k+1}$ and $\nm{\eta^{n, \ell+1,*}} \leq C h^{k+1}$. Applying the inverse inequality \eqref{eq:normequiv}, we then obtain
\begin{equation*}
	\begin{aligned}
		\nm{\tilde{\xi}^{n,\ell+1}}_{L^\infty (\Omega)} & \leq \nm{\tilde{\xi}^{n,\kappa,*}}_{L^\infty (\Omega)} + \nm{\eta^{n, \ell+1,*}}_{L^\infty (\Omega)} + \nm{\eta^{n, \ell+1}}_{L^\infty (\Omega)} \\
		& \leq \nm{\tilde{\xi}^{n,\kappa,*}}_{L^\infty (\Omega)} + Ch^{k+\hf},
	\end{aligned}
\end{equation*}
where the constant $C>0$ is independent of $n$ and $\ell$. Since we assume that $k\ge 1>\hf$, there exists $h_*\in (0,h_0]$ such that $Ch^{k+\hf} \leq h$ when $h \leq h_*$. If $\nm{\tilde{\xi}^{n,\ell+1,*}}_{L^\infty (\Omega)} \leq h \leq h_*$, \eqref{eq:05151713} and \eqref{eq:propipstar_2} imply
\begin{equation}
	\begin{aligned}
		\nm{\cF^{n,\ell+1}} & \leq C\left( \nm{\tilde{\xi}^{n,\ell+1,*}} + \nm{\eta^{n, \ell+1,*}} + \nm{\eta^{n, \ell+1}} + h^{k+1} \right) \\
		& \leq C\left( \nm{\tilde{\xi}^{n,\ell+1,*}}_* + h^{k+1} \right).
	\end{aligned}
\end{equation}
The proof is completed.

\subsection{Proof of \cref{prop:xi-tilde}}
By \eqref{eq:linearOESV}, one can verify that $\{ \tilde{\xi}^{n,\mykappa,*} \} $ satisfies the following RKSV scheme:
\begin{equation*}\label{eq:05151725}
	\ip{\tilde{\xi}^{n,\ell+1,*}}{\omega}_* = \sum_{0 \leq \mykappa \leq \ell} \left(c_{\ell \mykappa}\ip{\xi^{n,\mykappa,*}}{\omega}_* + \tau d_{\ell \mykappa}\sv{\xi^{n,\mykappa,*}}{\omega}\right) + \tau\ip{\cZ^{n,\ell+1,*}}{M^* \omega}.
\end{equation*}
If we take $\omega = \tilde{\xi}^{n,\ell+1,*}$, then using \cref{prop:Mstar}, \eqref{eq:propipstar_2}, \eqref{eq:inequalityH}, \eqref{eq:05151721}, and the Cauchy-Schwarz inequality, gives 
\begin{equation}\label{eq:07052024-2}
	\nm{\tilde{\xi}^{n,\ell+1,*}}_*^2 \leq C\nm{\tilde{\xi}^{n,\ell+1,*}}_* \left( C \bigg( \sum_{0 \leq \kappa \leq \ell} \nm{\xi^{n,\kappa,*}}_*  \bigg) + C\tau(h^{k+1}+\tau^r) \right).
\end{equation}
This completes the proof.

\bibliography{refs}
\bibliographystyle{siamplain}

\end{document}